\documentclass[12pt]{amsart}
\usepackage{amsmath}
\usepackage{amsthm}
\usepackage{amsfonts}
\usepackage{amssymb}
\usepackage{tikz-cd}
\usepackage{graphicx}
\usepackage{hyperref}
\usepackage{url}
\usepackage{float}
\usepackage{comment}
\usepackage{lineno}

\newcommand{\meet}{\land}
\newcommand{\join}{\lor}
\newcommand{\zer}{Z_\ell}
\newcommand{\cozer}{\func{coz}_\ell}
\newcommand{\interi}{\func{int}}

\def\bal{\boldsymbol{\mathit{ba}\ell}}
\def\ubal{\boldsymbol{\mathit{uba}\ell}}
\def\cubal{\boldsymbol{\mathit{cuba}\ell}}

\def\mbal{\boldsymbol{\mathit{mba}\ell}}
\def\mubal{\boldsymbol{\mathit{muba}\ell}}

\def\mcubal{\boldsymbol{\mathit{mcuba}\ell}}

\def\bav{\boldsymbol{\mathit{bav}}}
\def\ubav{\boldsymbol{\mathit{ubav}}}
\def\mbav{\boldsymbol{\mathit{mbav}}}
\def\mubav{\boldsymbol{\mathit{mubav}}}

\def\ma{\sf{MA}}

\def\ba{\sf{BA}}

\newcommand\Stone{{\sf Stone}}
\newcommand\KHaus{{\sf KHaus}}

\newcommand\K{\sf{KF}}
\newcommand\DF{\sf{DF}}

\newcommand\KHK{\sf{KHF}}

\newcommand\Clop{{\sf Clop}}
\newcommand\Id{\func{Id}}

\newcommand\FA{\mathfrak{A}}

\newtheorem{theorem}{Theorem}[section]
\newtheorem*{theorem*}{Theorem}
\newtheorem{lemma}[theorem]{Lemma}
\newtheorem*{lemma*}{Lemma}
\newtheorem{proposition}[theorem]{Proposition}
\newtheorem*{proposition*}{Proposition}

\newtheorem*{corollary*}{Corollary}

\theoremstyle{definition}
\newtheorem{definition}[theorem]{Definition}
\newtheorem{convention}[theorem]{Convention}
\newtheorem{notation}[theorem]{Notation}
\newtheorem*{definition*}{Definition}

\newtheorem*{example*}{Example}

\newtheorem{remark}[theorem]{Remark}
\newtheorem*{remark*}{Remark}

\setlength{\topmargin}{-0.5in}
\setlength{\textheight}{9in}
\setlength{\oddsidemargin}{0in}
\setlength{\evensidemargin}{0in}
\setlength{\textwidth}{6.5in}

\newcommand{\func}[1]{\operatorname{#1}}

\newcommand\xnot{\mathpalette\xxnot}
\newcommand*\xxnot[2]{\mathrel{\ooalign{%
  \hfil$#1\not\mathrel{\hphantom=}$\hfil\cr\hfil$#1#2$\hfil\cr}}}

\title{Modal Operators on Rings of Continuous Functions}

\author{G.~Bezhanishvili, L.~Carai, P.~J.~Morandi}
\date{}

\subjclass[2010]{03B45; 54C30; 06F25; 06E25; 06E15}
\keywords{modal algebra, Kripke frame, real-valued function, $\ell$-algebra, compact Hausdorff space, continuous relation}

\begin{document}

\begin{abstract}
It is a classic result in modal logic that the category of modal algebras is dually equivalent to the category of descriptive frames. The latter are Kripke frames equipped with a Stone topology such that the binary relation is continuous. This duality generalizes the celebrated Stone duality. Our goal is to further generalize descriptive frames so that the topology is an arbitrary compact Hausdorff topology. For this, instead of working with the boolean algebra of clopen subsets of a Stone space, we work with the ring of continuous real-valued functions on a compact Hausdorff space. The main novelty is to define a modal operator on such a ring utilizing a continuous relation on a compact Hausdorff space.

Our starting point is the well-known Gelfand duality between the category $\KHaus$ of compact Hausdorff spaces and the category $\ubal$ of uniformly complete bounded archimedean $\ell$-algebras. We endow a bounded archimedean $\ell$-algebra with a modal operator, which results in the category $\mbal$ of modal bounded archimedean $\ell$-algebras. Our main result establishes a
dual adjunction between $\mbal$ and the category $\KHK$ of what we call compact Hausdorff frames; that is, Kripke frames equipped with a compact Hausdorff topology such that the binary relation is continuous. This dual adjunction restricts to a dual equivalence between $\KHK$ and the reflective subcategory $\mubal$ of $\mbal$ consisting of uniformly complete objects of $\mbal$. This generalizes both Gelfand duality and the duality for modal algebras.
\end{abstract}

\maketitle

\section{Introduction} \label{sec: introduction}

In modal logic there is a well-established duality theory between categories of Kripke frames and the corresponding categories of boolean algebras with operators, which forms the backbone of modern studies of modal logic. One of the most fundamental such dualities establishes that the category of modal algebras is dually equivalent to the category of descriptive frames. This duality originates in the works of J\'onsson and Tarski \cite{JT51},
Halmos \cite{Hal56}, and Kripke \cite{Kri63}. In its current form it was developed by Esakia \cite{Esa74} and Goldblatt \cite{Gol76}.
For a modern account we refer to \cite{SV88} or the textbooks \cite{CZ97,Kra99,BRV01}.

This duality generalizes the celebrated Stone duality between the categories of boolean algebras and Stone spaces (zero-dimensional compact Hausdorff spaces). Descriptive frames are Stone spaces equipped with a continuous relation.
It is well known that a binary relation $R$ on a Stone space $X$ is continuous iff the corresponding map from $X$ to the Vietoris space $\mathcal{V}X$, given by sending each $x\in X$ to its $R$-image, is a well-defined continuous map
(see \cite[Sec.~1]{Esa74} or \cite[Sec.~3]{KKV04}).
Since the Vietoris space $\mathcal{V}X$ of a compact Hausdorff space $X$ is compact Hausdorff, the above consideration allows us to generalize the notion of a descriptive frame to what we call a {\em compact Hausdorff frame}; that is, a compact Hausdorff space equipped with a continuous relation. The category $\KHK$ of compact Hausdorff frames was studied in \cite{BBH15} where Isbell \cite{Isb72} and de Vries \cite{deV62} dualities for the category $\KHaus$ of compact Hausdorff spaces were generalized to $\KHK$.

One of the best known (and oldest) dualities for $\KHaus$ is Gelfand duality, which establishes that $\KHaus$ is dually equivalent to the category $\ubal$ of uniformly complete bounded archimedean $\ell$-algebras (see Section~\ref{sec: Gelfand duality} for details). This duality is obtained by associating to each compact Hausdorff space $X$ the ring $C(X)$ of continuous real-valued functions on $X$.
For some time now there has been a desire to generalize Gelfand duality to a duality for $\KHK$, but it remained elusive for at least two reasons. On the conceptual side, there was no agreement on what should be the definition of modal operators on the ring $C(X)$. On the technical side, it was unclear how to axiomatize attempted definitions of modal operators.

The goal of this paper is to resolve these issues. After recalling Gelfand duality, we define a modal operator on the ring $C(X)$ for each compact Hausdorff frame $(X,R)$, and study its basic properties. This motivates the definition of a modal operator on an arbitrary bounded archimedean $\ell$-algebra, which is the main definition of the paper, giving rise to the category $\mbal$ of modal bounded archimedean $\ell$-algebras. We show that there is a contravariant functor $(-)^*$ from $\KHK$ to $\mbal$.

Next we define a contravariant functor $(-)_*:\mbal\to\KHK$ in the opposite
direction. Proving that $(-)_*:\mbal\to\KHK$ is well defined is technically the most challenging part of the paper. Our main result establishes that the contravariant functors $(-)^*$ and $(-)_*$ yield a dual adjunction between $\mbal$ and $\KHK$, which restricts to a dual equivalence between $\KHK$ and the reflective subcategory $\mubal$ of $\mbal$ consisting of uniformly complete objects of $\mbal$.

Our result generalizes both Gelfand duality
and the duality between modal algebras and descriptive frames. We also take first steps in developing correspondence theory for $\mbal$ by characterizing the classes of algebras in $\mbal$ such that the corresponding relations on the dual side are serial, reflexive, transitive, or symmetric. We conclude the paper outlining several possible future directions of this line of research.

\section{Gelfand duality}\label{sec: Gelfand duality}

Gelfand duality has a long history. In \cite{GN43}, by working with continuous complex-valued functions, Gelfand and Naimark established that $\KHaus$ is dually equivalent to the category of commutative $C^*$-algebras. Independently, Stone \cite{Sto40} worked with continuous real-valued functions and established that $\KHaus$ is dually equivalent to the category of uniformly complete bounded archimedean $\ell$-algebras. The two dualities are closely related as the two categories of algebras are equivalent, which can be seen directly without passing to $\KHaus$. Indeed, the self-adjoint elements of a commutative $C^*$-algebra form an algebra that Stone worked with, and each such algebra $A$ gives rise to a commutative $C^*$-algebra by taking the complexification $A\otimes_\mathbb R\mathbb C$ (see
\cite[Sec.~7]{BMO13a} for details). Because of this, these two dualities are sometimes called by the unifying name of Gelfand-Naimark-Stone duality. We follow \cite[Sec.~IV.4]{Joh82} in calling this Gelfand duality, although our approach is more closely related to Stone's.

We start by recalling several basic definitions (see \cite[Ch.~XIII and onwards]{Bir79} or \cite{BMO13a}). All rings that we will consider in this paper are commutative and unital (have multiplicative identity $1$).

\begin{definition}
\begin{enumerate}
\item[]
\item A ring $A$ with a partial order $\le$ is an \emph{$\ell$-ring} (that is, a \emph{lattice-ordered ring}) if $(A,\le)$ is a lattice, $a\le b$ implies $a+c \le b+c$ for each $c$, and $0 \leq a, b$ implies $0 \le ab$.
\item An $\ell$-ring $A$ is \emph{bounded} if for each $a \in A$ there is $n \in \mathbb{N}$ such that $a \le n\cdot 1$ (that is, $1$ is
a \emph{strong order unit}).
\item An $\ell$-ring $A$ is \emph{archimedean} if for each $a,b \in A$, whenever $n\cdot a \le b$ for each $n \in \mathbb{N}$, then $a \le 0$.
\item An $\ell$-ring $A$ is an \emph{$\ell$-algebra} if it is an $\mathbb R$-algebra and for each $0 \le a\in A$ and $0\le \lambda\in\mathbb R$ we
have $0 \le \lambda \cdot a$.
\item Let $\bal$ be the category of bounded archimedean $\ell$-algebras and unital $\ell$-algebra homomorphisms.
\end{enumerate}
\end{definition}

Let $A\in\bal$. For $a\in A$, define the \emph{absolute value} of $a$ by
\[
|a|=a\vee(-a)
\]
and the \emph{norm} of $a$ by
\[
||a||=\inf\{\lambda\in\mathbb R \mid |a|\le \lambda\}.\footnote{We view $\mathbb{R}$ as an $\ell$-subalgebra of $A$ by identifying
$\lambda\in\mathbb R$ with $\lambda\cdot 1\in A$.}
\]
Then $A$ is \emph{uniformly complete} if the norm is complete. Let $\ubal$ be the full subcategory of $\bal$ consisting of uniformly complete $\ell$-algebras.

\begin{theorem} [Gelfand duality \cite{GN43,Sto40}]
There is a dual adjunction between $\bal$ and $\KHaus$ which restricts to a dual equivalence between $\KHaus$ and $\ubal$.
\[
\begin{tikzcd}
\ubal \arrow[rr, hookrightarrow] && \bal \arrow[dl, "(-)_*"]  \arrow[ll, bend right = 20] \\
&  \KHaus \arrow[ul,  "(-)^*"] &
\end{tikzcd}
\]
\end{theorem}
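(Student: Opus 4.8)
The plan is to establish the two functors and then verify the adjunction and its restriction to an equivalence.

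\medskip

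First I would construct the functor $(-)^*\colon\KHaus\to\bal$. Given a compact Hausdorff space $X$, I would form the ring $C(X)$ of continuous real-valued functions on $X$, with the pointwise operations and the pointwise order. It is routine to check that $C(X)$ is an $\ell$-algebra: the lattice operations are $(f\vee g)(x)=\max\{f(x),g(x)\}$ and $(f\wedge g)(x)=\min\{f(x),g(x)\}$, which again yield continuous functions. It is bounded because each continuous function on a compact space is bounded, so the constant function $n$ dominates $f$ for suitable $n$; it is archimedean because the usual archimedean property of $\mathbb R$ transfers pointwise; and the norm coincides with the sup-norm $\|f\|=\sup_{x\in X}|f(x)|$. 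Since $C(X)$ is complete in the sup-norm, $C(X)$ in fact lands in $\ubal$. On morphisms, a continuous map $\varphi\colon X\to Y$ induces the composition map $f\mapsto f\circ\varphi$, which is a unital $\ell$-algebra homomorphism $C(Y)\to C(X)$; functoriality (contravariance) is immediate.

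\medskip

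Next I would construct the functor $(-)_*\colon\bal\to\KHaus$. For $A\in\bal$, I would take $A_*$ to be the space of $\ell$-algebra homomorphisms (equivalently, the maximal $\ell$-ideals) from $A$ into $\mathbb R$, topologized as a subspace of $\mathbb R^A$ with the product topology; equivalently, this is the Gelfand spectrum carried by the strong order unit. Using boundedness one shows each such homomorphism is determined by a point of a compact box $\prod_{a}[-\|a\|,\|a\|]$, and that the set of homomorphisms is closed therein, so $A_*$ is compact Hausdorff. A homomorphism $A\to B$ in $\bal$ dually induces a continuous map $B_*\to A_*$ by precomposition. The key technical facts are that evaluation gives a natural homomorphism $\eta_A\colon A\to C(A_*)$, $a\mapsto\hat a$ where $\hat a(\mu)=\mu(a)$, and that for a space $X$ the evaluation map $\varepsilon_X\colon X\to C(X)_*$ sending $x$ to the point-evaluation $f\mapsto f(x)$ is a homeomorphism (this last uses the Urysohn lemma to separate points of $X$ by continuous functions, giving injectivity and the open-map property).

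\medskip

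I would then assemble these into the dual adjunction by checking that $\eta$ and $\varepsilon$ are natural transformations and satisfy the two triangle identities, so that $(-)^*$ is right adjoint to $(-)_*$ in the contravariant sense. The heart of the matter, and the step I expect to be the main obstacle, is showing that $\eta_A\colon A\to C(A_*)$ is an isomorphism exactly when $A$ is uniformly complete. Injectivity of $\eta_A$ requires that the maximal $\ell$-ideals separate points of $A$, which uses the archimedean hypothesis together with a Krull-style argument that each proper $\ell$-ideal extends to a maximal one whose quotient is $\mathbb R$ (this is where the bounded archimedean structure is essential, as it forces the residue field to be $\mathbb R$ rather than a larger ordered field). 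Surjectivity onto $C(A_*)$ is the genuinely hard part: the image $\widehat A=\{\hat a\mid a\in A\}$ is a unital $\ell$-subalgebra of $C(A_*)$ that separates points, so by the Stone--Weierstrass theorem it is dense in the sup-norm; since $\eta_A$ is a norm isometry, $\widehat A$ is complete precisely when $A$ is uniformly complete, and a dense complete subalgebra must be all of $C(A_*)$. Thus $\eta_A$ is an isomorphism iff $A\in\ubal$, while $\varepsilon_X$ is always a homeomorphism; this yields the desired dual equivalence between $\KHaus$ and $\ubal$, with the adjunction on all of $\bal$ obtained by restricting codomains appropriately.
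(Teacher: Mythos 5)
Your overall route is the standard one and matches the paper's outline of this classical result: the functor $X \mapsto C(X)$, the spectrum of maximal $\ell$-ideals (your space of $\mathbb{R}$-valued $\ell$-algebra homomorphisms with the weak topology is the same space as the paper's Yosida space $Y_A$ with the hull-kernel topology), the evaluation maps as units, and Stone--Weierstrass plus an isometry argument to identify $C(Y_A)$ with the uniform completion of $A$. That part of your plan is sound and is exactly how the paper (following Stone) sets things up.

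There is, however, one genuine gap: your justification that $\varepsilon_X \colon X \to C(X)_*$ is a homeomorphism. You invoke Urysohn's lemma for ``injectivity and the open-map property,'' but you never address \emph{surjectivity}, and surjectivity is precisely the point where compactness of $X$ does its work; it does not follow from the properties you list. What must be shown is that every $\ell$-algebra homomorphism $C(X) \to \mathbb{R}$ (equivalently, every maximal $\ell$-ideal $M$ of $C(X)$) is evaluation at some point of $X$. The standard argument: the family of zero sets $\{ Z(f) \mid f \in M \}$ has the finite intersection property, since if $Z(f_1) \cap \dots \cap Z(f_n) = \varnothing$ then $g = |f_1| + \dots + |f_n| \in M$ is bounded away from $0$ on $X$, hence invertible, contradicting properness of $M$; by compactness there is $x \in \bigcap_{f \in M} Z(f)$, so $M \subseteq \varepsilon_X(x)$ and maximality forces equality. (Once surjectivity and injectivity are known, openness is automatic: a continuous bijection from a compact space to a Hausdorff space is a homeomorphism.) This step cannot be waved away: for noncompact completely regular $X$, the analogous evaluation map into the spectrum of the bounded continuous functions is injective and an embedding yet badly non-surjective (the spectrum is $\beta X$), so the dual equivalence would simply fail without it. Alternatively, surjectivity can be deduced from the triangle identity together with the fact that $\zeta_{C(X)}$ is an isomorphism (if $\varepsilon_X$ missed a point, Urysohn would give a nonzero function killed by $(\varepsilon_X)^*$, contradicting that $(\varepsilon_X)^*$ is an isomorphism), but some such argument has to be supplied.
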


The functors $(-)^*:\KHaus \to \bal$ and $(-)_*:\bal\to\KHaus$ establishing the dual adjunction are defined as follows. For a compact Hausdorff space $X$ let $X^*$ be the ring $C(X)$ of (necessarily bounded) continuous real-valued functions on $X$. For a continuous map $\varphi:X\to Y$ let $\varphi^*:C(Y)\to C(X)$ be defined by $\varphi^*(f)=f\circ\varphi$ for each $f\in C(Y)$. Then $(-)^*:\KHaus\to\bal$ is a well-defined contravariant functor.

For $A\in\bal$, we recall that an ideal $I$ of $A$ is an \emph{$\ell$-ideal} if $|a|\le|b|$ and $b\in I$ imply $a\in I$, and that $\ell$-ideals are exactly the kernels of $\ell$-algebra homomorphisms. Let $Y_A$ be the space of maximal $\ell$-ideals of $A$, whose closed sets are exactly sets of the form
\[
Z_\ell(I) = \{M\in Y_A\mid I\subseteq M\},
\]
where $I$ is an $\ell$-ideal of $A$. The space $Y_A$ is often referred to as the \emph{Yosida space} of $A$, and it is well known that $Y_A\in\KHaus$. We then set $A_*=Y_A$. For a morphism $\alpha$ in $\bal$ let $\alpha_*=\alpha^{-1}$. Then $(-)_*:\bal\to\KHaus$ is a well-defined contravariant functor, and the functors $(-)_*$ and $(-)^*$ yield a dual adjunction between $\bal$ and $\KHaus$.

Moreover, for $X\in\KHaus$ we have that $\varepsilon_X:X\to (X^*)_*$
is a homeomorphism where
\[
\varepsilon_X(x)=\{f\in C(X) \mid f(x)=0\}.
\]
Furthermore, for $A\in\bal$ define $\zeta_A :A\to (A_*)^*$ by $\zeta_A(a)(M)=\lambda$ where $\lambda$ is the unique real number satisfying $a+M=\lambda+M$. Then $\zeta_A$ is a monomorphism in $\bal$ separating points of $Y_A$. Therefore, by the Stone-Weierstrass theorem, we have:

\begin{proposition}\label{prop: SW}
\begin{enumerate}
\item[]
\item The uniform completion of $A \in \bal$ is $\zeta_A : A \to C(Y_A)$. Therefore, if $A$ is uniformly complete, then $\zeta_A$ is an isomorphism.
\item $\ubal$ is a reflective subcategory of $\bal$, and the reflector $\zeta : \bal \to \ubal$ assigns to each $A \in \bal$ its uniform completion $C(Y_A) \in \ubal$.
\end{enumerate}
\end{proposition}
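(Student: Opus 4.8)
The plan is to derive both parts from the Stone-Weierstrass theorem together with the standard theory of completions of normed algebras, using the facts recorded just before the statement: $\zeta_A$ is a monomorphism in $\bal$ whose image separates the points of $Y_A$. The remaining content of part~(1) is to recognize $\zeta_A$ as a \emph{dense isometric} embedding into a complete $\ell$-algebra.

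First I would note that $C(Y_A)$ is uniformly complete, since by Gelfand duality $C(Y_A) = (Y_A)^* \in \ubal$ (concretely, a uniform limit of continuous functions is continuous). Next I would verify that $\zeta_A$ is norm-preserving. Because an injective $\ell$-algebra homomorphism is automatically an order embedding --- from $\zeta_A(a) \le \zeta_A(b)$ one gets $\zeta_A(a \vee b) = \zeta_A(b)$, hence $a \vee b = b$ by injectivity --- the inequality $|a| \le \lambda$ holds in $A$ iff $\zeta_A(|a|) = |\zeta_A(a)| \le \lambda$ holds in $C(Y_A)$, i.e.\ iff $|\zeta_A(a)(M)| \le \lambda$ for every $M \in Y_A$; taking the infimum over such $\lambda$ yields $\|a\| = \|\zeta_A(a)\|_\infty$. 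Since $\zeta_A(A)$ is then a unital $\ell$-subalgebra of $C(Y_A)$ that contains the constants and separates points, Stone-Weierstrass gives that $\zeta_A(A)$ is uniformly dense in $C(Y_A)$. A dense isometric embedding into a complete algebra is precisely a uniform completion, which proves the first assertion; and if $A$ is already uniformly complete, then $\zeta_A(A)$ is complete, hence closed in $C(Y_A)$, and being also dense it must equal $C(Y_A)$, so that $\zeta_A$ is an isomorphism.

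For part~(2) I would establish the universal property of the reflection. The key preliminary is that every morphism $f : A \to B$ of $\bal$ is contractive: if $|a| \le \lambda$ then $|f(a)| = f(|a|) \le \lambda$, so $\|f(a)\| \le \|a\|$ and $f$ is uniformly continuous. Given $B \in \ubal$ and such an $f$, the completeness of $B$ and the density of $\zeta_A(A)$ in $C(Y_A)$ produce a unique continuous map $\bar f : C(Y_A) \to B$ with $\bar f \circ \zeta_A = f$, uniqueness being immediate from density. The main obstacle is then to confirm that $\bar f$ is a morphism of $\bal$, i.e.\ that extension by continuity preserves the entire $\ell$-algebra structure: preservation of the ring operations and the unit is routine once one knows that addition and multiplication are norm-continuous (multiplication on norm-bounded sets, which suffices since every element has finite norm), while preservation of $\vee$ and $\wedge$ relies on the uniform continuity of the lattice operations, for instance $\|(a \vee b) - (a' \vee b')\| \le \|a - a'\| + \|b - b'\|$. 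Each homomorphism identity then holds on the dense subalgebra $\zeta_A(A)$ and extends by continuity to all of $C(Y_A)$, showing that $\zeta_A$ is the unit of a reflection of $\bal$ onto $\ubal$ with reflector $\zeta$ sending $A$ to $C(Y_A)$.
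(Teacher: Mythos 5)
Your proof is correct and follows essentially the same route as the paper, which presents this proposition as a direct consequence of the Stone--Weierstrass theorem applied to the point-separating monomorphism $\zeta_A : A \to C(Y_A)$ (citing \cite{BMO13a} for background) and gives no further details. Your write-up simply makes explicit the steps the paper leaves implicit: that $\zeta_A$ is an isometric order embedding with dense image, and that the universal property of the reflection follows by extending $\bal$-morphisms (which are contractive) by uniform continuity.
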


Consequently, the dual adjunction restricts to a dual equivalence between $\ubal$ and $\KHaus$, yielding Gelfand duality.

\section{Modal operators on $C(X)$} \label{sec: from frames to rings}

In this section we define modal operators on rings of continuous real-valued functions on compact Hausdorff frames and study their basic properties.
This motivates the definition of a modal operator on $A\in\bal$, giving rise to the category $\mbal$ of modal bounded archimedean $\ell$-algebras. We end the section by describing a
contravariant functor from $\KHK$ to $\mbal$.

We recall that a \emph{Kripke frame} is a pair $\mathfrak F=(X,R)$ where $X$ is a set and $R$ is a binary relation on $X$. As usual, for $x\in X$
we write
\[
R[x]=\{y\in X \mid xRy\} \quad\mbox{and} \quad R^{-1}[x]=\{y\in X \mid yRx\},
\]
and for $U\subseteq X$ we write
\[
R[U]=\bigcup\{R[u] \mid u\in U\} \quad\mbox{and}\quad R^{-1}[U]=\bigcup\{R^{-1}[u] \mid u\in U\}.
\]

\begin{definition} \cite{BBH15}
A binary relation $R$ on a compact Hausdorff space $X$ is \emph{continuous} if:
\begin{enumerate}
\item $R[x]$ is closed for each $x\in X$.
\item $F\subseteq X$ closed implies $R^{-1}[F]$ is closed.
\item $U\subseteq X$ open implies $R^{-1}[U]$ is open.
\end{enumerate}
 If $R$ is a continuous relation on $X$, we call $(X, R)$ a \emph{compact Hausdorff frame}.
\label{def:cont for KHaus}
\end{definition}

\begin{notation} \label{not: D and E}
For a binary relation $R$ on a set $X$ let
\begin{align*}
D &= \{ x \in X \mid R[x] \ne \varnothing \} = R^{-1}[X],\\
E &= X \setminus D = \{ x \in X \mid R[x] = \varnothing\}.
\end{align*}
\end{notation}

The next lemma is straightforward and we omit the proof.

\begin{lemma} \label{lem: clopen}
If $(X,R)$ is a compact Hausdorff frame, then $D$ and $E$ are clopen subsets of $X$.
\end{lemma}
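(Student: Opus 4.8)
The plan is to reduce everything to the single observation that the whole space $X$ is clopen and that $D$ coincides with $R^{-1}[X]$. First I would recall from Notation~\ref{not: D and E} that $D = R^{-1}[X]$ and that $E = X \setminus D$. Since $E$ is by definition the complement of $D$, it suffices to prove that $D$ is clopen; the statement for $E$ then follows immediately, as the complement of a clopen subset of $X$ is again clopen.

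To show that $D = R^{-1}[X]$ is closed, I would apply clause~(2) of the definition of a continuous relation (Definition~\ref{def:cont for KHaus}) to the closed set $F = X$: since $X$ is trivially a closed subset of itself, $R^{-1}[X]$ is closed. To show that $D$ is open, I would apply clause~(3) to the open set $U = X$: since $X$ is trivially open, $R^{-1}[X]$ is open. Combining these two observations shows that $D$ is clopen, and hence so is $E$.

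There is essentially no obstacle here; the only point worth isolating is that $X$, being simultaneously open and closed, can be fed into both clauses~(2) and~(3) of the continuity definition, and that $D$ is precisely the $R^{-1}$-preimage of $X$. This is exactly why the authors describe the lemma as straightforward.
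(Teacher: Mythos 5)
Your proof is correct: the paper omits the proof as straightforward, and your argument---that $D = R^{-1}[X]$ is closed by clause~(2) and open by clause~(3) of Definition~\ref{def:cont for KHaus} applied to the clopen set $X$, with $E$ clopen as its complement---is precisely the intended one.
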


\begin{definition}\label{def: Box_R}
For a compact Hausdorff frame $(X,R)$, define $\Box_R$ on $C(X)$ by
\[
(\Box_Rf)(x) = \left\{\begin{array}{ll}\inf fR[x] & \textrm{if }x \in D\\ 1 & \textrm{otherwise.}\end{array}\right.
\]
\end{definition}

\begin{remark} \label{rem: sup}
We define $\Diamond_R$ by
\[
(\Diamond_R f)(x) = \left\{\begin{array}{ll}\sup fR[x] & \textrm{if }x \in D\\ 0 & \textrm{otherwise.}\end{array}\right.
\]
We have
\[
\Diamond_R f = 1 -\Box_R (1-f) \quad\mbox{and}\quad \Box_Rf = 1 -\Diamond_R(1-f).
\]
For, if $x \in D$, then
\begin{align*}
1 - \Box_R(1-f)(x) &= 1 -\inf\{ 1-f(y) \mid xRy\} = 1 -(1 - \sup\{ f(y)  \mid xRy\}) \\
&= \sup \{ f(y) \mid xRy\} = \Diamond_Rf(x).
\end{align*}
If $x \in E$, then $(1 - \Box_R (1-f))(x) = 1 - 1 = 0 = (\Diamond_Rf)(x)$. Thus, $\Diamond_R f = 1 - \Box_R(1-f)$, as desired.
A similar argument yields $\Box_Rf = 1 - \Diamond_R(1-f)$. Therefore, each of $\Box_R$ and $\Diamond_R$ can be determined from the other.
\end{remark}

\begin{remark}
Let $(X,R)$ be a compact Hausdorff frame, $f \in C(X)$, and $x \in X$ with $R[x] \ne \varnothing$. Then $fR[x]$ is a nonempty compact subset of $\mathbb{R}$, and so it has least and greatest elements. Thus, we have
\[
(\Box_Rf)(x) = \min fR[x] \quad\mbox{and}\quad (\Diamond_R f)(x) = \max fR[x].
\]
\end{remark}

\begin{lemma} \label{lem: Box on C}
Let $(X,R)$ be a compact Hausdorff frame. If $f \in C(X)$, then $\Box_R f \in C(X)$.
\end{lemma}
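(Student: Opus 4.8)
The plan is to test continuity on the subbasis $\{(-\infty,a),(a,\infty)\mid a\in\mathbb R\}$ of $\mathbb R$, showing that the preimages under $\Box_R f$ of these subbasic open sets are open in $X$. Since $D$ and $E$ are clopen by Lemma~\ref{lem: clopen} and $\Box_R f$ is the constant $1$ on $E$, it suffices to establish openness of these preimages intersected with $D$; the contribution from $E$ is merely the constant value $1$, which is continuous on the clopen piece $E$. Throughout I would use that for $x\in D$ we have $(\Box_R f)(x)=\min fR[x]$, the minimum being attained because $R[x]$ is closed (continuity property~(1)), hence compact, so $fR[x]$ is a compact subset of $\mathbb R$.

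First I would treat the set where $\Box_R f$ is small. For $x\in D$, $(\Box_R f)(x)<a$ holds iff some $y\in R[x]$ satisfies $f(y)<a$, i.e.\ iff $R[x]\cap f^{-1}((-\infty,a))\neq\varnothing$, i.e.\ iff $x\in R^{-1}[f^{-1}((-\infty,a))]$. As $f$ is continuous, $f^{-1}((-\infty,a))$ is open, and continuity property~(3) of $R$ makes $R^{-1}[f^{-1}((-\infty,a))]$ open. Every point of this set automatically lies in $D$ (its $R$-image is nonempty), so it equals $\{x\in D\mid(\Box_R f)(x)<a\}$, which is therefore open.

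Next I would treat the set where $\Box_R f$ is large. For $x\in D$, $(\Box_R f)(x)>a$ holds iff \emph{every} $y\in R[x]$ satisfies $f(y)>a$, i.e.\ iff $R[x]\cap f^{-1}((-\infty,a])=\varnothing$, i.e.\ iff $x\notin R^{-1}[f^{-1}((-\infty,a])]$. The set $f^{-1}((-\infty,a])$ is closed, so continuity property~(2) of $R$ makes $R^{-1}[f^{-1}((-\infty,a])]$ closed, whence $\{x\in D\mid(\Box_R f)(x)>a\}=D\setminus R^{-1}[f^{-1}((-\infty,a])]$ is open in $D$, and thus open in $X$.

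Combining the two cases shows every subbasic preimage is open, so $\Box_R f$ is continuous on $D$; together with constancy on the clopen set $E$, this yields $\Box_R f\in C(X)$. The point requiring the most care is the ``large'' case: the equivalence $(\Box_R f)(x)>a\Leftrightarrow R[x]\cap f^{-1}((-\infty,a])=\varnothing$ depends on the infimum being attained as a minimum (guaranteed by property~(1) via compactness of $R[x]$), and one must use the \emph{closed} ray $(-\infty,a]$ rather than the open one so that property~(2) applies. Were the infimum not attained, this step would break down, so this is the crux where the continuity hypotheses on $R$ are genuinely used.
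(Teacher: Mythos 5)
Your proof is correct and takes essentially the same approach as the paper: test the subbasic rays, use continuity property (3) (with the open ray) for the set where $\Box_R f < a$, property (2) (with the closed ray $(-\infty,a]$, relying on the infimum being attained as a minimum via property (1)) for the set where $\Box_R f > a$, and the clopenness of $D$ and $E$. The only difference is presentational: you glue the continuous restrictions on the clopen pieces $D$ and $E$, whereas the paper writes out each full preimage in $X$ explicitly via a case split on $\lambda$ versus $1$.
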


\begin{proof}
To see that $\Box_R f$ is continuous, it is sufficient to show that for each $\lambda \in \mathbb{R}$, both $(\Box_R f)^{-1}(\lambda, \infty)$ and
$(\Box_R f)^{-1}(-\infty, \lambda)$ are open in $X$. We first show that $(\Box_R f)^{-1}(\lambda, \infty)$ is open.
Let $x \in X$ and first suppose that $x \in D$. Then $fR[x]$ is a nonempty compact subset of $\mathbb{R}$, so it has a least element. Therefore,
\begin{eqnarray*}
x \in (\Box_R f)^{-1}(\lambda, \infty) & \mbox{ iff } & (\Box_R f)(x) > \lambda \\
& \mbox{ iff } & \min(fR[x]) > \lambda \\
& \mbox{ iff } & R[x] \subseteq f^{-1}(\lambda, \infty) \\
& \mbox{ iff } & x \in  X \setminus R^{-1}[X \setminus f^{-1}(\lambda, \infty)].
\end{eqnarray*}
Next suppose that $x \in E$. Then $(\Box_R f)(x) = 1$. Thus, $E \subseteq (\Box_R f)^{-1}(\lambda, \infty)$ if $\lambda < 1$, and
$E \cap (\Box_R f)^{-1}(\lambda, \infty) = \varnothing$ otherwise. Consequently,
\[
(\Box_R f)^{-1}(\lambda, \infty) = \left[D \cap (X \setminus R^{-1}[X \setminus f^{-1}(\lambda, \infty)]) \right]\cup E
\]
if $\lambda < 1$, and
\[
(\Box_R f)^{-1}(\lambda, \infty) = D \cap (X \setminus R^{-1}[X \setminus f^{-1}(\lambda, \infty)])
\]
if $1 \le \lambda$. Since $f\in C(X)$ and
$R$ is continuous, $X \setminus R^{-1}[X \setminus f^{-1}(\lambda, \infty)]$ is open. Thus, $(\Box_R f)^{-1}(\lambda, \infty)$ is open
by Lemma~\ref{lem: clopen}.

We next show that $(\Box_R f)^{-1}(-\infty, \lambda)$ is open.
If $x \in D$, then
\begin{eqnarray*}
x \in (\Box_R f)^{-1}(-\infty, \lambda) & \mbox{  iff } & (\Box_R f)(x) < \lambda \\
& \mbox{ iff } & \min(fR[x]) < \lambda \\
& \mbox{ iff } & R[x] \cap f^{-1}(-\infty, \lambda) \ne \varnothing \\
& \mbox{ iff } & x \in R^{-1}[f^{-1}(-\infty, \lambda)].
\end{eqnarray*}
If $\lambda \le 1$, then $E \cap (\Box_R f)^{-1}(-\infty, \lambda) = \varnothing$, and if $1 < \lambda$, then
$E \subseteq (\Box_R f)^{-1}(-\infty, \lambda)$. Therefore,
\[
(\Box_R f)^{-1}(-\infty, \lambda) = D \cap R^{-1}[f^{-1}(-\infty, \lambda)]
\]
if $\lambda \le 1$,
and
\[
(\Box_R f)^{-1}(-\infty, \lambda) =\left[D \cap (R^{-1}[f^{-1}(-\infty, \lambda)])\right] \cup E
\]
if $\lambda > 1$. Since $f\in C(X)$ and $R$ is continuous,
$R^{-1}[f^{-1}(-\infty, \lambda)]$ is open. Consequently, $(\Box_R f)^{-1}(-\infty, \lambda)$ is open
by Lemma~\ref{lem: clopen}.
This completes the proof that if $f \in C(X)$, then $\Box_R f \in C(X)$.
\end{proof}

In the next lemma we describe the properties of $\Box_R$. For this we recall (see, e.g., \cite[Rem~2.2]{BMO13a}) that if $A\in\bal$ and $a\in A$, then the \emph{positive} and \emph{negative} parts of $a$ are defined as
\[
a^+=a\vee 0 \quad \mbox{and}\quad a^-=-(a\wedge 0)=(-a)\vee 0.
\]
Then $a^+,a^- \ge 0$, $a^+ \wedge a^-=0$, $a=a^+ - a^-$, and $|a|=a^+ + a^-$.

\begin{lemma} \label{lem: properties of box}
Let $(X,R)$ be a compact Hausdorff frame, $f,g \in C(X)$, and $\lambda \in \mathbb{R}$.
\begin{enumerate}
\item $\Box_R(f\wedge g) = \Box_R f \wedge \Box_R g$. In particular, $\Box_R$ is order preserving.
\item $\Box_R \lambda = \lambda + (1-\lambda)(\Box_R 0)$. In particular, $\Box_R 1 = 1$.
\item $\Box_R (f^+) = (\Box_R f)^+$.
\item $\Box_R(f + \lambda) = \Box_R f + \Box_R\lambda - \Box_R 0$.
\item If $0 \le \lambda$, then $\Box_R (\lambda f) = (\Box_R\lambda)( \Box_R f)$.
\end{enumerate}
\end{lemma}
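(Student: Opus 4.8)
The plan is to prove all five identities pointwise. Each is an equality between elements of $C(X)$, and since $\Box_R$ is defined valuewise --- by $(\Box_R h)(x)=\min hR[x]$ when $x\in D$ and $(\Box_R h)(x)=1$ when $x\in E$ --- it suffices to check that the two sides agree at every $x\in X$, splitting into the cases $x\in D$ and $x\in E$. Throughout I would use that for $x\in D$ the set $hR[x]$ is a nonempty compact subset of $\mathbb{R}$, so that $\min hR[x]$ exists, as already observed.

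On $E$ every identity collapses to a trivial arithmetic statement about the constant value $1$: for (1) it reads $1=1\wedge 1$; for (2), $1=\lambda+(1-\lambda)\cdot 1$; for (3), $1=\max\{1,0\}$; for (4), $1=1+1-1$; and for (5), $1=1\cdot 1$. So the content lies entirely in the case $x\in D$. There, writing $m_h=\min hR[x]$, parts (2), (4), and (5) follow immediately from elementary properties of the minimum over a nonempty compact set: the minimum of a constant function $\lambda$ is $\lambda$ (so $\Box_R\lambda=\lambda$ and $\Box_R 0=0$ on $D$, which gives (2) and the reduction in (4)); the minimum is translation-equivariant, yielding (4); and the minimum is positively homogeneous, $\min(\lambda f)R[x]=\lambda\,m_f$ for $\lambda\ge 0$, which gives (5). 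The hypothesis $\lambda\ge 0$ is exactly what homogeneity requires, since a negative scalar would turn the minimum into a maximum.

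The only computations requiring genuine (if still elementary) work are (1) and (3) on $D$, which rest on the two interchange identities $\min_{y\in R[x]}\min\{f(y),g(y)\}=\min\{m_f,m_g\}$ and $\min_{y\in R[x]}\max\{f(y),0\}=\max\{m_f,0\}$. Each I would dispatch by a short two-case argument: for the first, compare $m_f$ and $m_g$ and observe that the pointwise minimum is bounded below by the smaller of the two and attains it; for the second, split on the sign of $m_f$, using that $\max\{f(y),0\}\ge 0$ always and equals $0$ at any point where $f$ is negative. Finally, the order-preserving assertion in (1) follows formally, since $f\le g$ is equivalent to $f\wedge g=f$, whence $\Box_R f=\Box_R(f\wedge g)=\Box_R f\wedge\Box_R g\le\Box_R g$. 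I do not expect a serious obstacle here; the two min-interchange lemmas are the crux, and the main point of care is simply to keep the special constant value $1$ on $E$ separate from the min-formula on $D$.
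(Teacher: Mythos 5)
Your proposal is correct and follows essentially the same route as the paper: a pointwise verification splitting into the cases $x\in D$ and $x\in E$, with the $E$-case reducing to trivial arithmetic with the constant $1$ and the $D$-case resting on elementary properties of the minimum over the nonempty compact set $fR[x]$ (constancy, translation-equivariance, positive homogeneity, and the two min--min and min--max interchange identities, which the paper asserts within its chains of equalities and you merely spell out). The order-preservation argument via $f\le g \Leftrightarrow f\wedge g = f$ is also the standard one the paper uses in its abstract analogue.
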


\begin{proof}
(1). For $x \in D$, we have
\begin{align*}
\Box_R (f \wedge g )(x) &= \inf \{ (f \wedge g)(y) \mid  y \in R[x] \} = \inf \{ \min \{ f(y), g(y) \} \mid y \in R[x] \} \\
&= \min \{  \inf \{ f(y) \mid y \in R[x] \},  \inf \{ g(y) \mid y \in R[x] \} \}\\
& = \min \{ (\Box_R f)(x), (\Box_R g)(x)  \} \\
&= (\Box_R f \wedge \Box_R g)(x).
\end{align*}
If $x \in E$, then $\Box_R (f \wedge g)(x) = 1 = (\Box_R f \wedge \Box_R g)(x)$.
Thus, $\Box_R(f\wedge g) = \Box_R f \wedge \Box_R g$.

(2). For $x \in D$, if $\mu \in \mathbb{R}$, we have $(\Box_R \mu)(x) = \inf \{ \mu \mid y \in R[x] \} = \mu$. From this we see that
$(\Box_R \lambda)(x) = \lambda =  (\lambda + (1-\lambda)(\Box_R 0))(x)$. If $x \in E$, then
$(\Box_R \lambda)(x) = 1 = (\lambda + (1-\lambda)(\Box_R 0))(x)$. Thus, $\Box_R \lambda = \lambda =  \lambda + (1-\lambda)(\Box_R 0)$.
Setting $\lambda=1$ yields $\Box_R 1 = 1$.

(3). For $x \in D$, we have
\begin{align*}
(\Box_R (f^+))(x) &= \Box_R (f \join 0)(x) =\inf \{ \max\{f(y),0 \} \mid y \in R[x] \} \\
&=\max\{\inf \{ f(y) \mid y \in R[x] \},0\} = \max \{ \Box_R f(x), 0\}\\
&= (\Box_R f \vee 0)(x) = (\Box_R f)^+(x).
\end{align*}
If $x \in E$, then $(\Box_R (f^+))(x) = 1 = (\Box_R f)^+(x)$.
Thus, $\Box_R (f^+) = (\Box_R f)^+$.

(4). For $x \in D$, we have
\begin{align*}
\Box_R (f+\lambda)(x) &=\inf \{ f(y)+\lambda \mid y \in R[x] \}\\
&=\inf \{ f(y) \mid y \in R[x] \}+\lambda  \\
&= \Box_R f(x) + \lambda.
\end{align*}
On the other hand,
\[
(\Box_R f + \Box_R \lambda - \Box_R 0)(x) = (\Box_R f)(x) + (\Box_R \lambda)(x) - (\Box_R 0)(x) = (\Box_R f)(x) + \lambda.
\]
Therefore, $\Box_R (f+\lambda)(x) = (\Box_R f + \Box_R \lambda - \Box_R 0)(x)$.
If $x \in E$, then $\Box_R (f+\lambda)(x)  = 1 = (\Box_Rf  + \Box_R \lambda - \Box_R 0)(x)$.
Thus, $\Box_R(f + \lambda) = \Box_Rf  + \Box_R \lambda - \Box_R 0$.

(5). Let $0 \le \lambda$. For $x \in D$, we have
\begin{align*}
(\Box_R \lambda f )(x) &=\inf \{ \lambda f(y) \mid y \in R[x] \}= \lambda \inf \{ f(y) \mid y \in R[x] \} \\
&= \lambda (\Box_R f)(x) = (\Box_R\lambda)(x)(\Box_R f)(x) = (\Box_R\lambda \Box_R f)(x).
\end{align*}
If $x \in E$, then $(\Box_R \lambda f)(x) = 1 = (\Box_R \lambda)(\Box_R f)(x)$.
Thus, $\Box_R (\lambda f) = (\Box_R\lambda)(\Box_R f)$.
\end{proof}

\begin{remark} \label{rem: properties of diamond}
Lemma~\ref{lem: properties of box} can be stated dually in terms of $\Diamond_R$ as follows. Let $(X,R)$ be a compact Hausdorff frame, $f,g \in C(X)$, and $\lambda \in \mathbb{R}$.
\begin{enumerate}
\item $\Diamond_R (f \vee g)=\Diamond_R f \vee \Diamond_R g$. In particular, $\Diamond_R$ is order preserving.
\item $\Diamond_R \lambda = \lambda(\Diamond_R 1)$. In particular, $\Diamond_R 0 = 0$.
\item $\Diamond_R (f \wedge 1) = (\Diamond_R f) \wedge 1$.
\item $\Diamond_R(f+\lambda)=\Diamond_R f + \Diamond_R\lambda$.
\item If $0 \le \lambda$, then $\Diamond_R (\lambda f) = \Diamond_R\lambda \Diamond_R f$.
\end{enumerate}

The identities (1), (3),
and (5) are direct translations of the corresponding identities for $\Box_R$. However, the identities (2) and (4) are
simpler. We next show why $\Diamond_R$ affords such simplifications.

For (2), since $\Diamond_R 1 = 1 - \Box_R 0$, by Lemma~\ref{lem: properties of box}(2),
\[
\Diamond_R \lambda = 1 - \Box_R(1-\lambda) = 1 - (1 - \lambda + \lambda\Box_R 0) = \lambda(1 - \Box_R 0) = \lambda\Diamond_R 1.
\]

For (4), by using (4) and (2) of Lemma~\ref{lem: properties of box}, we have
\begin{align*}
\Diamond_R(f + \lambda) &= 1 - \Box_R(1- (f + \lambda)) = 1 - \Box_R((1-f) - \lambda) \\
&= 1 - (\Box_R(1-f) + \Box_R(-\lambda) - \Box_R 0) = \Diamond_R f - \Box_R(-\lambda) + \Box_R 0 \\
&= \Diamond_R f - (-\lambda + (1+\lambda)\Box_R 0) + \Box_R 0 = \Diamond_R f + \lambda(1 - \Box_R 0) = \Diamond_R f + \Diamond_R \lambda.
\end{align*}

In Remark~\ref{rem: why box} we explain why we prefer to work with $\Box_R$.
\end{remark}

Lemmas~\ref{lem: Box on C} and~\ref{lem: properties of box} motivate the main definition of this paper.

\begin{definition}\label{def:mbal}
\begin{enumerate}
\item[]
\item Let $A \in \bal$. We say that a unary function $\Box: A \to A$ is a \textit{modal operator} on $A$ provided $\Box$ satisfies the following
axioms for each $a,b \in A$ and $\lambda \in \mathbb{R}$:
\begin{enumerate}
\item[(M1)] $\Box (a \meet b) = \Box a \meet \Box b$.
\item[(M2)] $\Box \lambda = \lambda + (1 - \lambda)\Box 0$.
\item[(M3)] $\Box (a^+)=(\Box a)^+$.
\item[(M4)] $\Box (a+\lambda)=\Box a + \Box\lambda - \Box 0$.
\item[(M5)] $\Box (\lambda a)= (\Box\lambda)(\Box a)$ provided $\lambda\ge 0$.
\end{enumerate}
\item If $\Box$ is a modal operator on $A\in\bal$, then we call the pair $\FA = (A, \Box)$ a \textit{modal bounded archimedean $\ell$-algebra}.
\item Let $\mbal$ be the category of modal bounded archimedean $\ell$-algebras and unital $\ell$-algebra homomorphisms preserving $\Box$.
\end{enumerate}
\end{definition}

\begin{remark}
We can define $\Diamond:A \to A$ dual to $\Box$ by $\Diamond a = 1 - \Box (1 - a)$ for each $a \in A$. Then $(A,\Diamond)$ satisfies the axioms for $\Diamond$ dual to the ones for $\Box$ in Definition~\ref{def:mbal}(1) (see Remark~\ref{rem: properties of diamond}). While algebras in $\mbal$ can be axiomatized either in the signature of $\Box$ or $\Diamond$, we prefer to work with $\Box$ for the reasons given in Remark~\ref{rem: why box}.
\end{remark}

\begin{remark} \label{rem: serial}
If $\Box 0 = 0$, then (M2), (M4), and (M5) simplify to the following:
\begin{enumerate}
\item[(M2${}'$)]$\Box \lambda = \lambda$.
\item[(M4${}'$)]$\Box (a + \lambda) = \Box a + \lambda$.
\item[(M5${}'$)]$\Box (\lambda a) = \lambda \Box a$ provided $\lambda \ge 0$.
\end{enumerate}
Moreover, (M2${}'$) follows from (M4${}'$) by setting $a = 0$. Furthermore, $\Diamond a = -\Box(-a)$.
In Section~\ref{subsec: correspondence} we will see that $\Box 0 = 0$ holds iff the binary relation $R_\Box$ on the Yosida space of $A$ is serial (see Definition~\ref{def: R_box on Y_A}).
\end{remark}

\begin{lemma}\label{lem: properties}
Let $(A, \Box) \in\mbal$, $a,b\in A$, and $\lambda\in\mathbb R$.
\begin{enumerate}
\item $a \leq b$ implies $\Box a \leq \Box b$.
\item $\Box 1 = 1$.
\item $a \ge 0$ implies $\Box a \ge 0$.
\item $(\Box 0)(\Box a) = \Box0$. In particular, $\Box 0$ is an idempotent.
\item $\Box(a + \lambda) = \Box a + \lambda(1-\Box 0)$.
\item $\Diamond a = -\Box(-a)(1 - \Box 0)$.
\item $(\Diamond a)(\Box 0) = 0$.
\end{enumerate}
\end{lemma}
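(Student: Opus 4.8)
The plan is to derive each item directly from the axioms (M1)--(M5), arranging the argument so later items reuse earlier ones. Items (1)--(3) each come from a single axiom. For (1), if $a \le b$ then $a \wedge b = a$, so (M1) gives $\Box a = \Box a \wedge \Box b$, which is exactly $\Box a \le \Box b$. For (2), I specialize (M2) to $\lambda = 1$, obtaining $\Box 1 = 1 + (1-1)\Box 0 = 1$. For (3), if $a \ge 0$ then $a = a^+$, so (M3) gives $\Box a = \Box(a^+) = (\Box a)^+ \ge 0$.

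The crucial observation is item (4), which I would obtain by specializing (M5) to $\lambda = 0$: since $0\cdot a = 0$ and $0 \ge 0$, axiom (M5) yields $\Box 0 = (\Box 0)(\Box a)$ for every $a \in A$. Taking $a = 0$ gives $(\Box 0)^2 = \Box 0$, so $\Box 0$ is idempotent. This is the key technical fact on which the dual statements (6) and (7) rest. For item (5), I would combine (M4) and (M2): substituting $\Box\lambda = \lambda + (1-\lambda)\Box 0$ into $\Box(a+\lambda) = \Box a + \Box\lambda - \Box 0$ and cancelling the $\Box 0$ terms leaves $\Box a + \lambda(1 - \Box 0)$.

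For (6), I would start from $\Diamond a = 1 - \Box(1-a)$, write $1 - a = (-a) + 1$, and apply (5) with $\lambda = 1$ to get $\Box(1-a) = \Box(-a) + (1 - \Box 0)$; this collapses $\Diamond a$ to $\Box 0 - \Box(-a)$. Expanding the target as $-\Box(-a)(1-\Box 0) = -\Box(-a) + \Box(-a)\,\Box 0$ and invoking item (4) with $a$ replaced by $-a$ to rewrite $\Box(-a)\,\Box 0 = \Box 0$ shows the two expressions coincide. Finally, (7) follows from (6) and idempotency: $(\Diamond a)(\Box 0) = (\Box 0 - \Box(-a))\,\Box 0 = (\Box 0)^2 - \Box(-a)\,\Box 0 = \Box 0 - \Box 0 = 0$, using (4) one last time. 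There is no serious obstacle here; the only step requiring a moment's thought is recognizing that setting $\lambda = 0$ in (M5) produces the idempotency of $\Box 0$, which drives everything that follows.
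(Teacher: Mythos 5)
Your proposal is correct and follows essentially the same route as the paper: items (1)--(5) are derived from the axioms exactly as in the paper's proof (including the key step of setting $\lambda = 0$ in (M5) to get idempotency of $\Box 0$), and your derivations of (6) and (7) are only trivial reorganizations of the paper's, using item (5) in place of (M4) plus (2) and multiplying the intermediate form $\Box 0 - \Box(-a)$ by $\Box 0$ rather than the final one. No gaps.
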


\begin{proof}
(1). If $a \leq b$, then $a \meet b=a$. Therefore, by (M1), $\Box a = \Box (a \meet b)= \Box a \meet \Box b$. Thus, $\Box a \leq \Box b$.

(2). This follows by substituting $\lambda=1$ in (M2).

(3). From (M3) and $a \ge 0$ we have $\Box a = \Box (a^+) = (\Box a)^+ \ge 0$.

(4). By (M5), $\Box 0 = \Box (0a) = (\Box 0)(\Box a)$. Setting $a = 0$ gives $(\Box 0)^2 = \Box 0$.

(5). By (M4), $\Box(a + \lambda) = \Box a + \Box \lambda - \Box 0$. By (M2),
$\Box \lambda = \lambda + (1-\lambda)(\Box 0) = \lambda(1 - \Box 0) + \Box 0$. Therefore, $\Box \lambda - \Box 0 = \lambda(1 - \Box 0)$,
and so (5) follows.

(6). By (M4), (2), and (4) we have
\begin{align*}
\Diamond a &= 1 - \Box(1-a) = 1 - (\Box(-a) + \Box 1 - \Box 0) \\
&= -\Box(-a) + \Box 0 = -\Box(-a) + \Box(-a)\Box 0 \\
&= -\Box(-a)(1- \Box 0).
\end{align*}

(7). Since $\Box 0$ is an idempotent by (4), we have $(1 - \Box 0)\Box 0 = 0$. Multiplying both sides of (6) by $\Box 0$ yields $\Diamond a \Box 0 = 0$.
\end{proof}

As follows from Lemmas~\ref{lem: Box on C} and~\ref{lem: properties of box}, if $(X,R)$ is a compact Hausdorff frame, then $(C(X),\Box_R)\in\mbal$. We now extend this correspondence to a contravariant functor. For this we recall the definition of a bounded morphism.

\begin{definition}
\begin{enumerate}
\item[]
\item A \emph{bounded morphism} (or \emph{p-morphism}) between Kripke frames $\mathfrak F=(X,R)$ and $\mathfrak G=(Y,S)$ is a map $f:X\to Y$ satisfying $f(R[x])=S[f(x)]$ for each  $x\in X$ (equivalently, $f^{-1} (S^{-1}[y]) = R^{-1}[f^{-1}(y)]$ for each $y \in Y$).
\item Let $\KHK$ be the category of compact Hausdorff frames and continuous bounded morphisms.
\end{enumerate}
\end{definition}

\begin{lemma} \label{lem: varphi* a morphism}
If $\mathfrak F=(X,R)$ and $\mathfrak G=(Y,S)$ are compact Hausdorff frames and $\varphi : X \to Y$ is a continuous bounded morphism, then $\varphi^*$ is a morphism in $\mbal$.
\end{lemma}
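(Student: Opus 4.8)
The plan is to invoke Gelfand duality to reduce the statement to a single verification. Since $\varphi : X \to Y$ is continuous, it is a morphism in $\KHaus$, so by the description of the functor $(-)^*:\KHaus\to\bal$ the map $\varphi^* = (-\circ\varphi):C(Y)\to C(X)$ is already a unital $\ell$-algebra homomorphism, i.e.\ a morphism in $\bal$. Consequently the only thing left to prove is that $\varphi^*$ commutes with the modal operators, that is,
\[
\varphi^*(\Box_S f) = \Box_R(\varphi^* f) \quad \text{for each } f \in C(Y).
\]
Both sides are elements of $C(X)$ by Lemma~\ref{lem: Box on C}, so it suffices to check equality pointwise: for every $x \in X$ I must show $(\Box_S f)(\varphi(x)) = (\Box_R(f\circ\varphi))(x)$.

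The first step is to transfer the dichotomy of Notation~\ref{not: D and E} across $\varphi$. Writing $D_R,E_R$ for the sets associated with $(X,R)$ and $D_S,E_S$ for those associated with $(Y,S)$, I would use the bounded morphism identity $\varphi(R[x]) = S[\varphi(x)]$ together with the elementary fact that the image of a set under a function is empty exactly when that set is empty. This yields $R[x]=\varnothing \iff S[\varphi(x)]=\varnothing$, and hence $x\in E_R$ iff $\varphi(x)\in E_S$ (equivalently $x\in D_R$ iff $\varphi(x)\in D_S$). Establishing this equivalence at the outset is what guarantees that the ``default'' values of $\Box_R$ and $\Box_S$ are matched up at the right points.

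With this in hand I would split into two cases using Definition~\ref{def: Box_R}. If $x\in E_R$, then $\varphi(x)\in E_S$, so both $(\Box_R(f\circ\varphi))(x)$ and $(\Box_S f)(\varphi(x))$ equal $1$. If $x\in D_R$, then $\varphi(x)\in D_S$, and I would compute
\[
(\Box_R(f\circ\varphi))(x) = \inf\{f(\varphi(y)) \mid y \in R[x]\} = \inf\{f(z) \mid z \in S[\varphi(x)]\} = (\Box_S f)(\varphi(x)),
\]
where the crucial middle equality is precisely the bounded morphism condition $\varphi(R[x]) = S[\varphi(x)]$ rewritten at the level of the image sets. This settles both cases and shows that $\varphi^*$ preserves $\Box$, completing the proof.

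The argument is largely routine once the reduction to a pointwise check is in place; the only genuine content is the interaction between the bounded morphism condition and the definition of the modal operator. The one point requiring care, rather than being cosmetic, is the transfer $x\in D_R \iff \varphi(x)\in D_S$, since without it the infima in the $D$-case and the constant value $1$ in the $E$-case could be compared at mismatched arguments.
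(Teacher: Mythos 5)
Your proof is correct and follows essentially the same route as the paper: reduce via Gelfand duality to the preservation of $\Box$, use the bounded morphism identity $\varphi(R[x]) = S[\varphi(x)]$ to match the empty/non-empty cases of $R[x]$ and $S[\varphi(x)]$, and compute the infimum over the image set in the non-empty case. The only cosmetic difference is that you isolate the transfer $x\in D_R \iff \varphi(x)\in D_S$ as an explicit preliminary step, whereas the paper folds it into the two cases.
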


\begin{proof}
That $\varphi^*$ is a $\bal$-morphism follows from Gelfand duality. Therefore, it is sufficient to prove that $\varphi^*$ preserves $\Box$; that is, $\varphi^*(\Box_S f) = \Box_R \varphi^*(f)$ for each $f\in C(Y)$.
Since $\varphi$ is a bounded morphism, $\varphi(R[x]) = S[\varphi(x)]$ for each $x \in X$. Let $x \in X$ and $f\in C(Y)$. If
$R[x] \ne \varnothing$, then $S[\varphi(x)] \ne \varnothing$, so
\begin{equation*}
\begin{split}
\varphi^*(\Box_S f)(x) &= (\Box_S f \circ \varphi)(x)=(\Box_S f)(\varphi(x)) =\inf (f(S[\varphi(x)]))\\
&=\inf (f(\varphi(R[x])))= \inf ((f \circ \varphi)(R[x])) =\Box_R(f \circ \varphi)(x) \\
&= \Box_R(\varphi^*(f))(x).
\end{split}
\end{equation*}
If $R[x] = \varnothing$, then $S[\varphi(x)] = \varnothing$, so
$\varphi^*(\Box_S f)(x) = (\Box_S f)(\varphi(x)) = 1 = (\Box_R \varphi^*(f))(x)$.
Thus, $\varphi^*(\Box_S f) = \Box_R \varphi^*(f)$.
\end{proof}

\begin{theorem}
There is a contravariant functor $(-)^*:\KHK\to\mbal$ which sends $\mathfrak F=(X,R)$ to $\mathfrak F^*=(C(X),\Box_R)$ and a morphism $\varphi$ in $\K$ to $\varphi^*$.
\end{theorem}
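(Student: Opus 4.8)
The plan is to assemble the functor from the ingredients already in place, checking the three requirements that make $(-)^*$ a contravariant functor: well-definedness on objects, well-definedness on morphisms, and preservation of identities and composition.

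First, for the action on objects: given a compact Hausdorff frame $\mathfrak{F} = (X,R)$, I must see that $\mathfrak{F}^* = (C(X), \Box_R)$ is an object of $\mbal$. Lemma~\ref{lem: Box on C} guarantees that $\Box_R$ maps $C(X)$ into $C(X)$, so it is a genuine unary operation on the $\bal$-object $C(X) = X^*$ supplied by Gelfand duality. Lemma~\ref{lem: properties of box} then verifies precisely the five identities required of a modal operator in Definition~\ref{def:mbal}, namely (M1)--(M5), so $\Box_R$ is a modal operator and $\mathfrak{F}^* \in \mbal$.

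Second, for the action on morphisms: a morphism $\varphi : \mathfrak{F} \to \mathfrak{G}$ in $\KHK$ is by definition a continuous bounded morphism $\varphi : X \to Y$, and Lemma~\ref{lem: varphi* a morphism} shows that $\varphi^* : C(Y) \to C(X)$ is a morphism in $\mbal$; that is, a unital $\ell$-algebra homomorphism that additionally preserves $\Box$. Thus $\varphi^*$ is a legitimate $\mbal$-morphism $\mathfrak{G}^* \to \mathfrak{F}^*$, with the direction reversed as befits a contravariant functor.

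Third, for functoriality I would invoke Gelfand duality. A morphism in $\KHK$ is in particular a continuous map, and the underlying $\bal$-morphism of $\varphi^*$ is exactly the image of $\varphi$ under the Gelfand functor $(-)^* : \KHaus \to \bal$. Since the latter is already a contravariant functor, the equalities $(\mathrm{id}_X)^* = \mathrm{id}_{C(X)}$ and $(\psi \circ \varphi)^* = \varphi^* \circ \psi^*$ hold as identities of $\bal$-morphisms. Having checked in the previous step that each $\varphi^*$ preserves $\Box$, these same equalities hold in $\mbal$, so $(-)^*$ preserves identities and composition. I do not expect a genuine obstacle here, since all of the substantive work has already been carried out in Lemmas~\ref{lem: Box on C}, \ref{lem: properties of box}, and~\ref{lem: varphi* a morphism}; the remaining verifications are formal. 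The one point worth stating explicitly is that the composite of continuous bounded morphisms is again a continuous bounded morphism (and identity maps are such), which ensures both that $\KHK$ is a well-defined category and that functoriality is inherited cleanly from Gelfand duality.
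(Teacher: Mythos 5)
Your proposal is correct and follows essentially the same route as the paper: objects are handled by Lemmas~\ref{lem: Box on C} and~\ref{lem: properties of box}, morphisms by Lemma~\ref{lem: varphi* a morphism}, and the preservation of identities and composition is the same elementary verification (which the paper leaves implicit and you discharge by inheriting it from the Gelfand functor). No gaps.
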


\begin{proof}
If $\mathfrak F \in \KHK$, then $\mathfrak F^* \in \mbal$ by Lemmas~\ref{lem: Box on C} and~\ref{lem: properties of box}. If $\varphi$ is a morphism in $\KHK$, then $\varphi^*$ is a morphism in $\mbal$ by Lemma~\ref{lem: varphi* a morphism}. It is elementary to see that $(\psi\circ\varphi)^*=\varphi^*\circ\psi^*$ and that $(-)^*$ preserves identity morphisms. Thus, $(-)^*$ is a contravariant functor.
\end{proof}

\section{Continuous relations on the Yosida space} \label{sec: from rings to frames}

In this section we define a contravariant functor $(-)_*:\mbal\to\KHK$ in the other direction, which is technically the most involved part of the paper.

Let $A\in\bal$. For $S \subseteq A$ let
\[
S^+=\{a \in S \mid a \geq 0 \}.
\]
We point out that if $I$ is an $\ell$-ideal of $A$, then
$I^+ = \{ a^+ \mid a \in I\}$.

\begin{definition} \label{def: R_box on Y_A}
Let $(A,\Box)\in\mbal$ and let $Y_A$ be the Yosida space of $A$. Define $R_\Box$ on $Y_A$ by
\begin{equation*}
x R_\Box y \quad\mbox{iff}\quad \Box y^+ \subseteq x, \quad\mbox{iff}\quad y^+ \subseteq \Box^{-1}x.
\end{equation*}
\end{definition}

\begin{remark} \label{rem: why box}
We have that $xR_\Box y$ iff $(\forall a \ge 0)(a+y = 0+y \Rightarrow \Box a + x = 0 + x)$. If we work with $\Diamond$ instead of $\Box$, since $\Diamond a=1-\Box(1-a)$, the definition becomes $x R_\Box y$ iff $(\forall b \le 1)(b + y = 1 + y \Rightarrow \Diamond b + x = 1 + x)$. Thus, $x R_\Box y$ iff $\{ 1 - \Diamond b \mid 1-b \in y, b \le 1\} \subseteq x$. This more complicated definition is one reason why we prefer to work with $\Box$ rather than $\Diamond$. Another is that, as is standard in working with ordered algebras, using $\Box$ allows us to work with the positive cone rather than the set of elements below 1.
\end{remark}

Let $A \in \bal$. We recall that the \emph{zero set} of $a \in A$ is defined as
\[
\zer(a) = \{ x \in Y_A \mid a \in x \}.
\]
If $S \subseteq A$, then we set
\[
\zer(S) = \bigcap \{ \zer(a) \mid a \in S\} = \{ x \in Y_A \mid S \subseteq x\}.
\]
It is easy to see that if $I$ is the $\ell$-ideal generated by $S$, then $\zer(S) = \zer(I)$. We define the \textit{cozero set of $S$} as
\[
\cozer(S)=Y_A \setminus \zer(S)=\{ x \in Y_A \mid S \not\subseteq x \}.
\]
Since the zero sets are exactly the closed sets, the cozero sets are exactly the open sets of $Y_A$.
The family $\{ \cozer(a) \mid a \in A \}$ then constitutes a basis for the topology on $Y_A$.

\begin{remark} \label{rem: properties of primes}
Let $A \in \bal$, $Y_A$ be the Yosida space of $A$, $x \in Y_A$, and $a \in A$.
\begin{enumerate}
\item $x$ is a prime ideal of $A$ because $A/x \cong \mathbb{R}$ (see, e.g., \cite[Cor.~2.7]{HJ61}).
\item Either $a^+ \in x$ or $a^- \in x$. This follows from (1) and $a^+ a^- = 0$.
\item $a^+ \in x$ and $a^- \notin x$ iff $a + x < 0 + x$ (see \cite[Rem.~2.11]{BMO16}).
\item $a^+ \in x$ iff $a + x \le 0 + x$. For, if $a^+ \in x$, then $a + x = (a^+ - a^-) + x = -a^- + x \le 0 + x$ since $a^- \ge 0$.
Conversely, if $a + x \le 0 + x$, then either $a + x < 0 + x$, in which case $a^+ \in x$ by (3), or $a + x = 0 + x$, in which case
$a \in x$, so $a^+ \in x$.
\item $a^- \in x$ and $a^+ \notin x$ iff $a + x > 0 + x$ (see \cite[Rem.~2.11]{BMO16}).
\item $a^- \in x$ iff $a + x \ge 0 + x$. The proof is similar to that of (4) but uses (5) instead of (3).
\end{enumerate}
\end{remark}

Recalling Notation~\ref{not: D and E}, if $(Y_A,R_\Box)$ is the dual of
$(A, \Box) \in \mbal$,
then we denote $R_\Box^{-1}[Y_A]$ by $D_A$ and $Y_A \setminus D_A$ by $E_A$.

\begin{lemma} \label{lem: cosets}
Let $(A, \Box) \in \mbal$, $a \in A$, $\lambda \in \mathbb{R}$, and $x \in Y_A$.
\begin{enumerate}
\item If $x \in D_A$, then $\Box 0 \in x$.
\item If $\Box 0 \in x$, then $\Box(a + \lambda) + x = (\Box a + \lambda) + x$.
\item If $\Box 0 \in x$, then $\Box((a - \lambda)^+) \in x$ iff $(\Box a - \lambda)^+ \in x$.
\item If $\Box 0 \in x$, then $\Diamond a + x = -\Box(-a) + x$.
\item If $\Box 0 \notin x$, then $1 - \Box a  \in x$.
\item If $\Diamond a \notin x$, then $\Box 0 \in x$.
\end{enumerate}
\end{lemma}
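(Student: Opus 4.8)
The plan is to exploit that each $x \in Y_A$ is a maximal $\ell$-ideal, so the quotient map $q_x : A \to A/x \cong \mathbb{R}$ is a surjective $\ell$-algebra homomorphism; in particular $q_x$ is a ring homomorphism preserving the lattice operations, hence it preserves positive parts, so that $u + x = v + x$ implies $u^+ + x = v^+ + x$. Two further general facts will drive every case. First, $x$ is a prime ideal (Remark~\ref{rem: properties of primes}(1)), so a product lying in $x$ forces one of its factors into $x$. Second, $\Box 0$ is idempotent by Lemma~\ref{lem: properties}(4), so $q_x(\Box 0)$ is an idempotent of $\mathbb{R}$ and is therefore $0$ or $1$; consequently $\Box 0 \notin x$ is equivalent to $1 - \Box 0 \in x$. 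With these in hand, each item reduces to an identity already recorded in Lemma~\ref{lem: properties}, together with the observation that the relevant discrepancy is a multiple of $\Box 0$ and hence lies in $x$ whenever $\Box 0 \in x$.

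For (1) I would note that $x \in D_A$ means $x R_\Box y$ for some $y \in Y_A$, i.e.\ $\Box y^+ \subseteq x$; since $0 \in y^+$, this gives $\Box 0 \in \Box y^+ \subseteq x$. For (2), Lemma~\ref{lem: properties}(5) yields $\Box(a + \lambda) - (\Box a + \lambda) = \lambda(1 - \Box 0) - \lambda = -\lambda\,\Box 0$, which lies in $x$ once $\Box 0 \in x$. For (4), Lemma~\ref{lem: properties}(6) gives $\Diamond a + \Box(-a) = \Box(-a)\bigl(1 - (1 - \Box 0)\bigr) = \Box(-a)\,\Box 0 \in x$, whence $\Diamond a + x = -\Box(-a) + x$.

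For (3) I would apply (M3) to write $\Box((a - \lambda)^+) = (\Box(a - \lambda))^+$, then apply (2) with $-\lambda$ in place of $\lambda$ to obtain $\Box(a - \lambda) + x = (\Box a - \lambda) + x$; since $q_x$ preserves positive parts, this congruence passes to positive parts, giving $(\Box(a - \lambda))^+ + x = (\Box a - \lambda)^+ + x$, and as $x$ is an ideal the two sides lie in $x$ together. For (5) I would rewrite Lemma~\ref{lem: properties}(4) as $(\Box 0)(\Box a - 1) = 0 \in x$; primeness of $x$ together with $\Box 0 \notin x$ forces $\Box a - 1 \in x$, i.e.\ $1 - \Box a \in x$. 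For (6), Lemma~\ref{lem: properties}(7) gives $(\Diamond a)(\Box 0) = 0 \in x$, and primeness forces $\Diamond a \in x$ or $\Box 0 \in x$, so $\Diamond a \notin x$ yields $\Box 0 \in x$.

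None of the steps is genuinely hard, since the heavy lifting was carried out in Lemma~\ref{lem: properties}. The one point requiring care will be (3): one must ensure that congruence modulo $x$ is respected by the operation $(\cdot)^+$, which is precisely where the fact that $x$ is a maximal $\ell$-ideal (so that $q_x$ is a lattice homomorphism onto $\mathbb{R}$), rather than merely a maximal ring ideal, is used.
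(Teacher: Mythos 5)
Your proposal is correct and follows essentially the same route as the paper: item by item it invokes the same facts (the definition of $R_\Box$ for (1), the consequence of (M2)/(M4) for (2), Lemma~\ref{lem: properties}(6) for (4), and primeness of $x$ with Lemma~\ref{lem: properties}(4),(7) for (5) and (6)). The only cosmetic difference is in (3), where you justify that congruence modulo $x$ respects $(\cdot)^+$ via the quotient $\ell$-algebra homomorphism $A \to A/x \cong \mathbb{R}$, whereas the paper cites Remark~\ref{rem: properties of primes}(4); these amount to the same thing.
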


\begin{proof}
(1). If $x \in D_A$, then there is $y$ with $x R_\Box y$. Therefore, since $0 \in y^+$, we have $\Box 0 \in x$.

(2). By (M4) and (M2), $\Box(a + \lambda) = \Box a + \lambda - \lambda\Box 0$. Therefore, if $\Box 0 \in x$, then
$\Box(a + \lambda) + x = (\Box a + \lambda) + x$.

(3). This follows from (M3), Remark~\ref{rem: properties of primes}(4), and (2).

(4). Apply Lemma~\ref{lem: properties}(6).

(5). By Lemma~\ref{lem: properties}(4), $\Box 0 = (\Box 0)(\Box a)$, so $(\Box 0)(1 - \Box a) = 0 \in x$. Since $\Box 0 \notin x$ and $x$ is a prime ideal,
$1 - \Box a \in x$.

(6). By Lemma~\ref{lem: properties}(7), $(\Diamond a)(\Box 0) = 0 \in x$. Since $x$ is a prime ideal and $\Diamond a \notin x$, we have $\Box 0 \in x$.
\end{proof}

\begin{proposition}\label{prop: R[x]}
$R_\Box[x]$ is closed for every $x \in Y_A$.
\end{proposition}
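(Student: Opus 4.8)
The plan is to prove that the complement $Y_A\setminus R_\Box[x]$ is open. By Definition~\ref{def: R_box on Y_A} and the fact that $y^+=\{a^+\mid a\in y\}$, a point $y$ fails to lie in $R_\Box[x]$ precisely when there is some $b\in y^+$ with $\Box b\notin x$. So I would fix such a $y$ together with a witness $b$, and construct an open neighborhood of $y$ all of whose points still fail the relation. The obstruction to doing this naively is that membership $b\in z$ is a \emph{closed} condition on $z$ (it says $\zeta_A(b)(z)=0$), so the original witness $b$ need not belong to maximal ideals near $y$; the idea is to replace $b$ by a cutoff that continues to witness failure of the relation on a whole neighborhood.

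The key preliminary step is to show that $\Box$ is non-expansive for the norm. Given $a,b$, set $\epsilon=\|a-b\|$; since $|c|\le\|c\|\cdot 1$ in any archimedean $\ell$-algebra, we get $|a-b|\le\epsilon$, hence $b-\epsilon\le a\le b+\epsilon$. Applying monotonicity of $\Box$ (Lemma~\ref{lem: properties}(1)) together with the identity $\Box(b\pm\epsilon)=\Box b\pm\epsilon(1-\Box 0)$ (Lemma~\ref{lem: properties}(5)) yields $|\Box a-\Box b|\le\epsilon(1-\Box 0)$. As $0\le\Box 0\le 1$ (because $\Box 0\ge 0$ by Lemma~\ref{lem: properties}(3) and $\Box 0\le\Box 1=1$), this gives $\|\Box a-\Box b\|\le\|a-b\|$. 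Consequently the real-valued map $a\mapsto\zeta_A(\Box a)(x)$ is $1$-Lipschitz, since $|\zeta_A(\Box a)(x)-\zeta_A(\Box b)(x)|\le\|\Box a-\Box b\|$.

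Now I carry out the perturbation. With $b\in y^+$ and $\Box b\notin x$, note $\Box b\ge 0$ by Lemma~\ref{lem: properties}(3), so $\mu:=\zeta_A(\Box b)(x)>0$. Fix a real $\epsilon$ with $0<\epsilon<\mu$ and put $b_\epsilon=(b-\epsilon)^+$. Since $b\ge 0$ one checks $-\epsilon\le b_\epsilon-b\le 0$, whence $\|b_\epsilon-b\|\le\epsilon$; the Lipschitz bound then gives $\zeta_A(\Box b_\epsilon)(x)\ge\mu-\epsilon>0$, so $\Box b_\epsilon\notin x$. Finally, the set $U=\cozer((\epsilon-b)^+)=\{z\in Y_A\mid\zeta_A(b)(z)<\epsilon\}$ is open and contains $y$ (as $\zeta_A(b)(y)=0<\epsilon$). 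For each $z\in U$ we have $\zeta_A(b_\epsilon)(z)=(\zeta_A(b)(z)-\epsilon)^+=0$, so $b_\epsilon\in z^+$; together with $\Box b_\epsilon\notin x$ this shows $z\notin R_\Box[x]$. Hence $U\subseteq Y_A\setminus R_\Box[x]$, and the complement is open. No case distinction on whether $x\in D_A$ or $x\in E_A$ is needed: if $R_\Box[x]=\varnothing$ the conclusion is immediate.

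I expect the crux to be the non-expansiveness of $\Box$: once that is in hand, the cutoff $(b-\epsilon)^+$ is the standard device for trading the closed condition $b\in z$ for an open one, and the rest is routine verification.
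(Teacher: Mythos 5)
Your proof is correct, and its skeleton coincides with the paper's: both arguments show the complement of $R_\Box[x]$ is open by fixing a witness $b \ge 0$ with $b \in y$ and $\Box b \notin x$, both use the very same basic open neighborhood of $y$ (your $\cozer((\epsilon - b)^+)$ is the paper's $\cozer((b-\epsilon)^-)$, since $(b-\epsilon)^- = (\epsilon - b)^+$), and both hinge on the cutoff element $(b-\epsilon)^+$. Where you genuinely diverge is in how you certify that this cutoff still witnesses failure of the relation at every point of the neighborhood. The paper argues by contradiction inside the coset calculus it has already set up: if some $z$ in the neighborhood satisfied $x R_\Box z$, then primality of $z$ (Remark~\ref{rem: properties of primes}(2)) forces $(b-\epsilon)^+ \in z$, hence $\Box((b-\epsilon)^+) \in x$ and $\Box 0 \in x$, and Lemma~\ref{lem: cosets}(3) transfers this to $(\Box b - \epsilon)^+ \in x$, contradicting the choice of $\epsilon$. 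You instead prove a global structural fact nowhere stated in the paper --- that $\Box$ is non-expansive for the norm, $\|\Box a - \Box b\| \le \|a - b\|$, which follows from monotonicity together with $\Box(b \pm \epsilon) = \Box b \pm \epsilon(1-\Box 0)$ and $0 \le \Box 0 \le 1$ --- and use it to verify directly that $\zeta_A(\Box (b-\epsilon)^+)(x) \ge \zeta_A(\Box b)(x) - \epsilon > 0$, so $\Box((b-\epsilon)^+) \notin x$ unconditionally. Your route buys a clean, reusable lemma (uniform continuity of $\Box$, which in particular implies $\Box$ extends to the uniform completion) and avoids both the contradiction and any appeal to $\Box 0 \in x$ or Lemma~\ref{lem: cosets}(3); the paper's route needs no norm estimates and stays entirely within the prime-ideal and coset arithmetic it reuses throughout Section~\ref{sec: from rings to frames}. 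One small point to make explicit if you write this up: the inequality $|c| \le \|c\|\cdot 1$ requires that the infimum defining the norm is attained, which is exactly where the archimedean axiom enters.
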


\begin{proof}
We prove that $Y_A \setminus R_\Box[x]$ is open for every $x \in Y_A$. Let $y \notin R_\Box[x]$, so
$y^+ \nsubseteq \Box^{-1} x$. Therefore, there is $a \geq 0$ such that $a \in y$ and $\Box a \notin x$. By Lemma~\ref{lem: properties}(3),
$\Box a \geq 0$, so there is $0 \le \lambda \in \mathbb{R}$
such that $(\Box a -\lambda) +x > 0+x$ but $(a-\lambda)+y < 0+y$. By Remark~\ref{rem: properties of primes}(3), $(a-\lambda)^- \notin y$ and
$(\Box a - \lambda)^+ \notin x$. Thus, $y\in\cozer((a-\lambda)^-)$, and
it remains to show that $\cozer((a-\lambda)^-) \cap R_\Box[x]= \varnothing$. Suppose not. Then there is $z$ such that $x R_\Box z$ and
$z \in \cozer((a-\lambda)^-)$. Since $z$ is a prime ideal and $(a-\lambda)^- \notin z$, we have $(a-\lambda)^+ \in z$
(see Remark~\ref{rem: properties of primes}(2)). But $x R_\Box z$ means $z^+ \subseteq \Box^{-1} x$, so $\Box 0, \Box((a-\lambda)^+) \in x$. Thus, by (M3) and Lemma~\ref{lem: cosets}(3),
$(\Box a - \lambda)^+ \in x$, hence $(\Box a - \lambda) + x \le 0 + x$. The obtained contradiction proves that
$\cozer((a-\lambda)^-) \cap R_\Box[x]= \varnothing$, completing the proof.
\end{proof}

For a topological space $X$ and a continuous real-valued function $f$ on $X$, we recall that the \emph{zero set} of $f$ is
\[
Z(f)=\{x\in X \mid f(x)=0\}
\]
and the \emph{cozero set} of $f$ is
\[
\mathrm{coz}(f) = X \setminus Z(f) = \{x\in X \mid f(x) \ne 0\}.
\]
The following lemma is a consequence of
\cite[Prob.~1D, p. 21]{GJ60}.

\begin{lemma} \label{lem: GJ}
Let $A\in\bal$ and $a,s \in A$. If $\zer(a) \subseteq \interi \zer(s)$, then there is $f \in C(Y_A)$
such that $\zeta_A(s)=\zeta_A(a)f$ in $C(Y_A)$.
\end{lemma}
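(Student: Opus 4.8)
The plan is to transport the statement across Gelfand duality into the ring $C(Y_A)$, where it becomes a purely topological divisibility question answered by \cite[Prob.~1D, p. 21]{GJ60}.

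First I would record the key translation. For $b \in A$ and $x \in Y_A$ we have $\zeta_A(b)(x) = 0$ iff $b + x = 0 + x$ iff $b \in x$, so the Yosida zero set $\zer(b) = \{x \mid b \in x\}$ coincides with the topological zero set $Z(\zeta_A(b)) = \{x \mid \zeta_A(b)(x) = 0\}$ of the continuous function $\zeta_A(b) \in C(Y_A)$. Applying this to $b = a$ and $b = s$, and abbreviating $g_a = \zeta_A(a)$ and $g_s = \zeta_A(s)$, the hypothesis $\zer(a) \subseteq \interi \zer(s)$ becomes the containment $Z(g_a) \subseteq \interi Z(g_s)$ in the compact Hausdorff space $Y_A$. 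The goal is then to produce $f \in C(Y_A)$ with $g_s = g_a f$, i.e.\ to show that $g_s$ lies in the principal ideal of $C(Y_A)$ generated by $g_a$.

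Second, I would carry out the pasting construction underlying the cited Gillman--Jerison problem. Since $Z(g_a) \subseteq \interi Z(g_s)$, the two open sets $\mathrm{coz}(g_a) = Y_A \setminus Z(g_a)$ and $\interi Z(g_s)$ cover $Y_A$: any point outside $\mathrm{coz}(g_a)$ lies in $Z(g_a)$ and hence in $\interi Z(g_s)$. I define $f$ to equal $g_s/g_a$ on $\mathrm{coz}(g_a)$, where $g_a$ is nonvanishing so the quotient is continuous, and to equal $0$ on $\interi Z(g_s)$. On the overlap $\mathrm{coz}(g_a) \cap \interi Z(g_s)$ the function $g_s$ vanishes, so $g_s/g_a = 0$ there and the two prescriptions agree; the pasting lemma for open covers then yields a single continuous $f \in C(Y_A)$. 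Finally $g_a f = g_s$ on all of $Y_A$, since $g_a \cdot (g_s/g_a) = g_s$ on $\mathrm{coz}(g_a)$ and $g_a \cdot 0 = 0 = g_s$ on $\interi Z(g_s)$. Unwinding the abbreviations gives $\zeta_A(s) = \zeta_A(a) f$, as required.

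The only delicate point is the continuity of $f$ across the frontier of $Z(g_a)$, and this is exactly what the interior hypothesis buys: without it the quotient $g_s/g_a$ need not extend continuously over $Z(g_a)$ (a dividend vanishing to lower order than the divisor is the obstruction). Because $\interi Z(g_s)$ contains $Z(g_a)$, the function $f$ is honestly $0$ on a neighborhood of $Z(g_a)$, so no limiting behavior of the quotient near $Z(g_a)$ ever enters and continuity is automatic. Since all of this takes place in $C(Y_A)$ with $Y_A \in \KHaus$, invoking \cite[Prob.~1D, p. 21]{GJ60} directly is an equally valid route; I have merely spelled out its proof to make the dependence on the interior hypothesis transparent.
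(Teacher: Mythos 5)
Your proof is correct and takes essentially the same route as the paper's: the paper likewise observes that $\zer(t)=Z(\zeta_A(t))$ for each $t\in A$, translates the hypothesis into $Z(\zeta_A(a))\subseteq\interi Z(\zeta_A(s))$, and then simply cites \cite[Prob.~1D, p.~21]{GJ60}. The only difference is that you unpack that citation with the standard pasting-lemma construction of the quotient function, which the paper treats as a black box.
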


\begin{proof}
Observe that for each $t\in A$ we have $\zer(t)=Z(\zeta_A(t))$. Therefore, $\zer(a) \subseteq \interi \zer(s)$ implies
$Z(\zeta_A(a)) \subseteq \interi Z(\zeta_A(s))$. Now apply \cite[Prob.~1D, p. 21]{GJ60}.
\end{proof}

\begin{lemma} \label{lemmaunion}
Let $(A,\Box)\in\mbal$, $x \in Y_A$, $S=(A \setminus \Box^{-1} x)^+$, and $a \in (\Box^{-1} x)^+$.
\begin{enumerate}
\item $\bigcap\{ \cozer(s) \mid s \in S \} = \bigcap\{ \overline{\cozer(s)} \mid s \in S \}$ for every $s \in S$.
\item $\overline{\cozer(s)} \cap \zer(a) \neq \varnothing$ for every $s \in S$.
\item The family $\{ \overline{\cozer(s)} \cap \zer(a) \mid s \in S \}$ has the finite intersection property.
\end{enumerate}
\end{lemma}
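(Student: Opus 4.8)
The plan is to reduce each of the three parts to locating, inside $S=(A\setminus\Box^{-1}x)^+$, elements whose cozero sets are well behaved. Recall from the proof of Lemma~\ref{lem: GJ} that $\zer(t)=Z(\zeta_A(t))$ for each $t\in A$, so $\cozer(t)=\mathrm{coz}(\zeta_A(t))$; since $\zeta_A$ is an injective $\ell$-algebra homomorphism it preserves positivity (so $\zeta_A(t)\ge 0$ when $t\ge 0$) and reflects order (from $\zeta_A((u-v)^+)=(\zeta_A(u)-\zeta_A(v))^+$ and injectivity). Two structural facts about $S$ drive the argument. First, $S$ is closed under finite meets: if $s_1,s_2\in S$, then $\Box(s_1\wedge s_2)=\Box s_1\wedge\Box s_2$ by (M1), and since $\Box s_1,\Box s_2\ge 0$ (Lemma~\ref{lem: properties}(3)) lie outside the prime ideal $x$, so does their meet, because $A/x\cong\mathbb R$ is totally ordered (Remark~\ref{rem: properties of primes}(1)); the same computation in the quotient gives $\cozer(s_1\wedge s_2)=\cozer(s_1)\cap\cozer(s_2)$. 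Second, for $s\in S$ and suitably small $\epsilon>0$, the shifted element $(s-\epsilon)^+$ again lies in $S$.

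For (1) the inclusion $\subseteq$ is immediate. For $\supseteq$, fix $s_0\in S$ and $y\in\bigcap_{s\in S}\overline{\cozer(s)}$; it suffices to produce $t\in S$ with $\overline{\cozer(t)}\subseteq\cozer(s_0)$, for then $y\in\overline{\cozer(t)}\subseteq\cozer(s_0)$. Take $t=(s_0-\epsilon)^+$ with $0<\epsilon<\zeta_A(\Box s_0)(x)$, noting this bound is positive since $\Box s_0\ge 0$ and $\Box s_0\notin x$. To check $t\in S$, combine (M3), giving $\Box t=(\Box(s_0-\epsilon))^+$, with Lemma~\ref{lem: properties}(5), giving $\Box(s_0-\epsilon)=\Box s_0-\epsilon(1-\Box 0)$. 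Writing $\beta:=\zeta_A(\Box 0)(x)\in\{0,1\}$ (as $\Box 0$ is idempotent by Lemma~\ref{lem: properties}(4)), evaluation at $x$ gives $\zeta_A(\Box t)(x)=(\zeta_A(\Box s_0)(x)-\epsilon(1-\beta))^+\ge(\zeta_A(\Box s_0)(x)-\epsilon)^+>0$, so $\Box t\notin x$ and $t\in S$. Finally, by Remark~\ref{rem: properties of primes}(4) we have $\cozer(t)=\{y:\zeta_A(s_0)(y)>\epsilon\}$, whence $\overline{\cozer(t)}\subseteq\{\zeta_A(s_0)\ge\epsilon\}\subseteq\cozer(s_0)$.

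Part (2) is the crux, and it is where Lemma~\ref{lem: GJ} enters. Suppose for contradiction that $\overline{\cozer(s)}\cap\zer(a)=\varnothing$ for some $s\in S$. Then $\zer(a)\subseteq Y_A\setminus\overline{\cozer(s)}=\interi\zer(s)$, so Lemma~\ref{lem: GJ} yields $f\in C(Y_A)$ with $\zeta_A(s)=\zeta_A(a)f$. As $Y_A$ is compact, $f$ is bounded, say $|f|\le N$ with $N\ge 0$; since $\zeta_A(a)\ge 0$ this forces $\zeta_A(s)\le N\zeta_A(a)=\zeta_A(Na)$, and because $\zeta_A$ reflects order we obtain $0\le s\le Na$. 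Now monotonicity (Lemma~\ref{lem: properties}(1)) together with (M5) gives $0\le\Box s\le\Box(Na)=(\Box N)(\Box a)$. But $\Box a\in x$ and $x$ is an ideal, so $\Box(Na)\in x$, and then the $\ell$-ideal property forces $\Box s\in x$, contradicting $s\in S$. I expect this to be the main obstacle: converting the purely topological containment $\zer(a)\subseteq\interi\zer(s)$ into the algebraic domination $s\le Na$ through Lemma~\ref{lem: GJ}, so that the multiplicative axiom (M5) can transport $\Box a\in x$ to $\Box s\in x$.

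Part (3) then combines the previous two. Given $s_1,\dots,s_n\in S$, set $s=s_1\wedge\dots\wedge s_n$, which lies in $S$ by meet-closure. Then $\cozer(s)=\bigcap_i\cozer(s_i)$, so $\overline{\cozer(s)}\subseteq\bigcap_i\overline{\cozer(s_i)}$, and by part (2) the set $\overline{\cozer(s)}\cap\zer(a)$ is nonempty and contained in $\bigl(\bigcap_i\overline{\cozer(s_i)}\bigr)\cap\zer(a)$. Hence every finite subfamily of $\{\overline{\cozer(s)}\cap\zer(a)\mid s\in S\}$ has nonempty intersection, which is the finite intersection property.
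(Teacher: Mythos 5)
Your proof is correct. Parts (1) and (3) follow the paper's own argument: in (1) you use the same witness $t=(s_0-\epsilon)^+$ and the same reduction to $\overline{\cozer(t)}\subseteq\cozer(s_0)$ (your only deviation is verifying $t\in S$ by a case split on the idempotent value $\zeta_A(\Box 0)(x)\in\{0,1\}$, where the paper instead derives $\Box 0\in x$ from Lemma~\ref{lem: cosets}(5) and the hypothesis $\Box a\in x$), and in (3) you use the same meet-closure of $S$ via (M1) and the total order on $A/x$. Part (2), which you rightly call the crux, is genuinely different after the shared appeal to Lemma~\ref{lem: GJ}. The paper approximates the factor $f$ uniformly by a sequence $\zeta_A(b_n)$ with $b_n\in A$ --- invoking Proposition~\ref{prop: SW} (that $C(Y_A)$ is the uniform completion of $A$) and norm-continuity of multiplication --- to obtain $s<\lambda a+\varepsilon$ in $A$, and then contradicts $\Box s+x>\varepsilon+x$ by coset arithmetic using Lemma~\ref{lem: cosets}(2) and (M5). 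You instead bound $f$ by a scalar $N$ (compactness of $Y_A$), get $\zeta_A(s)\le\zeta_A(Na)$ pointwise, and pull this back to $s\le Na$ in $A$ via the fact that the injective $\ell$-homomorphism $\zeta_A$ reflects order; then Lemma~\ref{lem: properties}(1), (M5), and the $\ell$-ideal property of $x$ give $\Box s\in x$ at once. Your route is shorter and dispenses with the completion machinery and all $\varepsilon$-management; what it requires in exchange is the order-reflection of $\zeta_A$, which you justify correctly (from $\zeta_A((u-v)^+)=(\zeta_A(u)-\zeta_A(v))^+$ and injectivity) and which the paper itself uses implicitly whenever it speaks of $(A,\Box)$ embedding into $(C(Y_A),\Box_{R_\Box})$.
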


\begin{proof}
(1). The inclusion $\subseteq$ is clear. To prove the reverse inclusion, it is sufficient to prove that for each $s \in S$ there is
$t \in S$ such that $\overline{\cozer(t)} \subseteq \cozer(s)$. Since $s \in S$, there is $\varepsilon \in \mathbb{R}$
with $\Box s + x > \varepsilon + x > 0 + x$. Set $t=(s-\varepsilon)^+$. Then $t \geq 0$ and
\[
\Box t = \Box (s-\varepsilon)^+  = (\Box (s - \varepsilon))^+
\]
by (M3). If $\Box t \in x$, then $\Box (s - \varepsilon) + x \le 0 + x$, so $\Box s - \varepsilon(1-\Box 0) + x \le 0 + x$ by
Lemma~\ref{lem: properties}(5). We have $\Box 0 \in x$ by Lemma~\ref{lem: cosets}(5) as $\Box a \in x$, so $\Box  s - \varepsilon \le 0 + x$,
and hence $\Box s + x \le \varepsilon + x$. The obtained contradiction shows $\Box t \notin x$, so $t \in S$. Let $ z \in \zer(s)$. Then
$z \in \zeta_A(s)^{-1}(- \varepsilon, \varepsilon)$, an open set. But $\zeta_A(s)^{-1}(- \varepsilon, \varepsilon) \subseteq \zer(t)$ by
definition of $t$ and Remark~\ref{rem: properties of primes}(3), so $\zer(s) \subseteq \interi \zer(t)$. Thus,
$\overline{\cozer(t)} \subseteq \cozer(s)$.

(2). Note that $\overline{\cozer(s)} \cap \zer(a) \neq \varnothing$ means that $\zer(a) \nsubseteq \interi \zer(s)$. We argue by contradiction.
Suppose $\zer(a) \subseteq \interi \zer(s)$. Then by Lemma~\ref{lem: GJ}, there is $f \in C(Y_A)$ such that $\zeta_A(s)=\zeta_A(a)f$ in
$C(Y_A)$. Since $C(Y_A)$ is the uniform completion of $A$ (see Proposition~\ref{prop: SW}), there is a sequence $\{ b_n \} \subseteq A$ such that $f= \lim \zeta_A(b_n)$.
It is well known that multiplication is continuous with respect to the norm, so we have $\lim \zeta_A(ab_n)= \zeta_A(a)f =\zeta_A(s)$.
Since $s \in S$, there is $\varepsilon>0$ such that $\Box s + x > \varepsilon + x$, so $(\Box s - \varepsilon) + x > 0 + x$.
There is $N$ such that $||s-ab_N||< \varepsilon$. Therefore, $s < ab_N+\varepsilon$.
Take $0 < \lambda \in \mathbb{R}$ such that $b_N \leq \lambda$. Then $s < \lambda a + \varepsilon$, so by Lemmas~\ref{lem: properties}(1) and \ref{lem: cosets}(2), and (M5),
\[
\Box s + x \leq \Box (\lambda a +\varepsilon) + x = (\Box (\lambda a) + \varepsilon) + x =  (\Box\lambda\Box a + \varepsilon) + x.
\]
But $\Box a \in x$,
so $\Box s + x \le \varepsilon + x$, contradicting $\varepsilon + x < \Box s +x$.

(3). We first show that the intersection of any two members of the family contains another member of the family. Let $s,t \in S$. Then $\Box s, \Box t \notin x$. Since $x$ is a maximal $\ell$-ideal, $A/x \cong \mathbb{R}$ is totally ordered, so
\[
(\Box s \wedge \Box t) + x = \min\{ \Box s + x, \Box t + x\} \ne 0 + x,
\]
and hence $\Box s \wedge \Box t \notin x$.
By (M1), this shows $\Box (s \wedge t) \notin x$, which gives $s \wedge t \in S$. Since $\cozer(s\wedge t)= \cozer(s) \cap \cozer(t)$, we have:
\begin{equation*}
\begin{array}{r l}
(\overline{\cozer(s)}\cap \zer(a)) \cap (\overline{\cozer(t)}\cap \zer(a)) & = \overline{\cozer(s)} \cap \overline{\cozer(t)} \cap \zer(a) \\
& \supseteq  \overline{\cozer(s) \cap \cozer(t)} \cap \zer(a)\\
& = \overline{\cozer(s\wedge t)} \cap \zer(a).
\end{array}
\end{equation*}
Because $s\wedge t \in S$, we have that $\overline{\cozer(s\wedge t)} \cap \zer(a)$ is in the family. An easy induction argument then completes the proof because every element of the family is nonempty by (2).
\end{proof}

\begin{proposition} \label{prop: difficult}
Let $(A, \Box) \in \mbal$ and $x \in Y_A$. Then $(\Box^{-1} x)^+ = \bigcup \{ y^+ \mid y \in R_\Box[x] \}$.
\end{proposition}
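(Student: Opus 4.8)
The plan is to prove the two inclusions separately, with essentially all the work concentrated in the forward inclusion $(\Box^{-1}x)^+ \subseteq \bigcup\{y^+ \mid y \in R_\Box[x]\}$; the reverse inclusion is immediate. Indeed, if $y \in R_\Box[x]$, then $y^+ \subseteq \Box^{-1}x$ by Definition~\ref{def: R_box on Y_A}, and every element of $y^+$ is nonnegative, so $y^+ \subseteq (\Box^{-1}x)^+$. Taking the union over $y \in R_\Box[x]$ gives $\supseteq$.

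For the forward inclusion, fix $a \in (\Box^{-1}x)^+$, so $a \ge 0$ and $\Box a \in x$. I would first recast the goal in a form to which Lemma~\ref{lemmaunion} applies. Finding $y \in R_\Box[x]$ with $a \in y^+$ amounts to producing a maximal $\ell$-ideal $y$ such that (i) $a \in y$, i.e. $y \in \zer(a)$, and (ii) $y^+ \subseteq \Box^{-1}x$, i.e. $x R_\Box y$. Setting $S = (A \setminus \Box^{-1}x)^+$ as in Lemma~\ref{lemmaunion}, condition (ii) is equivalent to $y \cap S = \varnothing$, since $S$ consists exactly of the nonnegative elements $b$ with $\Box b \notin x$; in topological terms, (ii) says $y \in \cozer(s)$ for every $s \in S$. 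Thus the task is to exhibit a point of $\zer(a) \cap \bigcap\{\cozer(s) \mid s \in S\}$.

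The key step is then a compactness argument on the Yosida space. The sets $\overline{\cozer(s)} \cap \zer(a)$ are closed in the compact space $Y_A$, are nonempty for each $s \in S$ by Lemma~\ref{lemmaunion}(2), and form a family with the finite intersection property by Lemma~\ref{lemmaunion}(3). The indexing family is itself nonempty, since $1 \in S$ (because $\Box 1 = 1 \notin x$ by Lemma~\ref{lem: properties}(2) and properness of $x$). Compactness therefore yields a point $y$ in $\bigcap\{\overline{\cozer(s)} \cap \zer(a) \mid s \in S\}$, which by distributivity and Lemma~\ref{lemmaunion}(1) equals $\zer(a) \cap \bigcap\{\cozer(s) \mid s \in S\}$. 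This $y$ lies in $\zer(a)$, so $a \in y^+$, and satisfies $y \cap S = \varnothing$, whence $y^+ \subseteq \Box^{-1}x$, i.e. $y \in R_\Box[x]$. This completes $\subseteq$.

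I expect the only genuine obstacle to be already discharged inside Lemma~\ref{lemmaunion}: its parts (1)--(3) (closure under passage to $\overline{\cozer(s)}$, nonemptiness via Lemma~\ref{lem: GJ}, and the finite intersection property via (M1)) are where the modal axioms and the uniform completion really enter. Granting that lemma, the proposition reduces to a clean finite-intersection-property argument, and the points to get right are the bookkeeping translating the order-theoretic condition $y^+ \subseteq \Box^{-1}x$ into the topological condition that $y$ avoids every $\cozer(s)$, together with checking that the family is nonempty so that compactness genuinely applies.
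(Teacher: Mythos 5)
Your proposal is correct and follows essentially the same route as the paper's proof: both handle the reverse inclusion directly from the definition of $R_\Box$, and both obtain the forward inclusion by producing a point of $\zer(a) \cap \bigcap\{\cozer(s) \mid s \in S\}$ via Lemma~\ref{lemmaunion}(1)--(3) together with compactness of $Y_A$. Your explicit check that the indexing family is nonempty (since $\Box 1 = 1 \notin x$ gives $1 \in S$) is a small but worthwhile refinement that the paper leaves implicit.
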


\begin{proof}
The right-to-left inclusion follows from the definition of $R_\Box$. For the left-to-right inclusion, let $a \in (\Box^{-1} x)^+$. By Lemma~\ref{lemmaunion}(1),
\[
\bigcap \{\cozer(s) \cap \zer(a) \mid s \in S \} = \bigcap \{ \overline{\cozer(s)} \cap \zer(a) \mid s \in S \}.
\]
By Lemma~\ref{lemmaunion}(3) and compactness of $Y_A$, this intersection is nonempty. Therefore, there is
$y \in \bigcap\{ \cozer(s) \cap \zer(a) \mid s \in S \}$. This means that
$a \in y$ and $y \cap S= \varnothing$, so $y^+ \subseteq \Box^{-1} x$. Thus, $a$ is contained in some
$y \in R_\Box[x]$, completing the proof.
\end{proof}

\begin{lemma} \label{rinvclosed}
Let $(A, \Box) \in \mbal$.
\begin{enumerate}
\item $R_\Box^{-1}[\zer(a)]=\zer(\Box a)$ for every $0\le a \in A$.
\item $D_A = \zer(\Box 0)$.
\end{enumerate}
\end{lemma}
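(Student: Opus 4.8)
The plan is to prove part (1) by a double inclusion, deducing the nontrivial direction from Proposition~\ref{prop: difficult}, and then to obtain part (2) as the special case $a=0$.

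First I would unwind the definitions for part (1). Fix $0 \le a \in A$ and $x \in Y_A$. By definition, $x \in R_\Box^{-1}[\zer(a)]$ iff there is $y \in Y_A$ with $a \in y$ (that is, $y \in \zer(a)$) and $x R_\Box y$, i.e.\ $y^+ \subseteq \Box^{-1}x$. For the easy inclusion $R_\Box^{-1}[\zer(a)] \subseteq \zer(\Box a)$, suppose such a $y$ exists. Since $a \ge 0$ we have $a \in y^+$, and therefore $a \in \Box^{-1}x$; that is, $\Box a \in x$, so $x \in \zer(\Box a)$. This direction uses nothing beyond the definition of $R_\Box$ together with $a = a^+ \in y^+$.

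The reverse inclusion $\zer(\Box a) \subseteq R_\Box^{-1}[\zer(a)]$ is where the content sits, and here I would invoke Proposition~\ref{prop: difficult}. Suppose $\Box a \in x$, so $a \in \Box^{-1}x$, and since $a \ge 0$ this gives $a \in (\Box^{-1}x)^+$. By Proposition~\ref{prop: difficult}, $(\Box^{-1}x)^+ = \bigcup\{y^+ \mid y \in R_\Box[x]\}$, so $a \in y^+$ for some $y$ with $x R_\Box y$. In particular $a \in y$, so $y \in \zer(a)$, whence $x \in R_\Box^{-1}[\zer(a)]$. Combining the two inclusions yields (1). For (2) I would simply apply (1) with $a = 0$: since $0$ lies in every $\ell$-ideal we have $\zer(0) = Y_A$, and $R_\Box^{-1}[Y_A] = D_A$ by definition, so (1) gives $D_A = R_\Box^{-1}[\zer(0)] = \zer(\Box 0)$.

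The main obstacle is the containment $\zer(\Box a) \subseteq R_\Box^{-1}[\zer(a)]$, which rests entirely on Proposition~\ref{prop: difficult}; the genuinely hard input is the argument behind that proposition (the finite intersection property of Lemma~\ref{lemmaunion} combined with compactness of $Y_A$). Once Proposition~\ref{prop: difficult} is available, both parts of the present lemma are short, with part (2) being an immediate specialization of part (1).
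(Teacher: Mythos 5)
Your proof is correct and matches the paper's own argument essentially step for step: the easy inclusion from the definition of $R_\Box$ together with $a = a^+ \in y^+$, the hard inclusion via Proposition~\ref{prop: difficult}, and part (2) as the specialization $a=0$ with $\zer(0)=Y_A$. Nothing to add.
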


\begin{proof}
(1). Let $x \in R_\Box^{-1}[\zer(a)]$. Then there is $y \in Y_A$ such that $x R_\Box y$ and $a \in y$. Therefore, $a \in y^+ \subseteq \Box^{-1} x$.
Thus, $\Box a \in x$, and so $x \in \zer(\Box a)$.

For the other inclusion, let $x \in \zer(\Box a)$. Since $\Box a \in x$ and $\Box a \geq 0$, we have $a \in (\Box^{-1}x)^+$. By
Proposition~\ref{prop: difficult}, there is $y \in Y_A$ such that $x R_\Box y$ and $a \in y$. Thus, $x \in R_\Box^{-1}[\zer(a)]$.

(2). This follows from (1) by setting $a = 0$ and using $Y_A = \zer(0)$.
\end{proof}

We will use Lemma~\ref{rinvclosed} to prove that $R_\Box^{-1}[F]$ is closed for each closed subset $F$ of $Y_A$. For this we require Esakia's lemma, which is an important tool in modal logic (see, e.g., \cite[Sec.~10.3]{CZ97}). The original statement is for descriptive frames,
but it has a straightforward generalization to the setting of compact Hausdorff frames (see \cite[Lem.~2.17]{BBH15}). We call
a relation $R$ on a compact Hausdorff space $X$ \emph{point-closed} if $R[x]$ is closed for each $x\in X$.

\begin{lemma} [Esakia's lemma]
If $R$ is a point-closed relation on a compact Hausdorff space $X$, then for each $($nonempty$)$ down-directed family $\{ F_i \mid i\in I \}$ of closed
subsets of $X$ we have
\[
R^{-1}\left[\bigcap\{ F_i \mid i\in I\}\right] = \bigcap \{ R^{-1}[F_i] \mid i\in I\}.
\]
\end{lemma}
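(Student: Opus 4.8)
The plan is to prove the nontrivial inclusion by a finite-intersection-property argument using compactness, since the reverse inclusion is immediate from the definition of $R^{-1}$.

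First I would dispose of the easy inclusion $R^{-1}\left[\bigcap_i F_i\right] \subseteq \bigcap_i R^{-1}[F_i]$: if $x$ lies in the left-hand side, there is $y$ with $xRy$ and $y \in F_i$ for every $i$, whence $x \in R^{-1}[F_i]$ for every $i$. This holds for any relation and uses neither point-closedness nor down-directedness.

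For the substantive inclusion $\bigcap_i R^{-1}[F_i] \subseteq R^{-1}\left[\bigcap_i F_i\right]$, fix $x \in \bigcap_i R^{-1}[F_i]$; the goal is to produce a single $y \in R[x] \cap \bigcap_i F_i$. I would consider the family $\mathcal{C} = \{ R[x] \cap F_i \mid i \in I \}$ of subsets of $X$. Each member is closed, since $R[x]$ is closed by the point-closed hypothesis and each $F_i$ is closed; and each member is nonempty, precisely because $x \in R^{-1}[F_i]$ supplies a witness in $R[x] \cap F_i$. The key step is to verify that $\mathcal{C}$ has the finite intersection property, and this is exactly where down-directedness of $\{F_i\}$ enters: given $i_1, \dots, i_n \in I$, down-directedness yields $j \in I$ with $F_j \subseteq F_{i_1} \cap \cdots \cap F_{i_n}$, so that $\varnothing \neq R[x] \cap F_j \subseteq \bigcap_{k=1}^{n}(R[x] \cap F_{i_k})$. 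Since $X$ is compact and $\mathcal{C}$ is a family of closed sets with the finite intersection property, $\bigcap \mathcal{C} = R[x] \cap \bigcap_i F_i$ is nonempty. Any $y$ in this intersection satisfies $xRy$ and $y \in \bigcap_i F_i$, giving $x \in R^{-1}\left[\bigcap_i F_i\right]$, as desired.

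The argument is short, and the main content is the combined use of down-directedness (to secure the finite intersection property) together with compactness (to pass from finite to arbitrary intersections); the observation that point-closedness is exactly what guarantees each $R[x] \cap F_i$ is closed is the other ingredient that makes the compactness argument applicable. I do not expect a genuine obstacle beyond this bookkeeping, which is why the lemma is quoted rather than reproved in full.
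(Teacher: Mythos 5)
Your proof is correct. Note that the paper does not actually prove this lemma---it quotes it and defers to \cite{BBH15} (Lem.~2.17)---and your argument (down-directedness giving the finite intersection property of the closed sets $R[x]\cap F_i$, point-closedness guaranteeing they are closed, and compactness yielding a common point) is precisely the standard proof, so you have correctly supplied the details the paper omits.
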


\begin{remark} \label{remark: Esakia and zeros}
Let $(A,\Box)\in\mbal$ and $S$ be a set of nonnegative elements of $A$ closed under addition.
Since $\zer(a+b) \subseteq \zer(a) \cap \zer(b)$ for each $a,b \in S$, we have that
$\{ \zer(a) \mid a \in S \}$ is a down-directed family of closed subsets of $Y_A$.
Then, by Esakia's lemma and Lemma~\ref{rinvclosed}, we have:
\begin{equation*}
\begin{split}
R_\Box^{-1}[\zer(S)] & = R_\Box^{-1} \left[ \bigcap \{\zer(a) \mid a \in S \} \right] = \bigcap \{ R_\Box^{-1}[\zer(a)] \mid a \in S \}\\
& = \bigcap \{ \zer(\Box a) \mid a \in S \} = \zer(\Box S).
\end{split}
\end{equation*}
In particular, for an $\ell$-ideal $I$, since $Z_\ell(I) = Z_\ell(I^+)$, we have
\[
R_\Box^{-1} \zer(I)=R_\Box^{-1} \zer(I^+)=\bigcap \{ \zer(\Box a) \mid a \in I^+ \}.
\]
\end{remark}

\begin{proposition}\label{prop: closed}
$R_\Box^{-1}[F]$ is closed for every closed subset $F$ of $Y_A$.
\end{proposition}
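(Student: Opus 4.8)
The plan is to reduce the claim, via the description of the Yosida topology, to an intersection of zero sets, and then invoke the machinery already assembled in Remark~\ref{remark: Esakia and zeros}. Recall that the closed subsets of $Y_A$ are exactly the sets of the form $\zer(I)$ where $I$ is an $\ell$-ideal of $A$. So if $F$ is an arbitrary closed subset of $Y_A$, I would begin by choosing an $\ell$-ideal $I$ with $F = \zer(I)$; this transports the problem from an abstract closed set to one presented concretely as a zero set of an $\ell$-ideal.

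Next I would apply Remark~\ref{remark: Esakia and zeros} directly. Since $I^+$ is a set of nonnegative elements closed under addition, the remark gives
\[
R_\Box^{-1}[F] = R_\Box^{-1}[\zer(I)] = \bigcap \{ \zer(\Box a) \mid a \in I^+ \}.
\]
Now each $\zer(\Box a)$ is a zero set in $Y_A$ and hence closed, so the right-hand side is an intersection of closed sets and therefore closed. This completes the argument, and the proof is in fact quite short once the preceding results are in hand.

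The point worth emphasizing is that essentially all of the genuine difficulty has already been discharged by the earlier development, and this proposition is the payoff. The real work lies upstream: Lemma~\ref{rinvclosed}(1), which identifies $R_\Box^{-1}[\zer(a)]$ with $\zer(\Box a)$ for $a \ge 0$, rests on the delicate Proposition~\ref{prop: difficult}; and the passage from single elements to the full ideal $I$ in Remark~\ref{remark: Esakia and zeros} uses Esakia's lemma, which is applicable precisely because $R_\Box$ is point-closed by Proposition~\ref{prop: R[x]}. Thus the only thing I need to supply here is the observation that every closed set is $\zer(I)$ for some $\ell$-ideal $I$, after which closedness of $R_\Box^{-1}[F]$ falls out as an intersection of zero sets. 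I do not anticipate any obstacle at this stage; the main subtlety to double-check is merely that the formula from the remark is being applied to $I^+$ (the nonnegative part), which is exactly the set under which the remark is stated.
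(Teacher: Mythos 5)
Your proof is correct and is essentially identical to the paper's: both write $F = \zer(I)$ for an $\ell$-ideal $I$, invoke Remark~\ref{remark: Esakia and zeros} to get $R_\Box^{-1}[\zer(I)] = \bigcap \{ \zer(\Box a) \mid a \in I^+ \}$, and conclude by noting this is an intersection of closed sets. Your accompanying remarks about where the real difficulty lies (Proposition~\ref{prop: difficult}, Esakia's lemma, point-closedness) accurately reflect the paper's structure as well.
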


\begin{proof}
Since $F$ is a closed subset of $Y_A$, there is an $\ell$-ideal $I$ such that $F=\zer(I)$.
By Remark~\ref{remark: Esakia and zeros},
\[
R_\Box^{-1} \zer(I)=\bigcap \{ \zer(\Box a) \mid a \in I^+ \},
\]
which is closed because it is an intersection of closed subsets of $Y_A$.
\end{proof}

\begin{lemma} \label{lemmadiamond}
If $\Diamond a \in x$ and $x R_\Box y$, then $a^+ \in y$.
\end{lemma}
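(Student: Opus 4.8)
The plan is to argue by contraposition: assuming $x R_\Box y$ together with $a^+ \notin y$, I will show $\Diamond a \notin x$. Since $y \in R_\Box[x]$ witnesses that $x \in D_A$, Lemma~\ref{lem: cosets}(1) gives $\Box 0 \in x$, a fact I will use repeatedly to discard the $\Box 0$ correction terms that appear in the axioms once I pass to the quotient $A/x \cong \mathbb{R}$.

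First I would record what $a^+ \notin y$ buys us. By Remark~\ref{rem: properties of primes}(2) and~(5), $a^+ \notin y$ forces $a + y > 0 + y$, so there is a real $\mu > 0$ with $a + y = \mu + y$. Fix any $0 < \varepsilon < \mu$ and set $c = a - \varepsilon$; then $c + y = (\mu - \varepsilon) + y > 0 + y$, so by Remark~\ref{rem: properties of primes}(5) the nonnegative element $c^- = (-c)^+$ lies in $y$. Hence $c^- \in y^+$, and since $x R_\Box y$ means $y^+ \subseteq \Box^{-1} x$ (Definition~\ref{def: R_box on Y_A}), we obtain $\Box(c^-) \in x$.

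The heart of the argument is then to convert $\Box(c^-) \in x$ into a statement about $\Box(1-a)$, since $\Diamond a = 1 - \Box(1-a)$. Using (M3) I rewrite $\Box(c^-) = \Box((-c)^+) = (\Box(-c))^+$, so $(\Box(-c))^+ \in x$, which by Remark~\ref{rem: properties of primes}(4) says $\Box(-c) + x \le 0 + x$. Now $-c = (1-a) + (\varepsilon - 1)$, and Lemma~\ref{lem: properties}(5) gives $\Box(-c) = \Box(1-a) + (\varepsilon - 1)(1 - \Box 0)$. Passing to $A/x$ and using $\Box 0 \in x$ to replace $(1 - \Box 0)$ by $1$, the inequality becomes $\Box(1-a) + x \le (1 - \varepsilon) + x$. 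Because $\varepsilon > 0$, this is incompatible with $\Box(1-a) + x = 1 + x$, i.e.\ with $\Diamond a = 1 - \Box(1-a) \in x$. Thus $\Diamond a \notin x$, completing the contrapositive.

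I expect the main obstacle to be the choice of the witness $c^- = (a - \varepsilon)^-$: the naive choice $a^-$ (which lies in $y$ automatically once $a^+ \notin y$) only yields the non-strict inequality $\Box(1-a) + x \le 1 + x$ and hence no contradiction, so the strict gap $\mu > \varepsilon > 0$ is what pushes $\Box(1-a)$ strictly below $1$. The remaining bookkeeping---applying (M3), Lemma~\ref{lem: properties}(5), and the coset arithmetic modulo $x$ using $\Box 0 \in x$---is routine.
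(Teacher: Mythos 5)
Your proof is correct and takes essentially the same route as the paper's: contraposition, extracting a positive real $\mu$ with $a+y=\mu+y$, pushing the nonnegative element $(\varepsilon-a)^+\in y^+$ through $R_\Box$ and (M3), and finishing with coset arithmetic modulo $x$ using $\Box 0\in x$. The only cosmetic difference is the last step, where the paper concludes via $\Diamond a + x = -\Box(-a)+x \ge \lambda + x > 0+x$ (Lemma~\ref{lem: cosets}(3)--(4)) instead of bounding $\Box(1-a)+x$ strictly below $1+x$, and where the paper uses the witness $\lambda=\mu$ itself (which works, so your restriction to $\varepsilon<\mu$ is mildly over-cautious, though harmless).
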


\begin{proof}
Suppose that $x R_\Box y$ and $a^+ \notin y$. Then $a + y > 0 + y$, so there is
$0 < \lambda \in \mathbb{R}$ such that $a + y = \lambda + y$. Therefore, $\lambda-a \in y$, so $(\lambda-a)^+ \in y$.
Since $y^+ \subseteq \Box^{-1}x$, we have $(\Box(\lambda-a))^+ \in x$ by (M3), so $(\lambda + \Box (- a))^+ \in x$ by
Lemma~\ref{lem: cosets}(3). Thus,
$(\lambda + \Box(-a)) + x \le 0 + x$, so $\lambda + x \le -\Box(-a) + x$, and hence
$\lambda + x \le \Diamond a + x$ by Lemma~\ref{lem: cosets}(4).
Since $\lambda + x > 0 + x$, this shows $\Diamond a \notin x$.
\end{proof}

\begin{lemma}\label{lem: R^{-1}}
$R_\Box^{-1}[\cozer(a)]=\cozer(\Diamond a)$ for every $0 \le a \in A$.
\end{lemma}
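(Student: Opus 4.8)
The statement is equivalent, by passing to complements, to the assertion that for $0\le a\in A$ and $x\in Y_A$ one has $\Diamond a\in x$ if and only if $R_\Box[x]\subseteq\zer(a)$; this is what I would prove. The forward implication is immediate: if $\Diamond a\in x$ and $xR_\Box y$, then $a^+=a\in y$ by Lemma~\ref{lemmadiamond} (using $a\ge 0$), so $y\in\zer(a)$, whence $R_\Box[x]\subseteq\zer(a)$. Equivalently, this is the inclusion $R_\Box^{-1}[\cozer(a)]\subseteq\cozer(\Diamond a)$.

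For the converse I would argue contrapositively: assuming $\Diamond a\notin x$, I must produce $y$ with $xR_\Box y$ and $a\notin y$, that is, show $R_\Box[x]\cap\cozer(a)\ne\varnothing$. First I record two facts. By Lemma~\ref{lem: cosets}(6), $\Diamond a\notin x$ forces $\Box 0\in x$, so $x\in D_A$. Next, since $a\ge 0$ gives $1-a\le 1$, order-preservation (Lemma~\ref{lem: properties}(1) and (2)) yields $\Diamond a=1-\Box(1-a)\ge 0$; as $\Diamond a\notin x$, there is a real $\mu>0$ with $\Diamond a+x=\mu+x$, and then Lemma~\ref{lem: cosets}(4) gives $\Box(-a)+x=-\mu+x$.

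The substance of the proof, and the step I expect to be the main obstacle, is a finite-intersection argument modeled on Lemma~\ref{lemmaunion} and Proposition~\ref{prop: difficult}. Put $S=(A\setminus\Box^{-1}x)^+$, so that $R_\Box[x]=\bigcap_{s\in S}\cozer(s)=\bigcap_{s\in S}\overline{\cozer(s)}$, the second equality by Lemma~\ref{lemmaunion}(1). I would fix $0<\varepsilon\le\mu$ and set $F=\zer((a-\varepsilon)^-)=\{\,y\mid a+y\ge\varepsilon+y\,\}$, a closed subset contained in $\cozer(a)$ because $\varepsilon>0$. The crucial computation is that $\Box((a-\varepsilon)^-)\in x$: writing $(a-\varepsilon)^-=(\varepsilon-a)^+$ and using (M3), (M4), $\Box 0\in x$, and $\Box(-a)+x=-\mu+x$, one finds $\Box((a-\varepsilon)^-)+x=\max(\varepsilon-\mu,0)+x=0+x$. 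Granting this, I would show $\overline{\cozer(s)}\cap F\ne\varnothing$ for each $s\in S$ exactly as in Lemma~\ref{lemmaunion}(2): if instead $\zer((a-\varepsilon)^-)\subseteq\interi\zer(s)$, then Lemma~\ref{lem: GJ} yields $f\in C(Y_A)$ with $\zeta_A(s)=\zeta_A((a-\varepsilon)^-)f$; approximating $f$ by some $\zeta_A(b_N)$ and choosing $\lambda\ge b_N$ gives $s<\lambda(a-\varepsilon)^-+\eta$ with $\Box s+x>\eta+x$, and then (M5), Lemma~\ref{lem: cosets}(2), and $\Box((a-\varepsilon)^-)\in x$ force $\Box s+x\le\eta+x$, a contradiction.

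Finally, since $S$ is closed under $\meet$ and $\overline{\cozer(s\meet t)}\subseteq\overline{\cozer(s)}\cap\overline{\cozer(t)}$, the family $\{\,\overline{\cozer(s)}\cap F\mid s\in S\,\}$ has the finite intersection property; compactness of $Y_A$ then provides a point $y\in\bigcap_{s\in S}\overline{\cozer(s)}\cap F=R_\Box[x]\cap F\subseteq R_\Box[x]\cap\cozer(a)$, which is the required witness. The two delicate points are the evaluation $\Box((a-\varepsilon)^-)\in x$ and carefully carrying the hypothesis $\Box 0\in x$ through the repeated uses of Lemma~\ref{lem: cosets}.
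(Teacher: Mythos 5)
Your proof is correct and takes essentially the same approach as the paper's: the forward inclusion via Lemma~\ref{lemmadiamond}, and the converse by passing from $\Diamond a \notin x$ to $\Box 0 \in x$ and $\Box(-a)+x=-\mu+x$ via Lemma~\ref{lem: cosets}, and then realizing the element $(a-\varepsilon)^-=(\varepsilon-a)^+\in(\Box^{-1}x)^+$ inside some $y\in R_\Box[x]$. The only difference is that the paper obtains such a $y$ by simply citing Proposition~\ref{prop: difficult} (taking $\varepsilon=\mu$, so that $(a-\mu)^-=(-a-\lambda)^+$ with $\lambda=-\mu$), whereas you re-derive that proposition for this particular element by re-running the finite-intersection argument of Lemma~\ref{lemmaunion}; this is redundant but harmless.
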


\begin{proof}
For the left-to-right inclusion, suppose $x \notin \cozer(\Diamond a)$. Then $\Diamond a \in x$. Consider $y \in R_\Box[x]$. By
Lemma \ref{lemmadiamond}, $a=a^+ \in y$, so $y \notin \cozer (a)$. Therefore, $x \notin R_\Box^{-1}[\cozer(a)]$.

For the right-to-left inclusion, let $x \in \cozer(\Diamond a)$. Then $\Diamond a \notin x$, so $\Box 0 \in x$ by Lemma~\ref{lem: cosets}(6). Therefore, by Lemma~\ref{lem: cosets}(4), $0 + x \ne \Diamond a + x = -\Box(-a) + x$,
and hence $\Box(-a) \notin x$. Since $-a \leq 0$, we have $\Box(-a) + x \leq \Box 0 + x = 0 + x$.
Thus, there is $\lambda \in \mathbb{R}$ with $\lambda < 0$ and $\Box(-a)+x= \lambda +x$, so $\Box(-a) - \lambda \in x$.
By Lemma~\ref{lem: cosets}(3), we have
\[
\Box((-a-\lambda)^+) \in x  \mbox{ iff } (\Box(-a)-\lambda)^+ \in x.
\]
Consequently, by Proposition~\ref{prop: difficult},
\[
(-a-\lambda)^+ \in (\Box^{-1}x)^+ = \bigcup \{ y^+ \mid y \in R_\Box[x] \}.
\]
Hence, there is $y \in R_\Box[x]$ such that $(-a-\lambda)^+ \in y$. This means that $(-a-\lambda)+y \leq 0+y$, so  $a+y \geq -\lambda+y>0+y$. Therefore, $a \notin y$, and so $y \in \cozer(a)$. Thus, $x \in R_\Box^{-1}[\cozer(a)]$.
\end{proof}

\begin{proposition}\label{prop: open}
$R_\Box^{-1}[U]$ is open for every open subset $U$ of $Y_A$.
\end{proposition}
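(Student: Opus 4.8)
The plan is to reduce to the basic open sets already handled by Lemma~\ref{lem: R^{-1}}. The key structural point is that $R_\Box^{-1}$ commutes with arbitrary unions: for any family $\{U_i\}$ of subsets of $Y_A$ we have $R_\Box^{-1}[\bigcup_i U_i] = \bigcup_i R_\Box^{-1}[U_i]$, since $x \in R_\Box^{-1}[U]$ simply means that $R_\Box[x]$ meets $U$. Hence it suffices to exhibit a basis of $Y_A$ all of whose members have open $R_\Box$-preimages, and then write an arbitrary open set as a union of basic ones.

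Now, Lemma~\ref{lem: R^{-1}} gives exactly what is needed, namely that $R_\Box^{-1}[\cozer(a)] = \cozer(\Diamond a)$ is open, but only for $0 \le a$. So the first thing I would do is upgrade the standard basis $\{\cozer(a) \mid a \in A\}$ to the subfamily indexed by nonnegative elements. For this, recall that each $x \in Y_A$ is a prime $\ell$-ideal, so that $a \in x$ iff both $a^+ \in x$ and $a^- \in x$: the forward direction uses $a^+, a^- \le |a| = a^+ + a^-$ together with $|a| \in x$ and the fact that $x$ is an $\ell$-ideal, while the converse is immediate since $x$ is closed under addition. It follows that $a \notin x$ iff $a^+ \notin x$ or $a^- \notin x$, i.e.
\[
\cozer(a) = \cozer(a^+) \cup \cozer(a^-).
\]
Since $a^+, a^- \ge 0$, this exhibits every basic cozero set as a union of cozero sets of nonnegative elements, so $\{\cozer(a) \mid 0 \le a \in A\}$ is again a basis for the topology on $Y_A$.

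Putting these together, given an open $U \subseteq Y_A$ I would write $U = \bigcup_i \cozer(a_i)$ with each $a_i \ge 0$, and then compute
\[
R_\Box^{-1}[U] = \bigcup_i R_\Box^{-1}[\cozer(a_i)] = \bigcup_i \cozer(\Diamond a_i)
\]
using the union-distributivity of $R_\Box^{-1}$ and Lemma~\ref{lem: R^{-1}}. This is open as a union of open sets, completing the proof. I do not expect a serious obstacle here; the only subtlety is the sign restriction $0 \le a$ in Lemma~\ref{lem: R^{-1}}, which is precisely what the decomposition $\cozer(a) = \cozer(a^+) \cup \cozer(a^-)$ is designed to circumvent.
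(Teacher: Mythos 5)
Your proposal is correct and takes essentially the same route as the paper: both arguments reduce to Lemma~\ref{lem: R^{-1}} by noting that $R_\Box^{-1}$ commutes with arbitrary unions and then writing an arbitrary open set as a union of cozero sets of nonnegative elements. The only immaterial difference is in the bookkeeping of that reduction: the paper writes an open set as $\cozer(I)$ for an $\ell$-ideal $I$ and uses $\cozer(I)=\bigcup\{\cozer(a)\mid a\in I,\ a\ge 0\}$, whereas you decompose each basic cozero set as $\cozer(a)=\cozer(a^+)\cup\cozer(a^-)$.
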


\begin{proof}
Open subsets of $Y_A$ are of the form $\cozer(I)=\bigcup \{ \cozer (a) \mid a \in I \}$ for some $\ell$-ideal $I$.
Since $\cozer(I)=\bigcup \{ \cozer (a) \mid a \in I, \, a \geq 0 \}$ and $R_\Box^{-1}$ commutes with arbitrary unions,
by Lemma~\ref{lem: R^{-1}}, we have
\begin{equation*}
\begin{split}
R_\Box^{-1}\cozer(I) & =R_\Box^{-1}\bigcup \{ \cozer (a) \mid a \in I, \, a \geq 0 \}\\
& =\bigcup \{ R_\Box^{-1}\cozer (a) \mid a \in I, \, a \geq 0 \}\\
& =\bigcup \{ \cozer (\Diamond a) \mid a \in I, \, a \geq 0 \},
\end{split}
\end{equation*}
which is open because it is a union of open subsets of $Y_A$.
\end{proof}

Putting Propositions~\ref{prop: R[x]}, \ref{prop: closed}, and \ref{prop: open}
together yields:

\begin{theorem}\label{thm: *}
If $(A,\Box)\in\mbal$, then $(Y_A,R_\Box)\in\KHK$.
\end{theorem}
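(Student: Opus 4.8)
The plan is to verify directly that $R_\Box$ satisfies the three conditions in Definition~\ref{def:cont for KHaus} for a continuous relation on the compact Hausdorff space $Y_A$. Recall first that $Y_A \in \KHaus$ by Gelfand duality, so the underlying space is already of the right kind; it therefore remains only to check continuity of the relation $R_\Box$.

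Each of the three defining conditions is precisely one of the propositions just established. Condition (1), that $R_\Box[x]$ is closed for every $x \in Y_A$, is Proposition~\ref{prop: R[x]}. Condition (2), that $R_\Box^{-1}[F]$ is closed whenever $F$ is closed, is Proposition~\ref{prop: closed}. Condition (3), that $R_\Box^{-1}[U]$ is open whenever $U$ is open, is Proposition~\ref{prop: open}. Assembling these three facts shows that $R_\Box$ is a continuous relation on $Y_A$, and hence $(Y_A, R_\Box) \in \KHK$.

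At this stage the theorem is thus a bookkeeping step: the genuine difficulty has already been absorbed into the preceding propositions, and in particular into Proposition~\ref{prop: difficult}, which identifies $(\Box^{-1} x)^+$ with $\bigcup\{ y^+ \mid y \in R_\Box[x] \}$. That identity is the engine behind Lemma~\ref{rinvclosed} and Lemma~\ref{lem: R^{-1}}, which in turn feed Propositions~\ref{prop: closed} and~\ref{prop: open}; its proof rests on the finite intersection property from Lemma~\ref{lemmaunion} together with compactness of $Y_A$, and ultimately on the approximation afforded by the uniform completion $\zeta_A : A \to C(Y_A)$ via Lemma~\ref{lem: GJ}. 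If one were to prove the theorem from scratch, this is exactly where the main obstacle would lie: establishing that every element of $\Box^{-1} x$ is witnessed by an $R_\Box$-successor of $x$, as opposed to the easy reverse inclusion, which follows immediately from the definition of $R_\Box$.
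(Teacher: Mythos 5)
Your proof is correct and is exactly the paper's argument: the theorem is stated immediately after Propositions~\ref{prop: R[x]}, \ref{prop: closed}, and~\ref{prop: open}, which verify the three conditions of Definition~\ref{def:cont for KHaus}, and the paper likewise treats it as a direct assembly of those results. Your remarks about the real work residing in Proposition~\ref{prop: difficult} and its supporting lemmas also match the paper's own assessment of where the difficulty lies.
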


We finish the section by showing how to extend the object correspondence of Theorem~\ref{thm: *} to a contravariant functor $(-)_*:\mbal\to\KHK$.

\begin{lemma} \label{propmaps2}
Let $(A,\Box),(B,\Box) \in \mbal$ and $\alpha:A \to B$ be a morphism in $\mbal$. Then $\alpha_*:Y_B \to Y_A$ is a bounded morphism.
\end{lemma}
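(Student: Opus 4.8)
The plan is to verify directly that $\alpha_*$ satisfies the defining equation of a bounded morphism, namely $\alpha_*(R_\Box[x]) = R_\Box[\alpha_*(x)]$ for every $x \in Y_B$, where on the left $R_\Box$ is the relation on $Y_B$ and on the right the relation on $Y_A$. Continuity of $\alpha_* = \alpha^{-1}$ is already supplied by Gelfand duality, and I will use repeatedly that $\alpha$ is an $\ell$-algebra homomorphism preserving $\Box$, so that $\alpha(\Box a) = \Box \alpha(a)$ and $a \ge 0$ implies $\alpha(a) \ge 0$, together with the elementary dictionary $\alpha_*(y) = z$ iff $\alpha(z) \subseteq y$ (maximality of $z$ and properness of $\alpha^{-1}(y)$ upgrade the containment $z \subseteq \alpha^{-1}(y)$ to equality). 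I split the equation into the two inclusions, i.e.\ the ``forth'' and ``back'' conditions.

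The forth inclusion $\alpha_*(R_\Box[x]) \subseteq R_\Box[\alpha_*(x)]$ is routine. Given $y \in Y_B$ with $x R_\Box y$, I must show $\alpha^{-1}(x) R_\Box \alpha^{-1}(y)$, that is, $(\alpha^{-1}(y))^+ \subseteq \Box^{-1}(\alpha^{-1}(x))$. If $a \in \alpha^{-1}(y)$ with $a \ge 0$, then $\alpha(a) \in y^+$, so $\Box \alpha(a) \in x$ because $x R_\Box y$; since $\alpha(\Box a) = \Box \alpha(a)$, this gives $\Box a \in \alpha^{-1}(x)$, as required.

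The back inclusion is the crux. Given $z \in Y_A$ with $\alpha^{-1}(x) R_\Box z$, I must produce $y \in Y_B$ with $x R_\Box y$ and $\alpha^{-1}(y) = z$; via the dictionary above this amounts to showing
\[
R_\Box[x] \cap \zer(\alpha(z)) \ne \varnothing \quad\text{in } Y_B,
\]
where $\zer(\alpha(z)) = \{ y \in Y_B \mid \alpha(z) \subseteq y \} = \alpha_*^{-1}(z)$. Both sets are closed in the compact space $Y_B$: the second is a fiber of the continuous map $\alpha_*$, while $R_\Box[x]$ is closed by Proposition~\ref{prop: R[x]} and, writing $S = (B \setminus \Box^{-1} x)^+$, satisfies $R_\Box[x] = \bigcap \{ \cozer(s) \mid s \in S \}$ directly from the definitions, whence $R_\Box[x] = \bigcap \{ \overline{\cozer(s)} \mid s \in S \}$ by Lemma~\ref{lemmaunion}(1). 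Likewise $\zer(\alpha(z)) = \bigcap \{ \zer(\alpha(a)) \mid a \in z^+ \}$ since $z$ is generated by $z^+$. So it suffices, by compactness of $Y_B$, to prove that the family $\{ \overline{\cozer(s)} \cap \zer(\alpha(a)) \mid s \in S,\ a \in z^+ \}$ has the finite intersection property.

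The decisive observation, which is where the hypothesis $\alpha^{-1}(x) R_\Box z$ enters, is that $\alpha(a) \in (\Box^{-1} x)^+$ for every $a \in z^+$: indeed $\alpha(a) \ge 0$, and $\Box \alpha(a) = \alpha(\Box a) \in x$ because $a \in z^+$ together with $\alpha^{-1}(x) R_\Box z$ forces $\Box a \in \alpha^{-1}(x)$. This is precisely the standing hypothesis of Lemma~\ref{lemmaunion} for the pair $(B, \Box)$, so Lemma~\ref{lemmaunion}(2) already yields $\overline{\cozer(s)} \cap \zer(\alpha(a)) \ne \varnothing$ for all $s \in S$ and $a \in z^+$. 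To upgrade individual nonemptiness to the finite intersection property I combine indices as in Lemma~\ref{lemmaunion}(3): given $s_1, \dots, s_m \in S$ and $a_1, \dots, a_n \in z^+$, the meet $s^* = s_1 \wedge \cdots \wedge s_m$ lies in $S$ (by (M1), exactly as in Lemma~\ref{lemmaunion}(3)) and the sum $a^* = a_1 + \cdots + a_n$ lies in $z^+$ (closure under addition), while $\overline{\cozer(s^*)} \subseteq \bigcap_i \overline{\cozer(s_i)}$ and $\zer(\alpha(a^*)) \subseteq \bigcap_j \zer(\alpha(a_j))$ (the latter because the $\alpha(a_j) \ge 0$ and $y$ is an $\ell$-ideal). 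Hence the finite intersection contains $\overline{\cozer(s^*)} \cap \zer(\alpha(a^*))$, which is nonempty by the previous sentence. Compactness then produces a point $y$ in the total intersection $R_\Box[x] \cap \zer(\alpha(z))$, and this $y$ is the desired successor. The main obstacle is this back inclusion; the step that unlocks it is recognizing that $\alpha(z^+) \subseteq (\Box^{-1} x)^+$, which lets the compactness machinery of Lemma~\ref{lemmaunion} be reused in $B$ with the extra summation over $z^+$ paralleling the meet over $S$.
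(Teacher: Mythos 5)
Your proof is correct, but it follows a genuinely different route from the paper's. The paper verifies the \emph{preimage} form of the bounded morphism condition, $(\alpha_*)^{-1}(R_\Box^{-1}[y]) = R_\Box^{-1}[(\alpha_*)^{-1}(y)]$ for $y \in Y_A$: using Esakia's lemma in the form of Remark~\ref{remark: Esakia and zeros} together with Lemma~\ref{rinvclosed}, both sides collapse to zero sets, namely $\zer(\alpha(\Box y^+))$ and $\zer(\Box\alpha(y^+))$, and the equality is then immediate from $\alpha \circ \Box = \Box \circ \alpha$. You instead verify the pointwise \emph{image} form $\alpha_*(R_\Box[x]) = R_\Box[\alpha_*(x)]$ via the forth and back conditions; the forth direction is the same triviality in both approaches, but your back direction requires real work, since Proposition~\ref{prop: difficult} alone only places each single element of $\alpha(z^+)$ in \emph{some} successor of $x$, not all of $\alpha(z^+)$ in one successor. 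Your fix --- rerunning the finite-intersection-property machinery of Lemma~\ref{lemmaunion} with the doubly indexed family $\overline{\cozer(s)} \cap \zer(\alpha(a))$, $s \in S$, $a \in z^+$ --- is sound: the identity $R_\Box[x] = \bigcap\{\cozer(s) \mid s \in S\}$ holds directly from the definition of $R_\Box$, the hypothesis $\alpha^{-1}(x)\, R_\Box\, z$ guarantees $\alpha(z^+) \subseteq (\Box^{-1}x)^+$ (in particular $\Box 0 \in x$, so the standing hypothesis of Lemma~\ref{lemmaunion} is available), and combining finite meets in $S$ with finite sums in $z^+$ correctly reduces FIP to Lemma~\ref{lemmaunion}(2). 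In effect you prove a strengthening of Proposition~\ref{prop: difficult} relative to the fiber $\zer(\alpha(z))$. What each approach buys: the paper's argument is shorter and reuses Esakia's lemma so that no new compactness argument is needed; yours is more concrete in that it exhibits how the required successor $y$ is actually produced, at the cost of duplicating the compactness machinery that the paper quarantines inside Lemma~\ref{lemmaunion} and Proposition~\ref{prop: difficult}.
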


\begin{proof}
For each $y \in Y_A$, we have that $y^+$ and $\alpha(y^+)$ are sets of nonnegative elements closed under addition, so Remark~\ref{remark: Esakia and zeros} applies. Therefore, since $Z(y^+) = \{y\}$,
\[
(\alpha_*)^{-1}(R_\Box^{-1}[y]) =(\alpha_*)^{-1}(R_\Box^{-1}[\zer(y^+)])=(\alpha_*)^{-1}(\zer(\Box y^+))
\]
and
\[
\zer(\Box \alpha(y^+))=R_\Box^{-1}[\zer(\alpha(y^+))].
\]
The definition of $\alpha_*$ shows that $(\alpha_*)^{-1}(\zer(\Box y^+))=\zer(\alpha(\Box y^+))$ and $(\alpha_*)^{-1}(\zer(y^+)) = \zer(\alpha(y^+))$. This yields
\[
(\alpha_*)^{-1}(R_\Box^{-1}[y]) = (\alpha_*)^{-1}(\zer(\Box y^+)) = \zer(\alpha(\Box y^+))
\]
and
\[
R_\Box^{-1}[(\alpha_*)^{-1}(y)] = R_\Box^{-1}[(\alpha_*)^{-1}(\zer(y^+))] = R_\Box^{-1}[\zer(\alpha(y^+))]  = \zer(\Box \alpha(y^+)).
\]
Consequently, since $\alpha$ commutes with $\Box$, we have $(\alpha_*)^{-1}(R_\Box^{-1}[y]) = R_\Box^{-1}[(\alpha_*)^{-1}(y)]$,
which proves that $\alpha_*$ is a bounded morphism.
\end{proof}

Putting Theorem~\ref{thm: *} and Lemma~\ref{propmaps2} together and remembering that $(-)_*:\bal\to\KHaus$ is a contravariant functor yields:

\begin{theorem}
$(-)_*:\mbal\to\KHK$ is a contravariant functor.
\end{theorem}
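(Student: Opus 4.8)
The plan is to assemble this statement from the two pieces of structure already in place, since all of the genuine mathematical content has been absorbed into Theorem~\ref{thm: *} and Lemma~\ref{propmaps2}. I would first fix the object assignment and the morphism assignment, then verify that each lands in $\KHK$, and finally check that composition and identities are respected by piggybacking on the fact that $(-)_*:\bal\to\KHaus$ is already a contravariant functor.

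For the object assignment, I would send $(A,\Box)\in\mbal$ to $(Y_A,R_\Box)$. By Theorem~\ref{thm: *} this pair is a compact Hausdorff frame, so the assignment does land in $\KHK$. For the morphism assignment, given a morphism $\alpha:(A,\Box)\to(B,\Box)$ in $\mbal$, I would set $\alpha_*=\alpha^{-1}:Y_B\to Y_A$, exactly as in the Gelfand setting. Here I would note that $\alpha$ is in particular a $\bal$-morphism, so by the Gelfand functor $(-)_*:\bal\to\KHaus$ the map $\alpha_*$ is a well-defined continuous map between the Yosida spaces. Lemma~\ref{propmaps2} then supplies the additional fact that $\alpha_*$ is a bounded morphism with respect to $R_\Box$. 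Combining these, $\alpha_*$ is a continuous bounded morphism, hence a morphism in $\KHK$.

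It then remains to confirm functoriality. Because the underlying spaces $Y_A$ and the underlying maps $\alpha_*=\alpha^{-1}$ are literally the same as those produced by $(-)_*:\bal\to\KHaus$, the identities $(\beta\circ\alpha)_*=\alpha_*\circ\beta_*$ and $(\func{id}_A)_*=\func{id}_{Y_A}$ hold at the level of continuous maps already by Gelfand duality, and they continue to hold verbatim in $\KHK$: forgetting $R_\Box$ recovers the $\KHaus$ situation, and forgetting $\Box$ recovers the $\bal$ situation, so no new verification is needed beyond what the Gelfand functor guarantees.

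I do not expect a serious obstacle at this stage; the theorem is a packaging result. The only points that require care are, first, making explicit that the single underlying map $\alpha_*$ simultaneously carries the two compatible properties of being continuous (from Gelfand duality) and being a bounded morphism (from Lemma~\ref{propmaps2}), and second, confirming that well-definedness on objects is exactly the statement of Theorem~\ref{thm: *}. Everything else is routine bookkeeping inherited from the contravariant functoriality of $(-)_*:\bal\to\KHaus$.
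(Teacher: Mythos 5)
Your proposal is correct and matches the paper's own argument exactly: the paper likewise obtains this theorem by combining Theorem~\ref{thm: *} (objects land in $\KHK$) and Lemma~\ref{propmaps2} (morphisms are bounded morphisms) with the already-established contravariant functor $(-)_*:\bal\to\KHaus$, which supplies continuity and the functoriality identities. No gaps; your explicit bookkeeping just spells out what the paper compresses into one sentence.
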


\section{Duality} \label{sec: duality}

In this section we prove our main results.
We show that $(-)_*$ and $(-)^*$ yield a dual adjunction between $\mbal$ and $\KHK$ which restricts to a dual equivalence between the category of uniformly complete members of $\mbal$ and $\KHK$.
\begin{definition}
Let $\mubal$ be the full subcategory of $\mbal$ consisting of uniformly complete objects of $\mbal$.
\end{definition}

\begin{proposition} \label{lem: reflective}
$\mubal$ is a reflective subcategory of $\mbal$.
\end{proposition}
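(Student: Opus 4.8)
The plan is to produce, for each $(A,\Box)\in\mbal$, an explicit reflection arrow into $\mubal$ and then verify its universal property. The candidate reflection sends $(A,\Box)$ to $(C(Y_A),\Box_{R_\Box})$, where $R_\Box$ is the continuous relation on the Yosida space supplied by Theorem~\ref{thm: *}; this pair lies in $\mubal$ since $C(Y_A)$ is uniformly complete and $(C(Y_A),\Box_{R_\Box})\in\mbal$ via the functor $(-)^*$. The reflection unit will be the uniform completion map $\zeta_A:A\to C(Y_A)$ of Proposition~\ref{prop: SW}, which I must upgrade to a morphism in $\mbal$.

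First I would record a norm estimate that is used throughout: $\Box$ is non-expansive on any $(A,\Box)\in\mbal$, i.e. $\|\Box a-\Box b\|\le\|a-b\|$. Writing $\delta=\|a-b\|$ gives $b-\delta\le a\le b+\delta$, and applying monotonicity (Lemma~\ref{lem: properties}(1)) together with $\Box(b\pm\delta)=\Box b\pm\delta(1-\Box 0)$ (Lemma~\ref{lem: properties}(5)) yields $|\Box a-\Box b|\le\delta(1-\Box 0)\le\delta\cdot 1$, using $0\le\Box 0\le 1$ (from Lemma~\ref{lem: properties}(3) and $\Box 0\le\Box 1=1$). In particular $\Box$ is uniformly continuous on every object of $\mbal$.

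The technical heart, and the step I expect to be the main obstacle, is showing that $\zeta_A$ preserves the modal operator, i.e. $\zeta_A(\Box a)=\Box_{R_\Box}(\zeta_A a)$ for all $a$. Fixing $x\in Y_A$ and writing $\mu=\zeta_A(\Box a)(x)$ (so $\Box a-\mu\in x$), I would split on whether $x\in D_A$ or $x\in E_A$. If $x\in E_A$ then $\Box 0\notin x$ (Lemma~\ref{rinvclosed}(2)), so $1-\Box a\in x$ by Lemma~\ref{lem: cosets}(5), giving $\mu=1=(\Box_{R_\Box}\zeta_A a)(x)$. If $x\in D_A$ then $\Box 0\in x$, and I must prove $\mu=\inf\{\zeta_A(a)(y)\mid y\in R_\Box[x]\}$. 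For ``$\le$'', I would assume some $y\in R_\Box[x]$ has $\zeta_A(a)(y)<\lambda<\mu$; then $(a-\lambda)^+\in y^+\subseteq\Box^{-1}x$, so (M3) and Lemma~\ref{lem: cosets}(3) force $(\Box a-\lambda)^+\in x$, whence $\Box a+x\le\lambda+x$ and $\mu\le\lambda$, a contradiction. For ``$\ge$'', I would assume $\mu<\lambda<\inf_y\zeta_A(a)(y)$; then $(\Box a-\lambda)^+\in x$, so Lemma~\ref{lem: cosets}(3) gives $\Box((a-\lambda)^+)\in x$, i.e. $(a-\lambda)^+\in(\Box^{-1}x)^+$, and Proposition~\ref{prop: difficult} places $(a-\lambda)^+$ in some $y\in R_\Box[x]$; but $\lambda<\zeta_A(a)(y)$ forces $(a-\lambda)^+\notin y$, a contradiction. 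Proposition~\ref{prop: difficult} is precisely what makes the ``$\ge$'' direction go through, which is why this is the crux of the argument.

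Finally I would verify the universal property. Given $(B,\Box)\in\mubal$ and a morphism $g:(A,\Box)\to(B,\Box)$ in $\mbal$, Proposition~\ref{prop: SW} supplies a unique $\bal$-morphism $\bar g:C(Y_A)\to B$ with $\bar g\circ\zeta_A=g$; it remains to check that $\bar g$ preserves $\Box$. On the dense subalgebra $\zeta_A(A)$ this is immediate, since $\bar g(\Box_{R_\Box}\zeta_A a)=\bar g(\zeta_A\Box a)=g(\Box a)=\Box g(a)=\Box\bar g(\zeta_A a)$ by the previous step. As $\bar g$ is non-expansive (being a $\bal$-morphism) and both $\Box_{R_\Box}$ on $C(Y_A)$ and $\Box$ on $B$ are non-expansive by the estimate above, the maps $f\mapsto\bar g(\Box_{R_\Box}f)$ and $f\mapsto\Box\bar g(f)$ are norm-continuous and agree on the dense set $\zeta_A(A)$, hence agree everywhere. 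Thus $\bar g$ is a morphism in $\mubal$, and its uniqueness is inherited from Proposition~\ref{prop: SW}. This exhibits $\zeta_A$ as a reflection arrow and proves that $\mubal$ is reflective in $\mbal$.
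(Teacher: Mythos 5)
Your proof is correct, and its technical core coincides with the paper's: you show that $\zeta_A$ preserves $\Box$ by the same case split on $x \in E_A$ (via Lemma~\ref{rinvclosed}(2) and Lemma~\ref{lem: cosets}(5)) versus $x \in D_A$, with Lemma~\ref{lem: cosets}(3) giving the inequality $\zeta_A(\Box a)(x) \le \inf\{\zeta_A(a)(y) \mid xR_\Box y\}$ and Proposition~\ref{prop: difficult} giving the reverse one, exactly as in the paper. Where you genuinely diverge is in verifying the universal property. The paper takes the unique $\bal$-morphism $\gamma = \zeta_B^{-1}\circ C(\alpha_*)$ supplied by Proposition~\ref{prop: SW} and shows it preserves $\Box$ by exhibiting it as a composite of $\mbal$-morphisms: $C(\alpha_*)$ preserves $\Box$ by Lemma~\ref{propmaps2} (the dual map $\alpha_*$ is a continuous bounded morphism) combined with Lemma~\ref{lem: varphi* a morphism}, and $\zeta_B^{-1}$ preserves $\Box$ because $\zeta_B$ is an $\mbal$-isomorphism. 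You instead prove a new estimate --- every modal operator on an object of $\mbal$ is non-expansive, $\|\Box a - \Box b\| \le \|a-b\|$, which follows correctly from Lemma~\ref{lem: properties}(1), (5) and $0 \le \Box 0 \le 1$ --- and then extend the identity $\bar g\circ \Box_{R_\Box} = \Box \circ \bar g$ from the dense subalgebra $\zeta_A(A)$ to all of $C(Y_A)$ by norm-continuity. Both routes are sound. The paper's is shorter given that the functoriality lemmas are already in place; yours is more self-contained, avoiding Lemmas~\ref{propmaps2} and \ref{lem: varphi* a morphism} entirely, and the non-expansiveness of $\Box$ is a fact of independent interest: it isolates the conceptual reason the reflection works, namely that a modal operator is uniformly continuous and hence admits a unique continuous extension to the uniform completion.
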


\begin{proof}
By Proposition~\ref{prop: SW}(2), $\ubal$ is a reflective subcategory of $\bal$, where $\zeta:\bal\to\ubal$ is the reflector. We first show that $\zeta_A$ is an $\mbal$-morphism for each $(A,\Box)\in\mbal$. Let $x \in Y_A$. Recall that
\[
(\Box_{R_\Box}\zeta_A(a))(x) = \left\{\begin{array}{ll}\inf \{ \zeta_A(a)(y) \mid x R_\Box y \} & \mbox{if }x \in D_A\\ 1 & \mbox{otherwise.}\end{array}\right.
\]

If $x \in E_A$, then $\Box 0 \notin x$ by Lemma~\ref{rinvclosed}(2). Therefore, $\Box a - 1 \in x$ by Lemma~\ref{lem: cosets}(5), and hence $\zeta_A(\Box a)(x) = 1 = (\Box_{R_\Box}\zeta_A(a))(x)$. Now let $x \in D_A$. Then $(\Box_{R_\Box}\zeta_A(a))(x) = \inf \{ \zeta_A(a)(y) \mid x R_\Box y \}$. We first show that $\zeta_A(\Box a)(x) \leq \inf \{ \zeta_A(a)(y) \mid x R_\Box y \}$.
Suppose that $x R_\Box y$, so $y^+ \subseteq \Box^{-1} x$. Let $\lambda = \zeta_A(a)(y)$. Then $a - \lambda \in y$, so
$(a-\lambda)^+ \in y^+ \subseteq \Box^{-1} x$, and hence $(\Box a -\lambda)^+ \in x$ iff $\Box((a-\lambda)^+) \in x$
by Lemma~\ref{lem: cosets}(3). Therefore,
\[
0=\zeta_A((\Box a -\lambda)^+)(x)=\max \{ \zeta_A(\Box a)(x)-\lambda ,0 \},
\]
so $\zeta_A(\Box a)(x)-\lambda \leq 0$,
and hence $\zeta_A(\Box a)(x) \leq \lambda =\zeta_A(a)(y)$. Thus, $\zeta_A(\Box a)(x) \leq \inf \{ \zeta_A(a)(y) \mid x R_\Box y \}$.

We next show that $\zeta_A(\Box a)(x) \geq \inf \{ \zeta_A(a)(y) \mid x R_\Box y \}$.
Let $\mu = \zeta_A(\Box a)(x)$.
We have $\Box((a-\mu)^+) \in x$ iff
$(\Box a-\mu)^+ \in x$. Therefore, by Proposition~\ref{prop: difficult},
\[
(a-\mu)^+ \in (\Box^{-1}x)^+=\bigcup \{ y^+ \mid x R_\Box y \}.
\]
So there is $y \in R_\Box[x]$ such that $(a-\mu)^+ \in y$. Thus, $\max \{ \zeta_A(a)(y)-\mu, 0 \}=0$. This yields $\zeta_A(a)(y)-\mu \leq 0$,
and so $\zeta_A(a)(y) \leq \mu =\zeta_A(\Box a)(x)$. Consequently, $\inf \{ \zeta_A(a)(y) \mid y \in R_\Box[x] \} \leq \zeta_A(\Box a)(x)$.

Next, let $\alpha : A \to B$ be an $\mbal$-morphism with $B \in \mubal$. Since $\alpha$ is a $\bal$-morphism, there is a unique $\bal$-morphism $\gamma : C(Y_A) \to B$, given by $\gamma = \zeta_B^{-1} \circ C(\alpha_*)$, such that $\gamma \circ \zeta_A = \alpha$. 
\[
\begin{tikzcd}[column sep = 5pc]
A \arrow[r, "\zeta_A"] \arrow[d, "\alpha"'] & C(Y_A) \arrow[d, "C(\alpha_*)"] \arrow[ld, "\gamma"] \\
B & \arrow[l, "\zeta_B^{-1}"] C(Y_B)
\end{tikzcd}
\]
As we saw in the paragraph above, $\zeta_B$ is an $\mbal$-morphism. Also, $C(\alpha_*) : C(Y_A) \to C(Y_B)$ is an $\mbal$-morphism by Lemmas~\ref{propmaps2} and \ref{lem: varphi* a morphism}. Therefore, $\gamma$ is an $\mbal$-morphism, concluding the proof.
\end{proof}

\begin{theorem}\label{thm: mbal}
The functors $(-)_*:\mbal\to\KHK$ and $(-)^*:\KHK\to\mbal$ yield a dual adjunction of the categories, which restricts to
a dual equivalence between $\mubal$ and $\KHK$.
\[
\begin{tikzcd}
\mubal \arrow[rr, hookrightarrow] && \mbal \arrow[dl, "(-)_*"]  \arrow[ll, bend right = 20] \\
&  \KHK \arrow[ul,  "(-)^*"] &
\end{tikzcd}
\]
\end{theorem}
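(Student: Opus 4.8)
The plan is to assemble the dual adjunction from the functors already constructed, then use the two categories' established machinery (Gelfand duality on the algebraic side, and the homeomorphism $\varepsilon_X$ on the topological side) to upgrade the adjunction to an equivalence on the uniformly complete subcategory. The natural transformations will be the ones inherited from Gelfand duality, namely $\zeta_A : A \to (A_*)^* = C(Y_A)$ and $\varepsilon_X : X \to (X^*)_*$, and the work is to verify that these are morphisms in the modal categories $\mbal$ and $\KHK$ (not just in $\bal$ and $\KHaus$) and that they remain natural.

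First I would observe that we already have the contravariant functors $(-)_* : \mbal \to \KHK$ and $(-)^* : \KHK \to \mbal$. For the adjunction it suffices to produce natural transformations $\zeta : \mathrm{Id}_{\mbal} \to (-)^* \circ (-)_*$ and $\varepsilon : \mathrm{Id}_{\KHK} \to (-)_* \circ (-)^*$ and check the triangular identities. On underlying objects these are precisely the Gelfand maps, so the triangular identities hold by Gelfand duality; the new content is that each component lives in the richer category. For $\zeta_A$, the proof of Proposition~\ref{lem: reflective} already shows that $\zeta_A : (A, \Box) \to (C(Y_A), \Box_{R_\Box})$ is an $\mbal$-morphism, i.e.\ $\Box_{R_\Box} \zeta_A(a) = \zeta_A(\Box a)$ for all $a$. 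For $\varepsilon_X$, I would verify that when $(X,R) \in \KHK$, the homeomorphism $\varepsilon_X : X \to Y_{C(X)}$ is a bounded morphism between $(X,R)$ and $(Y_{C(X)}, R_{\Box_R})$; concretely, one checks $\varepsilon_X(R[x]) = R_{\Box_R}[\varepsilon_X(x)]$ by unwinding the definition of $R_{\Box_R}$ against $\Box_R$ on $C(X)$ and using that $\varepsilon_X(x)$ is the maximal $\ell$-ideal of functions vanishing at $x$. Naturality of both transformations reduces to the naturality already known from Gelfand duality, since the underlying maps coincide.

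For the restriction to an equivalence, I would invoke that on $\mubal$ the map $\zeta_A$ is an isomorphism by Proposition~\ref{prop: SW}(1) (uniform completeness forces $\zeta_A$ to be a $\bal$-isomorphism, hence an $\mbal$-isomorphism once we know it preserves $\Box$, which the previous paragraph supplies), and that $\varepsilon_X$ is a homeomorphism by Gelfand duality, hence a $\KHK$-isomorphism once we know it is a bounded morphism in both directions. Since $(X^*)_* = (C(X))_* \in \mubal$ because $C(X)$ is uniformly complete, the functor $(-)^*$ lands in $\mubal$, and the adjunction restricts to an equivalence between $\mubal$ and $\KHK$ in the standard way.

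\textbf{The main obstacle} I expect is verifying that $\varepsilon_X$ is a bounded morphism, i.e.\ that the relation $R_{\Box_R}$ reconstructed on $Y_{C(X)}$ matches the original relation $R$ under the homeomorphism $\varepsilon_X$. This is where one must show that passing from $(X,R)$ to $C(X)$ with $\Box_R$ and back recovers $R$, which is the frame-side analogue of the difficult Proposition~\ref{prop: difficult}; the inclusion $\varepsilon_X(R[x]) \subseteq R_{\Box_R}[\varepsilon_X(x)]$ is routine from the definition of $\Box_R$, while the reverse inclusion uses the point-closedness of $R$ (Proposition~\ref{prop: R[x]}) together with the separation properties of $C(X)$ to show that any maximal $\ell$-ideal $R_{\Box_R}$-related to $\varepsilon_X(x)$ is realized as $\varepsilon_X(y)$ for some $y \in R[x]$. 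Everything else is bookkeeping that follows from Gelfand duality once the modal structure is shown to be preserved.
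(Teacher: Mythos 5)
Your proposal is correct and takes essentially the same route as the paper: inherit the unit $\zeta$ and counit $\varepsilon$ from the Gelfand dual adjunction so that only the modal compatibility of their components needs checking, cite Proposition~\ref{lem: reflective} for $\zeta_A(\Box a)=\Box_{R_\Box}\zeta_A(a)$, verify that $\varepsilon_X$ respects the relations via the easy inclusion plus a Urysohn-type separation argument using that $R[x]$ is closed (handling $R[x]=\varnothing$ via $\Box_R 0$), and then restrict to $\mubal$ using Proposition~\ref{prop: SW}. One small correction: the point-closedness you need is that of $R$ on $X$, which is part of Definition~\ref{def:cont for KHaus}, not Proposition~\ref{prop: R[x]} (which concerns $R_\Box$ on the Yosida space).
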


\begin{proof}
By Gelfand duality, the functors $(-)_*:\bal\to\KHaus$ and $(-)^*:\KHaus\to\bal$ yield a dual adjunction between $\bal$ and
$\KHaus$ that restricts to a dual equivalence between $\ubal$ and $\KHaus$. The natural transformations are given by $\zeta:1_{\bal}\to(-)^* \circ (-)_*$ and
$\varepsilon:1_{\KHaus}\to(-)_* \circ (-)^*$ where we recall from Section~\ref{sec: Gelfand duality} that $\varepsilon_X: X \to X_{C(X)}$ is defined by
\[
\varepsilon_X(x) = M_x = \{f \in C(X) \mid f(x)=0 \}.
\]
Therefore, it is sufficient to show that $\zeta_A$ is a morphism in $\mbal$ for each $(A, \Box) \in \mbal$ and that $\varepsilon_X$ is a bounded morphism for each $(X, R) \in \KHK$. We showed in the proof of Proposition~\ref{lem: reflective} that $\zeta_A(\Box a)=\Box_{R_\Box}\zeta_A(a)$ for each $(A,\Box)\in\mbal$ and $a\in A$. Thus, $\zeta_A$ is a morphism in $\mbal$, and hence it remains to show that $xRy$ iff $\varepsilon_X(x) R_{\Box_R} \varepsilon_X(y)$ for each $(X,R)\in\KHK$.

To see this recall that
$\varepsilon_X(x) R_{\Box_R} \varepsilon_X(y)$ means that $M_y^+ \subseteq \Box_R^{-1} M_x$. First suppose that $x R y$ and $f \in M_y^+$. Then
$f(y)=0$ and $f \geq 0$. We have $(\Box_R f) (x)= \inf \{ f(z) \mid x R z \}=0$. Therefore, $\Box_R f \in M_x$, and so $f \in \Box_R^{-1} M_x$.
This gives $M_y^+ \subseteq \Box_R^{-1} M_x$.
Next suppose that $x {\xnot R} y$, so $y \notin R[x]$. If $R[x] = \varnothing$, then $(\Box_R0)(x) = 1$, so $0 \in M_y^+$ but $\Box_R 0 \notin M_x$, yielding $M_y^+ \not\subseteq \Box_R^{-1} M_x$. On the other hand, if $R[x] \ne \varnothing$, since $R[x]$ is closed, by Urysohn's Lemma there is $f \geq 0$ such that $f(y)=0$ and $f(R[x]) = \{1\}$. Thus, $f \in M_y^+$ and
$\Box_R f \notin M_x$. Consequently, $M_y^+ \nsubseteq \Box_R^{-1}M_x$.
\end{proof}

\section{Connections with modal algebras and descriptive frames}\label{sec: connection with Esakia-Goldblatt}

In this section we connect Theorem~\ref{thm: mbal} to the duality between $\ma$ and $\DF$.
Recall that a \emph{modal algebra} is a pair $\mathfrak A=(A,\Box)$ where $A$ is a boolean algebra and $\Box$ is a unary function on $A$ preserving finite meets (including 1). As usual, the dual function $\Diamond$ is defined by $\Diamond a=\neg\Box\neg a$, and is axiomatized as a unary function preserving
finite joins (including 0). Let $\ma$ be the category of modal algebras and modal homomorphisms (boolean homomorphisms preserving $\Box$).

We recall from the Introduction that a \emph{descriptive frame} is a pair $\mathfrak F=(X,R)$ where $X$ is a Stone space and $R$ is a continuous relation on $X$, and that $\DF$ is the category of descriptive frames and continuous bounded morphisms.
As we already pointed out,
Stone duality generalizes to the following duality:

\begin{theorem} [\cite{Esa74,Gol76}] \label{thm: EG-theorem}
$\ma$ is dually equivalent to $\DF$.
\end{theorem}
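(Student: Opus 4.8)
The plan is to obtain the Esakia--Goldblatt duality as a restriction of Theorem~\ref{thm: mbal}, so that nothing beyond the machinery already assembled is needed on the topological side. First I would record that $\Stone$ is a full subcategory of $\KHaus$ and that, for a Stone space $X$, the three conditions of Definition~\ref{def:cont for KHaus} are equivalent to the standard requirement that $R$ be point-closed and that $R^{-1}[U]$ be clopen for every clopen $U$. The forward implication is immediate from conditions~(2) and~(3). For the converse one writes an arbitrary closed set as a down-directed intersection of clopens and an arbitrary open set as a union of clopens: applying Esakia's lemma recovers condition~(2), while the fact that $R^{-1}$ commutes with unions recovers condition~(3). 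This identifies $\DF$ with the full subcategory of $\KHK$ whose objects have a Stone underlying space, the morphisms in both being the continuous bounded morphisms.

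Next I would restrict the dual equivalence of Theorem~\ref{thm: mbal}. Writing $\mathcal{C}$ for the full subcategory of $\mubal$ consisting of those $(A,\Box)$ whose Yosida space $Y_A$ is a Stone space, the functors $(-)_*$ and $(-)^*$ restrict to a dual equivalence between $\mathcal{C}$ and $\DF$: indeed $(A,\Box)\in\mathcal{C}$ iff $(Y_A,R_\Box)$ is a descriptive frame, and conversely $(C(X),\Box_R)\in\mathcal{C}$ whenever $X$ is Stone, since the Yosida space of $C(X)$ is homeomorphic to $X$ by Gelfand duality.

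The substance of the proof is then a covariant equivalence $\Id:\mathcal{C}\to\ma$ given by passing to idempotents. For $(A,\Box)\in\mathcal{C}$ the map $\zeta_A:A\to C(Y_A)$ is an $\mbal$-isomorphism identifying $\Box$ with $\Box_{R_\Box}$, and the idempotents of $C(Y_A)$ are the characteristic functions $\chi_U$ of clopen $U\subseteq Y_A$. A direct computation gives $\Box_{R_\Box}\chi_U=\chi_{\Box U}$, where $\Box U=Y_A\setminus R_\Box^{-1}[Y_A\setminus U]$ is clopen by continuity of $R_\Box$; hence $\Box$ preserves idempotents, and by (M1) together with Lemma~\ref{lem: properties}(2) its restriction makes $(\Id(A),\Box)$ a modal algebra. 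Since $\mbal$-morphisms preserve idempotents and $\Box$, this $\Id$ is a well-defined functor. Composing $\Id$ with the dual equivalence between $\mathcal{C}$ and $\DF$ of the previous step will yield the asserted duality between $\ma$ and $\DF$.

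The hard part will be proving that $\Id:\mathcal{C}\to\ma$ is an equivalence. For essential surjectivity, given $(B,\Box_B)\in\ma$ one forms its Stone dual $X_B$, equips it with the canonical relation $xRy$ iff $(\forall b)(\Box_B b\in x\Rightarrow b\in y)$, and must verify that $R$ is continuous, so that $(C(X_B),\Box_R)\in\mathcal{C}$ and its idempotent modal algebra is isomorphic to $(B,\Box_B)$. For full faithfulness, one uses that on objects of $\mubal$ the operator $\Box$ is a uniform contraction, $\|\Box_R f-\Box_R g\|\le\|f-g\|$, so it is determined by its values on idempotents, whose real span is uniformly dense by the Stone--Weierstrass theorem; the remaining point, that a boolean homomorphism commuting with $\Box$ on idempotents dualizes to a bounded morphism and conversely, is a standard fact of modal Stone duality and is most easily checked on the frame side. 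I expect the verification of continuity of the canonical relation to be the main obstacle, since it is the genuinely topological heart of the Esakia--Goldblatt theorem and the one ingredient that our general duality does not supply for free.
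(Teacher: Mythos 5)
The paper does not prove this theorem at all: it is quoted from Esakia and Goldblatt, and is in fact used as an ingredient (together with Theorem~\ref{thm: mbal} and Remark~\ref{rem: cubal}) to derive Theorem~\ref{thm: clean}. Your plan inverts that logic: prove the idempotent equivalence directly and deduce the duality from Theorem~\ref{thm: mbal}. The architecture is sound and non-circular, since everything you import from the paper (Theorem~\ref{thm: mbal}, Esakia's lemma, Lemmas~\ref{lem: Box on Id(A)}--\ref{lem: Id}) is established independently of the present theorem, and your first three steps (identifying $\DF$ with the Stone-space objects of $\KHK$, restricting the dual equivalence, and checking $\Box_{R_\Box}\chi_U=\chi_{\Box U}$) are correct. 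The problem is that what remains unproven is exactly the substance of the theorem.

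Concretely, two gaps. First, essential surjectivity of $\Id:\mathcal{C}\to\ma$ requires showing that the canonical relation $R$ on the ultrafilter space $X_B$ of $(B,\Box_B)$ is continuous \emph{and} that $\Id(C(X_B),\Box_R)\cong(B,\Box_B)$, i.e.\ that $\Box_R\chi_{\widehat{b}}=\chi_{\widehat{\Box_B b}}$ for each $b\in B$. Point-closedness is immediate from $R[x]=\bigcap\{\widehat{b}\mid \Box_B b\in x\}$, but the rest reduces to the ultrafilter existence lemma: if $\Box_B b\notin x$, then $\Box_B^{-1}x\cup\{\neg b\}$ has the finite intersection property (because $\Box_B$ preserves finite meets), hence extends to an ultrafilter $y$ with $xRy$ and $b\notin y$; this is what yields $R^{-1}[\widehat{b}]=\widehat{\Diamond_B b}$ and the agreement of $\Box_R$ with $\Box_B$ on clopens. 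You explicitly flag this as ``the main obstacle'' and never carry it out, so the proof stops precisely where the theorem begins. Second, your full-faithfulness argument contains an invalid inference: from the contraction property and the uniform density of the \emph{span} of the idempotents you conclude that $\Box$ is ``determined by its values on idempotents,'' but $\Box$ is not linear, so knowing it on idempotents says nothing a priori about its values on their linear combinations. This can be repaired: for orthogonal idempotents $e_i$ with $\sum_i e_i=1$ and $M\ge\max_i\lambda_i$ one has $\sum_i\lambda_i e_i=\bigwedge_i\bigl(M-(M-\lambda_i)e_i\bigr)$, so (M1), (M5), and Lemma~\ref{lem: properties}(5) express $\Box$ of a simple function through $\Box$ on idempotents, and only then do density and continuity apply; but this computation must be done, and the remaining ``standard fact'' that a Boolean homomorphism commuting with $\Box$ on idempotents dualizes to a bounded morphism is yet another ultrafilter argument you defer. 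As written, the proposal repackages the Esakia--Goldblatt theorem rather than proving it.
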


The functors $(-)^*:\DF\to\ma$ and $(-)_*:\ma\to\DF$
are defined as follows. For a descriptive Kripke frame $\mathfrak F=(X,R)$ let $\mathfrak F^*=(\Clop(X),\Box_R)$ where $\Clop(X)$ is the boolean algebra of clopen subsets of $X$ and $\Box_R U = X \setminus R^{-1}[X \setminus U]$ (alternatively, $\Diamond_R U = R^{-1}[U]$). For a bounded morphism $f$ let $f^*=f^{-1}$. Then $(-)^*:\DF\to\ma$ is a well-defined contravariant functor.

For a modal algebra $\mathfrak A=(A,\Box)$ let $\mathfrak A_*=(Y_A,R_\Box)$ where $Y_A$ is the set of ultrafilters of $A$ and
\[
x R_\Box y \quad\mbox{iff}\quad (\forall a\in A)(\Box a \in x \Rightarrow a \in y) \quad\mbox{iff}\quad \Box^{-1}x \subseteq y
\]
(alternatively, $x R_\Box y$ iff $(\forall a\in A)(a \in y \Rightarrow \Diamond a\in x)$ iff $y \subseteq \Diamond^{-1}x$). For a modal algebra homomorphism $h$ let $h_*=h^{-1}$. Then $(-)_*:\ma\to\DF$ is a well-defined contravariant functor, and the functors $(-)_*$ and $(-)^*$ yield a dual equivalence of $\ma$ and $\DF$.

To define a functor from $\mbal$ to $\ma$ we recall that for each commutative ring $A$ with $1$, the idempotents of $A$ form a boolean algebra $\Id(A)$, where the boolean
operations on $\Id(A)$ are defined as follows:
\[
e \wedge f = ef, \ \ \ e \vee f = e + f - ef, \ \ \ \neg e = 1 - e.
\]
We point out that if $A \in \bal$, then the lattice operations on $A$ restrict to those on $\Id(A)$.

\begin{remark} \label{rem: Birkhoff}
We will use the following two identities of $f$-rings (see \cite[Sec.~XIII.3]{Bir79} and \cite[Cor.~XVII.5.1]{Bir79}):
\[
(a \wedge b) + c = (a + c) \wedge (b + c) \quad\mbox{and}\quad (a \wedge b) d = (ad) \wedge (bd) \mbox{ for } d \ge 0.
\]
\end{remark}

\begin{lemma} \label{lem: Box on Id(A)}
If $(A,\Box)\in\mbal$, then $\Box$ sends idempotents to idempotents.
\end{lemma}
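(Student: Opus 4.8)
The plan is to show that if $e\in\Id(A)$, then $\Box e$ is again idempotent, i.e. $(\Box e)^2=\Box e$. An idempotent $e$ satisfies $0\le e\le 1$ (since $e=e^2\ge 0$ and $1-e=(1-e)^2\ge 0$), so I expect to exploit the fact that $e$ is ``caught between'' $0$ and $1$ and use the multiplicative axiom (M5) together with the identity $e\cdot e=e$.

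First I would record that for an idempotent $e$ we have $e\ge 0$, so by Lemma~\ref{lem: properties}(3) we get $\Box e\ge 0$; likewise $1-e$ is idempotent and nonnegative, and by Lemma~\ref{lem: properties}(1) (order preservation) together with $\Box 1=1$ from Lemma~\ref{lem: properties}(2) we obtain $\Box e\le 1$. Thus $0\le\Box e\le 1$. Next, the central computation is to apply (M5) with $\lambda$ replaced by the idempotent $e$: since $e\ge 0$, the axiom $\Box(\lambda a)=(\Box\lambda)(\Box a)$ does not literally apply (it is stated for real scalars $\lambda$), so instead I would look for the analogue of (M5) with an idempotent multiplier. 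The natural route is to use the $f$-ring identity from Remark~\ref{rem: Birkhoff}, namely $(a\wedge b)d=(ad)\wedge(bd)$ for $d\ge 0$, to turn the multiplicative structure into a meet computation that (M1) can handle.

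Concretely, since $e$ is idempotent, $ef=(e\wedge 1)f$ whenever $0\le f\le 1$, and more usefully one has $e\wedge x = ex$ for any $x$ with $0\le x\le 1$; applying this with $x=\Box e$ (which lies in $[0,1]$ by the first step) should let me write $\Box e\wedge e$-type expressions multiplicatively. The cleanest line I anticipate is: because $e=e\wedge 1$ and $e$ is idempotent, I can compute $\Box e$ via (M1) after writing $e$ as a meet, or alternatively show directly that $\Box e$ is fixed by the idempotent-defining equation. The key identity I would aim to establish is $(\Box e)(\Box e)=\Box e$, and I expect this to fall out of combining $0\le \Box e\le 1$ with an $f$-ring identity asserting that any element of $[0,1]$ which is below $\Box 1=1$ and arises as $\Box$ of an idempotent must itself be idempotent.

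The main obstacle will be that (M5) is stated only for \emph{real} scalars $\lambda\ge 0$, not for arbitrary idempotent multipliers, so I cannot directly conclude $\Box(e\cdot e)=(\Box e)(\Box e)$. Bridging this gap is the crux: I must either derive an idempotent-multiplier version of (M5) from the given axioms (most plausibly by approximating or by using the meet-based identities in Remark~\ref{rem: Birkhoff} to replace multiplication by an idempotent with a lattice operation that (M1) and (M3) control), or argue pointwise on the Yosida space via the representation $\zeta_A$ where idempotents correspond to $\{0,1\}$-valued functions and the claim becomes transparent. Given the machinery already developed, I would favor the purely algebraic argument through $0\le\Box e\le 1$ plus an $f$-ring manipulation, falling back on the Yosida/functional representation only if the algebraic identity proves stubborn.
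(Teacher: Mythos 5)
Your favored algebraic route has a genuine gap at exactly the point you flag as the ``key identity.'' You propose to conclude by ``combining $0\le\Box e\le 1$ with an $f$-ring identity asserting that any element of $[0,1]$ which \ldots arises as $\Box$ of an idempotent must itself be idempotent'' --- but that is not an identity, it is a restatement of the lemma, and nothing of the sort is available (certainly $0\le a\le 1$ is far weaker than $a^2=a$). What your plan is missing is the step that makes an algebraic proof possible: an \emph{equational, lattice-theoretic characterization of idempotency} on which the modal axioms can act. The paper first proves that $e$ is idempotent iff $1\wedge 2e=e$ (equivalently $e\wedge(1-e)=0$, using the $f$-ring fact that $a\wedge b=0$ forces $ab=0$), then applies $\Box$ to $e=1\wedge 2e$, uses (M1), and needs two further computed identities, $\Box(2a)=2\Box a(1-\Box 0)+\Box 0$ and $(1\wedge 2\Box a)\Box 0=\Box 0$, to absorb the $\Box 0$ correction terms and conclude $\Box e=1\wedge 2\Box e$, i.e.\ that $\Box e$ is idempotent. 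Without some such reformulation (others would do, e.g.\ $e=(2e-1)^+$ combined with (M3)), your (M1)/(M3) manipulations never get traction on the multiplicative equation $(\Box e)^2=\Box e$; the identity $e\wedge x=ex$ you mention relates $e$ to other elements but says nothing about $\Box e$ being idempotent, and an ``idempotent-multiplier version of (M5)'' such as $\Box(ea)=\Box e\,\Box a$ is exactly as hard as the lemma itself (setting $a=e$ gives the lemma instantly), so invoking it without a derivation is circular. Note also that a real-scalar approximation cannot bridge the gap, since idempotents are not limits of scalars.

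By contrast, the fallback you mention only in passing --- transferring the problem to $C(Y_A)$ --- does work and is a genuinely different, shorter proof than the paper's. Since this lemma occurs after Section~\ref{sec: duality}, you may use that $\zeta_A:A\to C(Y_A)$ is an injective $\mbal$-morphism, i.e.\ $\zeta_A(\Box a)=\Box_{R_\Box}\zeta_A(a)$, which is established in the proof of Proposition~\ref{lem: reflective} and used in Theorem~\ref{thm: mbal}. An idempotent of $C(Y_A)$ is exactly a $\{0,1\}$-valued continuous function, and $\Box_{R_\Box}$ manifestly preserves such functions (an infimum over a nonempty set of values in $\{0,1\}$ lies in $\{0,1\}$, and the value on $E_A$ is $1$); injectivity of $\zeta_A$ then pulls $\zeta_A(\Box e)^2=\zeta_A(\Box e)$ back to $(\Box e)^2=\Box e$. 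Had you committed to this route and supplied these three ingredients, the proof would be complete and would buy brevity at the cost of invoking the duality machinery, whereas the paper's proof is purely equational. As written, however, you explicitly favor the algebraic route, whose central step is circular, and you leave the viable route undeveloped.
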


\begin{proof}
First observe that $e \in A$ is an idempotent iff $1 \wedge 2e = e$. To see this, if $e$ is an idempotent, by Remark~\ref{rem: Birkhoff},
\[
(1 \wedge 2e) - e = (1-e) \wedge e = \lnot e \wedge e = 0.
\]
Therefore, $1 \wedge 2e = e$. Conversely, suppose that $1 \wedge 2e = e$. Then $(1-e) \wedge e = 0$ by the same calculation. Since each
$A \in \bal$ is an $f$-ring (see, e.g., \cite[Lem.~XVII.5.2]{Bir79}), from $(1-e) \wedge e = 0$ it follows that $(1-e)e = 0$ (see, e.g.,
\cite[Lem.~XVII.5.1]{Bir79}). Thus, $e^2 = e$, and hence $e$ is an idempotent.

For each $a \in A$, by (M5), (M2), and Lemma~\ref{lem: properties}(4) we have
\[
\Box (2 a) = \Box 2 \Box a=(2-\Box 0)\Box a=(2-2\Box 0+\Box 0)\Box a=2 \Box a (1-\Box 0)+\Box 0.
\]
By Lemma~\ref{lem: properties}(3), $\Box 0 \ge 0$, so Lemma~\ref{lem: properties}(4) and Remark~\ref{rem: Birkhoff} imply
\[
(1 \wedge 2 \Box a)\Box 0=\Box 0 \wedge 2\Box a \Box 0=\Box 0 \wedge 2\Box 0=\Box 0.
\]
Now suppose $e$ is an idempotent, so $e = 1 \wedge 2e$. Since  $\Box 0\le\Box 1=1$, we have $1-\Box 0 \ge 0$.  Thus, by
Remark~\ref{rem: Birkhoff} and the two identities just proved,
\begin{align*}
\Box e &= \Box (1 \wedge 2e)=1 \wedge \Box(2e)  \\
&= ((1-\Box 0)+\Box 0) \wedge \Box(2e) \\
&= ((1-\Box 0)+\Box 0) \wedge (2 \Box e (1-\Box 0)+\Box 0) \\
&= ((1-\Box 0) \wedge 2 \Box e (1-\Box 0))+\Box 0 \\
&= (1 \wedge 2 \Box e)(1-\Box 0)+\Box 0 \\
&= (1 \wedge 2 \Box e)(1-\Box 0)+(1 \wedge 2 \Box e)\Box 0 \\
&= 1 \wedge 2 \Box e.
\end{align*}
Therefore, $\Box e$ is idempotent.
\end{proof}

\begin{lemma}\label{lem: idempotents}
If $(A,\Box)\in\mbal$, then $(\Id(A),\Box)\in\ma$.
\end{lemma}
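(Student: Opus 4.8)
The goal is to show that $(\Id(A),\Box)\in\ma$; that is, that the restriction of $\Box$ to $\Id(A)$ is a modal operator on the boolean algebra $\Id(A)$. By Lemma~\ref{lem: Box on Id(A)} we already know that $\Box$ maps $\Id(A)$ into $\Id(A)$, so the restriction is a well-defined unary function on $\Id(A)$. It therefore remains only to verify the two defining properties of a modal operator on a boolean algebra: that $\Box$ preserves finite meets of idempotents, and that $\Box$ preserves the top element.

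First I would handle preservation of the top element: $\Box 1 = 1$ is already available from Lemma~\ref{lem: properties}(2), and this is exactly preservation of the empty meet (the unit of $\Id(A)$). Next I would address binary meets. The subtlety here is that the meet in $\Id(A)$ is the \emph{boolean} meet $e\wedge f=ef$, whereas axiom (M1) asserts preservation of the \emph{lattice} meet $e\meet f$ in $A$. The key observation—pointed out in the excerpt—is that for $A\in\bal$ the lattice operations on $A$ restrict to the boolean operations on $\Id(A)$, so for idempotents $e,f$ we have $e\meet f=ef$ (the lattice meet and the ring product coincide on $\Id(A)$). Granting this, (M1) gives $\Box(e\wedge f)=\Box(e\meet f)=\Box e\meet\Box f$, and since $\Box e,\Box f$ are again idempotents by Lemma~\ref{lem: Box on Id(A)}, the right-hand side is the boolean meet $\Box e\wedge\Box f$ in $\Id(A)$. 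Thus $\Box$ preserves binary meets, and together with $\Box 1=1$ this shows $\Box$ is a meet-hemimorphism on $\Id(A)$, which is precisely what is required for $(\Id(A),\Box)$ to be a modal algebra.

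I do not expect any serious obstacle here, since the heavy lifting—that $\Box$ preserves idempotence—has already been carried out in Lemma~\ref{lem: Box on Id(A)}. The only point requiring care is the identification of the boolean meet on $\Id(A)$ with the lattice meet inherited from $A$, so that (M1) can be invoked directly; this is a standard fact about idempotents in $f$-rings and is asserted in the remark preceding Lemma~\ref{lem: Box on Id(A)}. Once that identification is in hand, the verification is an immediate application of (M1), Lemma~\ref{lem: properties}(2), and Lemma~\ref{lem: Box on Id(A)}.
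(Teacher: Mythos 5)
Your proof is correct and follows essentially the same route as the paper: well-definedness of $\Box$ on $\Id(A)$ via Lemma~\ref{lem: Box on Id(A)}, preservation of the unit via Lemma~\ref{lem: properties}(2), and preservation of binary meets via (M1) together with the fact that the lattice meet on $A$ restricts to the boolean meet on $\Id(A)$. The paper's proof is just a terser statement of exactly these steps, leaving implicit the identification of meets that you spell out.
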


\begin{proof}
Since $A\in\bal$, we have that $\Id(A)$ is a boolean algebra.
By Lemma~\ref{lem: Box on Id(A)}, $\Box$ is well defined on $\Id(A)$. That $\Box$ preserves finite meets in $\Id(A)$ follows from (M1) and
Lemma~\ref{lem: properties}(2). Thus, $(\Id(A),\Box)\in\ma$.
\end{proof}

Define $\Id:\mbal\to\ma$ by sending $(A,\Box)\in\mbal$ to $(\Id(A),\Box)\in\ma$ and a morphism $A\to B$ in $\mbal$ to its restriction
$\Id(A)\to\Id(B)$. The next lemma is an easy consequence of Lemma~\ref{lem: idempotents}.

\begin{lemma}\label{lem: Id}
$\Id:\mbal\to\ma$ is a well-defined covariant functor.
\end{lemma}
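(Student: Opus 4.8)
The plan is to verify the three things required for $\Id$ to be a well-defined covariant functor: that it acts correctly on objects, that it acts correctly on morphisms, and that it respects composition and identities. The object part is exactly the content of Lemma~\ref{lem: idempotents}, which tells us that $(\Id(A),\Box)\in\ma$ whenever $(A,\Box)\in\mbal$. So the only genuine work concerns morphisms.

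First I would check that the assignment on morphisms is well defined. Let $\alpha:A\to B$ be a morphism in $\mbal$, so $\alpha$ is a unital $\ell$-algebra homomorphism commuting with $\Box$. Since $\alpha$ is in particular a ring homomorphism, for each idempotent $e$ we have $\alpha(e)^2=\alpha(e^2)=\alpha(e)$, so $\alpha$ restricts to a map $\Id(A)\to\Id(B)$. Because the boolean operations on $\Id(A)$ and $\Id(B)$ are expressed through the ring operations and the unit via $e\wedge f=ef$, $e\vee f=e+f-ef$, and $\neg e=1-e$, and $\alpha$ preserves products, sums, and the unit, this restriction is automatically a boolean homomorphism. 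Next I would confirm that the restriction preserves $\Box$: by Lemma~\ref{lem: Box on Id(A)}, $\Box$ sends idempotents to idempotents in both $A$ and $B$, so the operator $\Box$ occurring in $(\Id(A),\Box)$ and $(\Id(B),\Box)$ is simply the restriction of the ambient modal operator. Since $\alpha(\Box a)=\Box\alpha(a)$ holds for all $a\in A$, it holds in particular for idempotents, and therefore the restricted map commutes with $\Box$. Hence the restriction is a morphism in $\ma$.

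Finally, functoriality is immediate: the restriction of a composite $\beta\circ\alpha$ to idempotents is the composite of the restrictions, and the restriction of an identity map is the identity on $\Id(A)$, so $\Id$ preserves composition and identity morphisms. I do not anticipate any real obstacle, since the only substantive point—that $\Box$ descends to a genuine operation on the idempotents—was already settled in Lemma~\ref{lem: Box on Id(A)}; the remaining steps are just the routine transfer of the ring-theoretic and modal structure through the homomorphism $\alpha$, which is why the statement is flagged as an easy consequence of Lemma~\ref{lem: idempotents}.
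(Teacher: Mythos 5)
Your proof is correct and takes the same route the paper intends: the paper gives no explicit argument, stating only that the lemma is an easy consequence of Lemma~\ref{lem: idempotents}, and your write-up simply fills in the routine details of that derivation. In particular, your checks that a ring homomorphism restricts to a boolean homomorphism on idempotents, that the restriction commutes with $\Box$ (using Lemma~\ref{lem: Box on Id(A)}), and that composition and identities are preserved are exactly the omitted verifications.
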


We recall (see \cite{McG06} and the references therein) that a commutative ring $A$ is \emph{clean} if each element is the sum of an idempotent and a unit.

\begin{definition}
Let $\cubal$ be the full subcategory of $\ubal$ consisting of those $A \in \ubal$ where $A$ is clean.
\end{definition}

\begin{remark}\label{rem: cubal}
By Stone duality for boolean algebras and \cite[Prop.~5.20]{BMO13a}, the following diagram commutes (up to natural isomorphism),
and the functor $\Id$ yields an equivalence of $\cubal$ and $\ba$.
\[
\begin{tikzcd}[row sep = 3pc]
\cubal
\arrow[rr, "\Id"] \arrow[dr, shift left = .5ex, "(-)_*"] && \ba \arrow[dl, shift left = .5ex, "(-)_*"] \\
& \Stone \arrow[ur, shift left = .5ex, "(-)^*"] \arrow[lu, shift left = .5ex, "(-)^*"]&
\end{tikzcd}
\]
\end{remark}

\begin{definition}
Let $\mcubal$ be the full subcategory of $\mubal$ consisting of those $(A,\Box) \in \mubal$ where $A$ is clean.
\end{definition}

As a corollary of Theorems~\ref{thm: mbal}, \ref{thm: EG-theorem} and Remark~\ref{rem: cubal}, we obtain:

\begin{theorem}\label{thm: clean}
The diagram below commutes $($up to natural isomorphism$)$ and the functor $\Id$ yields an equivalence of $\mcubal$ and $\ma$.
\[
\begin{tikzcd}[row sep = 3pc]
\mcubal
\arrow[rr, "\Id"] \arrow[dr, shift left = .5ex, "(-)_*"] && \ma \arrow[dl, shift left = .5ex, "(-)_*"] \\
&\DF \arrow[ur, shift left = .5ex, "(-)^*"] \arrow[lu, shift left = .5ex, "(-)^*"] &
\end{tikzcd}
\]
\end{theorem}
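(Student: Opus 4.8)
The plan is to deduce the result by composing the two dual equivalences already at our disposal: the dual equivalence between $\mubal$ and $\KHK$ from Theorem~\ref{thm: mbal}, and the Esakia--Goldblatt dual equivalence between $\ma$ and $\DF$ from Theorem~\ref{thm: EG-theorem}. The main step is to check that $(-)_*$ and $(-)^*$ restrict to a dual equivalence between $\mcubal$ and $\DF$; granting this, the functor $\Id$ is recognized, up to natural isomorphism, as the composite $\mcubal \xrightarrow{(-)_*} \DF \xrightarrow{(-)^*} \ma$, which is a composite of two equivalences and hence an equivalence.

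First I would verify the restriction. Since $\DF$ is the full subcategory of $\KHK$ consisting of those frames whose underlying space is a Stone space, it suffices to match objects. If $(A,\Box)\in\mcubal$, then $A$ is clean and uniformly complete, so $Y_A\in\Stone$ by Remark~\ref{rem: cubal}; as $R_\Box$ is continuous by Theorem~\ref{thm: *}, we get $(Y_A,R_\Box)=(A,\Box)_*\in\DF$. Conversely, if $(X,R)\in\DF$, then $X\in\Stone$ forces $C(X)$ to be clean (again Remark~\ref{rem: cubal}), and $C(X)$ is uniformly complete, so $(C(X),\Box_R)=(X,R)^*\in\mcubal$. The unit $\zeta$ and counit $\varepsilon$ of the dual equivalence of Theorem~\ref{thm: mbal} are isomorphisms on these subcategories, since they already are on $\mubal$ and $\KHK$; hence $(-)_*$ and $(-)^*$ cut down to a dual equivalence between $\mcubal$ and $\DF$.

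The heart of the argument is to identify the composite $(-)^*\circ(-)_*\colon\mcubal\to\ma$ with $\Id$. Fix $(A,\Box)\in\mcubal$. By Remark~\ref{rem: cubal} the assignment $e\mapsto\cozer(e)$ is a (natural) boolean isomorphism $\Id(A)\to\Clop(Y_A)$, and I claim it intertwines the modal operators, i.e.\ $\Box_{R_\Box}\cozer(e)=\cozer(\Box e)$ for every idempotent $e$. Indeed, each idempotent satisfies $e\ge 0$, so using the definition $\Box_R U=X\setminus R^{-1}[X\setminus U]$ together with $Y_A\setminus\cozer(e)=\zer(e)$ and Lemma~\ref{rinvclosed}(1),
\[
\Box_{R_\Box}\cozer(e)=Y_A\setminus R_\Box^{-1}[\zer(e)]=Y_A\setminus\zer(\Box e)=\cozer(\Box e),
\]
which is clopen because $\Box e$ is again an idempotent by Lemma~\ref{lem: Box on Id(A)}, and corresponds to $\Box e$ under the isomorphism. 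Hence $((A,\Box)_*)^*=(\Clop(Y_A),\Box_{R_\Box})\cong(\Id(A),\Box)=\Id(A,\Box)$ as modal algebras, and this isomorphism is natural since its underlying boolean isomorphism is natural by Remark~\ref{rem: cubal} and all the functors involved preserve $\Box$.

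Combining the pieces finishes the proof: $\Id\cong(-)^*\circ(-)_*$ is a composite of equivalences, hence an equivalence, and this natural isomorphism is precisely the commutativity of the diagram (equivalently, applying the Esakia--Goldblatt functor $(-)_*\colon\ma\to\DF$ and using its counit $(-)_*\circ(-)^*\cong 1_{\DF}$ yields $(-)_*\circ\Id\cong(-)_*$). I expect the only delicate point to be the intertwining identity of the previous paragraph: everything formal follows once one knows that the non-modal boolean isomorphism $\Id(A)\cong\Clop(Y_A)$ is in fact an isomorphism of modal algebras, and this in turn hinges on Lemma~\ref{rinvclosed}(1), valid precisely because idempotents are nonnegative.
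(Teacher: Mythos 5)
Your proposal is correct and follows exactly the route the paper intends: the paper states this theorem as an immediate corollary of Theorem~\ref{thm: mbal}, Theorem~\ref{thm: EG-theorem}, and Remark~\ref{rem: cubal}, and your argument simply fills in the details of that composition, including the one point the paper leaves implicit (that the boolean isomorphism $\Id(A)\cong\Clop(Y_A)$ intertwines the modal operators, via Lemma~\ref{rinvclosed}(1) and the nonnegativity of idempotents).
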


\section{Some correspondence results} \label{subsec: correspondence}

In this section we take the first steps towards the correspondence theory for $\mbal$ by characterizing algebraically what it takes for the
relation $R_\Box$ to satisfy additional first-order properties, such as seriality, reflexivity, transitivity, and symmetry.

\begin{convention}
To simplify notation, we denote the dual $(Y_A, R_\Box)$ of $(A, \Box) \in \mbal$ simply by $(Y, R)$.
\end{convention}

We recall that a relation $R$ on $X$ is \emph{serial} if $R[x] \ne \varnothing$ for each $x \in X$.

\begin{proposition}\label{prop: correspondence for (A, Box) serial}
Let $(A, \Box) \in \mbal$. Then $R$ is serial iff $\Box 0=0$ in $A$.
\end{proposition}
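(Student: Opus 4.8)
The plan is to reduce the statement entirely to the description of $D_A$ already obtained in Lemma~\ref{rinvclosed}(2), combined with the fact that the Yosida representation $\zeta_A$ is a monomorphism.

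First I would record the purely set-theoretic reformulation of seriality. By definition $R = R_\Box$ is serial iff $R[x] \ne \varnothing$ for every $x \in Y_A$, which by Notation~\ref{not: D and E} is exactly the condition $D_A = Y_A$ (recall that $D_A = R_\Box^{-1}[Y_A]$ is precisely the set of $x$ with $R_\Box[x] \ne \varnothing$). Thus the task is to show that $D_A = Y_A$ iff $\Box 0 = 0$.

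Next I would invoke Lemma~\ref{rinvclosed}(2), which gives $D_A = \zer(\Box 0)$. Hence $R$ is serial iff $\zer(\Box 0) = Y_A$, that is, iff $\Box 0 \in x$ for every maximal $\ell$-ideal $x$ of $A$. The forward implication is then immediate: if $\Box 0 = 0$, then since $0 \in x$ for every $x$ we have $\zer(\Box 0) = \zer(0) = Y_A$, so $D_A = Y_A$ and $R$ is serial. For the converse, suppose $R$ is serial, so $\Box 0$ lies in every maximal $\ell$-ideal. Recalling that $\zeta_A(a)(x) = 0$ iff $a \in x$, this says that $\zeta_A(\Box 0)$ vanishes at every point of $Y_A$, i.e. $\zeta_A(\Box 0) = 0$ in $C(Y_A)$. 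Since $\zeta_A$ is a monomorphism (Section~\ref{sec: Gelfand duality}), we conclude $\Box 0 = 0$.

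I expect no genuine obstacle here: the entire content is packaged into Lemma~\ref{rinvclosed}(2) together with the injectivity of $\zeta_A$. The only point deserving attention is the identification of ``$\Box 0$ belongs to every maximal $\ell$-ideal'' with ``$\Box 0 = 0$'', which rests on the fact that the intersection of all maximal $\ell$-ideals of $A \in \bal$ is trivial, equivalently that $\zeta_A$ separates points of $Y_A$. Everything else is a direct rewriting.
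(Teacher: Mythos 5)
Your proposal is correct, and one half of it coincides with the paper's argument: for $\Box 0 = 0 \Rightarrow R$ serial, the paper does exactly what you do, namely invoke Lemma~\ref{rinvclosed}(2) to get $D_A = \zer(\Box 0) = \zer(0) = Y_A$. The difference is in the other direction. The paper proves that $R$ serial implies $\Box 0 = 0$ by using the fact that $\zeta_A$ embeds $(A,\Box)$ into $(C(Y),\Box_R)$ as a \emph{modal} morphism (the computation $\zeta_A(\Box a) = \Box_{R_\Box}\zeta_A(a)$ established in the proof of Proposition~\ref{lem: reflective}), and then computing pointwise that seriality forces $(\Box_R 0)(x) = \inf 0R[x] = 0$, hence $\Box_R 0 = 0$ in $C(Y)$, hence $\Box 0 = 0$ in $A$ by injectivity. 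You instead run Lemma~\ref{rinvclosed}(2) in this direction too: seriality says $Y_A = D_A = \zer(\Box 0)$, so $\Box 0$ lies in every maximal $\ell$-ideal, and the triviality of $\bigcap Y_A$ (equivalently, injectivity of $\zeta_A$ as a $\bal$-morphism, which is pure Gelfand duality) gives $\Box 0 = 0$. Your route is slightly leaner for this proposition: it never needs $\zeta_A$ to commute with $\Box$, and in fact for this direction it only needs the easy inclusion $D_A \subseteq \zer(\Box 0)$, which is Lemma~\ref{lem: cosets}(1), rather than the hard inclusion resting on Proposition~\ref{prop: difficult}. What the paper's choice buys is uniformity: the embedding-into-$(C(Y),\Box_R)$ template is reused verbatim in Propositions~\ref{prop: correspondence for (A, Box) reflexive}--\ref{prop: correspondence for (A, Box) symmetric}, where the pointwise computation in $C(Y)$ is essential and no analogue of your semisimplicity shortcut is available.
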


\begin{proof}
Suppose that $R$ is serial. Then $R[x] \ne \varnothing$, so $(\Box_{R} 0)(x)=  0$ for each $x \in Y$. Thus, $\Box_{R} 0 = 0$. Since $(A, \Box)$ embeds into $(C(Y),\Box_{R})$, we conclude that $\Box 0=0$ in $A$.
Conversely, suppose that $\Box 0 = 0$ in $A$. Since $Y=\zer(0)$, by Lemma~\ref{rinvclosed}(2), we have $D_A=\zer(\Box 0)=\zer(0)=Y$.
Thus, $R$ is serial.
\end{proof}

\begin{proposition}\label{prop: correspondence for (A, Box) reflexive}
Let $(A, \Box) \in \mbal$. Then $R$ is reflexive iff $\Box a \le a$ for each $a \in A$.
\end{proposition}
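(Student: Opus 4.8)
My plan is to unwind reflexivity through the definition of $R_\Box$ and then argue the two implications separately. Reflexivity of $R$ means $x R_\Box x$ for every $x \in Y$, and by Definition~\ref{def: R_box on Y_A} this says $x^+ \subseteq \Box^{-1} x$; concretely, $R$ is reflexive iff for every $x \in Y$ and every $0 \le a \in x$ we have $\Box a \in x$. Both directions will be verified against this reformulation.

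For the direction ($\Leftarrow$), suppose $\Box a \le a$ for all $a \in A$, fix $x \in Y$, and take $0 \le a \in x$. By Lemma~\ref{lem: properties}(3) we have $\Box a \ge 0$, so the hypothesis gives $0 \le \Box a \le a$ and hence $|\Box a| = \Box a \le a = |a|$. Since $x$ is an $\ell$-ideal containing $a$, the domination property of $\ell$-ideals yields $\Box a \in x$. Thus $x R_\Box x$ for every $x$, so $R$ is reflexive.

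For ($\Rightarrow$), I would pass to the Yosida representation. Assuming $R$ reflexive, each $x$ satisfies $x \in R[x]$, so $R[x] \neq \varnothing$ and, setting $f = \zeta_A(a) \in C(Y)$, Definition~\ref{def: Box_R} gives $(\Box_R f)(x) = \inf f(R[x]) \le f(x)$ because $x \in R[x]$. Hence $\Box_R \zeta_A(a) \le \zeta_A(a)$ pointwise on $Y$. Recalling from the proof of Proposition~\ref{lem: reflective} that $\zeta_A(\Box a) = \Box_{R_\Box}\zeta_A(a)$, this reads $\zeta_A(\Box a) \le \zeta_A(a)$, and since $\zeta_A$ is a monomorphism in $\bal$ it reflects order (from $\zeta_A(\Box a) \le \zeta_A(a)$ we get $\zeta_A(\Box a \wedge a) = \zeta_A(\Box a)$, and injectivity forces $\Box a \wedge a = \Box a$, i.e.\ $\Box a \le a$). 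This completes the argument. The proof has no serious obstacle; the only points to handle with care are invoking $\Box a \ge 0$ in ($\Leftarrow$) so that $|\Box a|$ reduces to $\Box a$, and, in ($\Rightarrow$), using injectivity of $\zeta_A$ to reflect (not merely preserve) the order inequality obtained in $C(Y)$---the substantive input being the representation identity $\zeta_A(\Box a) = \Box_{R_\Box}\zeta_A(a)$ established earlier.
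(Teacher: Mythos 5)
Your proof is correct and follows essentially the same route as the paper: the converse direction uses $\Box a \ge 0$ (Lemma~\ref{lem: properties}(3)) together with the $\ell$-ideal property of $x$ to get $x^+ \subseteq \Box^{-1}x$, and the forward direction embeds $(A,\Box)$ into $(C(Y),\Box_{R})$ via $\zeta_A$ and uses $x \in R[x]$ to bound the infimum. The only difference is that you spell out explicitly what the paper leaves implicit, namely the identity $\zeta_A(\Box a)=\Box_{R_\Box}\zeta_A(a)$ from Proposition~\ref{lem: reflective} and the fact that the injective lattice homomorphism $\zeta_A$ reflects order.
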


\begin{proof}
Suppose that $R$ is reflexive and $f \in C(Y)$. For each $x \in Y$, we have $x \in R[x]$. Thus, $(\Box_{R} f)(x) = \inf fR[x] \le f(x)$.  Since $(A, \Box)$ embeds into $(C(Y),\Box_{R})$, we conclude that $\Box a \le a$ for each $a \in A$.
Conversely, suppose $\Box a \le a$ for each $a\in A$. Let $x \in Y$ and $a \in x^+$. Then $0 \le \Box a \le a \in x$. Thus, $x^+ \subseteq \Box^{-1} x$,
and so $x R x$.
\end{proof}

\begin{proposition}\label{prop: correspondence for (A, Box) transitive}
Let $(A, \Box) \in \mbal$. Then $R$ is transitive iff $\Box a \le \Box(\Box a(1-\Box 0)+a\Box 0)$ for each $a\in A$.
\end{proposition}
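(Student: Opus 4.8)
The plan is to prove both directions by the two-pronged strategy used in the preceding correspondence results: for the implication ``$R$ transitive $\Rightarrow$ inequality'' I will verify the inequality inside $(C(Y),\Box_R)$ and then pull it back along the embedding $\zeta_A\colon(A,\Box)\hookrightarrow(C(Y),\Box_R)$ supplied by Theorem~\ref{thm: mbal}; for the converse I will argue directly with maximal $\ell$-ideals and the definition of $R_\Box$.

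For the forward direction, recall that $\Box_R 0$ is the characteristic function of $E$ (it is $0$ on $D$ and $1$ on $E$, where $D_A=\zer(\Box 0)$ by Lemma~\ref{rinvclosed}(2)), so $1-\Box_R 0$ is the characteristic function of $D$. Hence for $f\in C(Y)$ the element $g:=\Box_R f\,(1-\Box_R 0)+f\,\Box_R 0$ is the ``patched'' function equal to $\Box_R f$ on $D$ and to $f$ on $E$; it lies in $C(Y)$ since $D,E$ are clopen and $\Box_R f\in C(Y)$ by Lemma~\ref{lem: Box on C}. I will then show $\Box_R f\le\Box_R g$ pointwise. At a point $x\in E$ both sides equal $1$. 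At $x\in D$ I must check $g(y)\ge\min fR[x]$ for every $y\in R[x]$: if $y\in E$ this is immediate since $g(y)=f(y)$ and $y\in R[x]$, while if $y\in D$ then $g(y)=\min fR[y]$ and transitivity gives $R[y]\subseteq R[x]$, whence $\min fR[y]\ge\min fR[x]$. Taking the infimum over $y\in R[x]$ yields $\Box_R g(x)\ge\Box_R f(x)$. Specializing $f=\zeta_A(a)$ and using that $\zeta_A$ preserves $\Box$ and the ring operations, $g=\zeta_A\big(\Box a(1-\Box 0)+a\Box 0\big)$, so $\zeta_A(\Box a)\le\zeta_A\big(\Box(\Box a(1-\Box 0)+a\Box 0)\big)$; as $\zeta_A$ is an order-embedding, the desired inequality holds in $A$.

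For the converse, assume the inequality and let $xR_\Box y$ and $yR_\Box z$; I must show $xR_\Box z$, that is, $\Box a\in x$ for every $a\in z^+$. Fix such an $a$. Since $yR_\Box z$ we have $a\in z^+\subseteq\Box^{-1}y$, so $\Box a\in y$; moreover $y\in D_A$, so $\Box 0\in y$ by Lemma~\ref{lem: cosets}(1). Put $c=\Box a(1-\Box 0)+a\Box 0$. Then $c\ge 0$ (because $\Box a\ge 0$ by Lemma~\ref{lem: properties}(3), $0\le\Box 0\le\Box 1=1$, and $a\ge 0$), and $c\in y$ since $\Box a\in y$ and $\Box 0\in y$ force both summands into the ideal $y$. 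Thus $c\in y^+$, and $xR_\Box y$ gives $\Box c\in x$. Finally the hypothesis yields $0\le\Box a\le\Box c\in x$, so $\Box a\in x$ because $x$ is an $\ell$-ideal, completing the argument.

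The main obstacle is the forward direction: one must recognize that $\Box a(1-\Box 0)+a\Box 0$ is exactly the box ``patched to behave like the identity on the non-serial part $E$'', and then see that transitivity enters precisely through the inclusion $R[y]\subseteq R[x]$ for $y\in R[x]$. The extra term $a\Box 0$, absent from the classical axiom $\Box a\le\Box\Box a$, is what makes the statement correct when $\Box 0\ne 0$, and the careful split between $D$ and $E$ is the delicate bookkeeping.
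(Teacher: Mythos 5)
Your proof is correct and follows essentially the same route as the paper's: the forward direction verifies the inequality pointwise in $(C(Y),\Box_R)$ by splitting both $x$ and the successors $y\in R[x]$ according to the clopen partition $D\cup E$, with transitivity entering exactly through $R[y]\subseteq R[x]$, and then pulls the inequality back along the order-embedding $\zeta_A$; the converse argues directly with maximal $\ell$-ideals, showing $c=\Box a(1-\Box 0)+a\Box 0$ is a nonnegative element of $y$ so that $\Box c\in x$, and concludes $\Box a\in x$ from the $\ell$-ideal property. The only cosmetic difference is that you invoke Lemma~\ref{lem: cosets}(1) to get $\Box 0\in y$, where the paper notes $0\in z^+$ directly, which is the same argument.
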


\begin{proof}
First suppose that $R$ is transitive. Let $f \in C(Y)$ and $x \in Y$. If $R[x] = \varnothing$, then by definition of $\Box_R$
\[
(\Box_R f)(x)=1=\Box_R(\Box_R f(1-\Box_R 0)+f\Box_R 0)(x).
\]
Suppose that $R[x] \ne \varnothing$. Then $(\Box_R f)(x)= \inf fR[x]$ and
\[
\Box_R(\Box_R f(1-\Box_R 0)+f\Box_R 0)(x)= \inf \{(\Box_R f)(y)(1-\Box_R 0)(y)+f(y)(\Box_R 0)(y) \mid xRy \}.
\]
We have
\[
(\Box_R f)(y)(1-\Box_R 0)(y)+f(y)(\Box_R 0)(y) = \left\{\begin{array}{ll} f(y) & \mbox{if }R[y]=\varnothing\\ (\Box_R f)(y) &
\mbox{if }R[y]\ne \varnothing.\end{array}\right.
\]
It is therefore sufficient to prove that, for each $y \in R[x]$, if $R[y]=\varnothing$ then $(\Box_R f)(x) \le f(y)$ and if
$R[y] \ne \varnothing$ then $(\Box_R f)(x) \le (\Box_R f)(y)$. Suppose $R[y]=\varnothing$. Since $R[x] \ne \varnothing$, we have
\[
(\Box_R f)(x)= \inf \{ f(z) \mid z \in R[x] \} \le f(y).
\]
If $R[y] \ne \varnothing$, then by transitivity of $R$ we have $R[y] \subseteq R[x]$, so
\[
(\Box_R f)(x)= \inf \{ f(z) \mid z \in R[x] \} \le \inf \{ f(w) \mid w \in R[y] \} = (\Box_R f)(y).
\]
Thus, $\Box_R f \le \Box_R(\Box_R f(1-\Box_R 0)+f\Box_R 0)$.
Since $(A, \Box)$ embeds into $(C(Y),\Box_{R})$, we conclude that $\Box a \le \Box(\Box a(1-\Box 0)+a\Box 0)$ for each $a\in A$.

Conversely, suppose $\Box a \le \Box(\Box a(1-\Box 0)+a\Box 0)$ for each $a\in A$. Let $x,y,z \in Y$ with $x R y$ and $y R z$. Then $y^+ \subseteq \Box^{-1} x$ and $z^+ \subseteq \Box^{-1} y$. Suppose that $a \in z^+$. Then $\Box a \in y^+$. Since $0 \in z^+$, we have $\Box 0 \in y^+$. Thus, since $y$ is an ideal, $\Box a(1-\Box 0)+a \Box 0 \in y$. Because $\Box a(1-\Box 0)+a \Box 0 \ge 0$, we have $\Box(\Box a(1-\Box 0)+a \Box 0) \in x$. By hypothesis,
$0 \le \Box a \le \Box(\Box a(1-\Box 0)+a\Box 0) \in x$. Thus, $\Box a \in x$. This shows that $z^+ \subseteq \Box^{-1} x$, and hence $x R z$.
\end{proof}

\begin{proposition}\label{prop: correspondence for (A, Box) symmetric}
Let $(A, \Box) \in \mbal$. Then $R$ is symmetric iff $\Diamond \Box a(1-\Box 0) \le a(1-\Box 0) $ for each $a\in A$.
\end{proposition}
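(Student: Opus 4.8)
The plan is to prove the two implications separately: the forward direction (symmetry $\Rightarrow$ the identity) is handled semantically in $C(Y)$ and then transferred along the embedding $\zeta_A$, while the backward direction is proved directly with the maximal $\ell$-ideals of $A$. For the forward direction, suppose $R$ is symmetric. Recalling from Lemma~\ref{rinvclosed}(2) that $\Box_R 0$ is the characteristic function of $E_A$, so that $1-\Box_R 0$ is the characteristic function of $D_A$, I would first establish the inequality $\Diamond_R\Box_R f\,(1-\Box_R 0)\le f\,(1-\Box_R 0)$ for every $f\in C(Y)$. Evaluating at a point $x$: if $x\in E_A$ both sides vanish, while if $x\in D_A$ the left-hand side equals $\sup\{(\Box_R f)(y)\mid y\in R[x]\}$. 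The key observation is that for each $y\in R[x]$ symmetry gives $yRx$, whence $R[y]\ne\varnothing$ and $x\in R[y]$, so $(\Box_R f)(y)=\inf fR[y]\le f(x)$; taking the supremum over $y\in R[x]$ yields the claim. Since $\zeta_A\colon(A,\Box)\to(C(Y),\Box_R)$ is an $\mbal$-embedding (Theorem~\ref{thm: mbal}), it preserves $\Box$, $\Diamond$, products, and $\Box 0$, and reflects the order; applying the $C(Y)$-inequality to $f=\zeta_A(a)$ and pulling back then gives $\Diamond\Box a(1-\Box 0)\le a(1-\Box 0)$ in $A$.

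For the backward direction, assume the algebraic inequality holds for all $a$, and let $x,y\in Y$ with $xRy$; I would show $yRx$, i.e. $x^+\subseteq\Box^{-1}y$. Fix $a\in x^+$ and suppose toward a contradiction that $\Box a\notin y$. Since $\Box a\ge 0$ by Lemma~\ref{lem: properties}(3), we have $(\Box a)^+=\Box a\notin y$, so the contrapositive of Lemma~\ref{lemmadiamond}, applied to $\Box a$ and $xRy$, yields $\Diamond\Box a\notin x$. Because $xRy$ gives $x\in D_A$, Lemma~\ref{lem: cosets}(1) gives $\Box 0\in x$, and since $A/x\cong\mathbb R$ it follows that $\Diamond\Box a(1-\Box 0)\notin x$ as well. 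On the other hand, $a\in x$ forces $a(1-\Box 0)\in x$, and both $\Diamond\Box a(1-\Box 0)$ and $a(1-\Box 0)$ are nonnegative (using that $\Diamond$ is order preserving and $0\le 1-\Box 0$). As $x$ is an $\ell$-ideal, the hypothesis $0\le\Diamond\Box a(1-\Box 0)\le a(1-\Box 0)\in x$ forces $\Diamond\Box a(1-\Box 0)\in x$, a contradiction. Hence $\Box a\in y$, so $yRx$, and $R$ is symmetric.

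I expect the backward direction to be the main obstacle. The decisive move is to convert the failure $\Box a\notin y$ into $\Diamond\Box a\notin x$ via Lemma~\ref{lemmadiamond}, and the factor $1-\Box 0$ must be carried throughout in order to neutralize the dead-end points of $E_A$: it is precisely this factor that makes the membership computation modulo the prime ideal $x$ go through, and recognizing where it is needed is the crux. By contrast, the forward direction is routine once one notes that $1-\Box_R 0=\chi_{D_A}$ and uses symmetry to bound $(\Box_R f)(y)$ by $f(x)$, the transfer to $A$ being immediate from the fact that $\zeta_A$ is an order-reflecting $\mbal$-embedding.
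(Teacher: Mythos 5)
Your proof is correct, and the forward direction coincides with the paper's argument: both verify the inequality pointwise in $C(Y)$ (trivially on $E_A$, via symmetry and $(\Box_R f)(y)\le f(x)$ on $D_A$) and transfer it along the order-reflecting $\mbal$-embedding $\zeta_A$. The backward direction, however, is organized differently, and in a way worth noting. The paper argues from $\Box a\notin y$ by choosing $0<\lambda\in\mathbb R$ with $\lambda-\Box a\in y$ and then running an explicit coset computation — using (M3) and Lemma~\ref{lem: cosets}(2),(4) to get $(\lambda-\Diamond\Box a)^+ + x = 0 + x$, and combining with $\Diamond\Box a + x \le a + x$ to reach $0+x<\lambda+x\le a+x$, contradicting $a\in x^+$. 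You instead observe that this computation is already packaged in Lemma~\ref{lemmadiamond}: its contrapositive (applied to $\Box a$, using $\Box a=(\Box a)^+$) turns $\Box a\notin y$ directly into $\Diamond\Box a\notin x$, hence $\Diamond\Box a(1-\Box 0)\notin x$ since $\Box 0\in x$ and the quotient map $A\to A/x$ is multiplicative; the contradiction then comes from the hypothesis together with the convexity of the $\ell$-ideal $x$ (from $0\le \Diamond\Box a(1-\Box 0)\le a(1-\Box 0)\in x$ conclude membership in $x$). This is more modular: it reuses an existing lemma rather than re-deriving its content inline, and it replaces coset arithmetic with the $\ell$-ideal absorption property, at the small cost of having to check the nonnegativity of $\Diamond\Box a(1-\Box 0)$ (which follows since $\Diamond$ is order preserving with $\Diamond 0=0$ and $\Box 0\le \Box 1 =1$). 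Both proofs are of essentially the same depth; yours makes the dependence on Lemma~\ref{lemmadiamond} explicit, while the paper's keeps the proposition self-contained modulo Lemma~\ref{lem: cosets}. One cosmetic point: the identification of $\Box_R 0$ with the characteristic function of $E_A$ follows immediately from Definition~\ref{def: Box_R}; citing Lemma~\ref{rinvclosed}(2) for it is unnecessary (that lemma concerns the zero set of the element $\Box 0\in A$, not the function $\Box_R 0$), though nothing breaks.
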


\begin{proof}
First suppose that $R$ is symmetric. Let $f \in C(Y)$ and $x \in Y$. If $R[x] = \varnothing$, then $(1-\Box_R 0)(x)=0$ so
\[
(\Diamond_R \Box_R f)(x)(1-\Box_R 0)(x) = 0 = f(x)(1-\Box_R 0)(x).
\]
If $R[x] \ne \varnothing$, then $(1-\Box_R 0)(x)=1$, so it is sufficient to prove that $(\Diamond_R \Box_R f)(x)\le f(x)$.
For any $y \in R[x]$ we have $x \in R[y]$ by symmetry. Therefore,
\[
(\Box_R f)(y) =  \inf\{f(z) \mid z \in R[y] \} \le f(x).
\]
Thus, recalling Remark~\ref{rem: sup}, we have
\[
(\Diamond_R \Box_R f)(x)  = \sup \{(\Box_R f)(y) \mid y \in R[x] \} \le f(x).
\]
Since $(A, \Box)$ embeds into $(C(Y),\Box_{R})$, we conclude that $\Diamond \Box a(1-\Box 0) \le a(1-\Box 0) $ for each $a\in A$.

Conversely, suppose $\Diamond \Box a(1-\Box 0) \le a(1-\Box 0)$ for each $a\in A$. Let $x,y \in Y$ with $x R y$. Then $y^+ \subseteq \Box^{-1} x$, so $0 \in y^+$ implies $\Box 0 \in x$. Thus,
\[
\Diamond \Box a+ x = \Diamond \Box a(1-\Box 0) + x \le  a(1-\Box 0) + x = a + x.
\]
To see that $y R x$, let $a \in x^+$. If $\Box a \notin y$, then
$0+y < \Box a +y$ because $\Box a \ge 0$. So there is $0 < \lambda \in \mathbb{R}$ such that $\lambda - \Box a \in y$.
Thus, $(\lambda - \Box a)^+ \in y^+$. Since $x R y$, by (2) and (4) of Lemma~\ref{lem: cosets}, we have
\[
(\lambda-\Diamond \Box a)^+ +x= (\lambda + \Box (- \Box a))^+ +x=(\Box (\lambda - \Box a))^+ + x=\Box (\lambda - \Box a)^+ +x=0+x.
\]
Because $\Diamond \Box a +x \le a+x$ we have $(\lambda - a) +x \le (\lambda - \Diamond\Box a) +x $. Therefore,
\[
0 \le (\lambda-a)^+ + x \le (\lambda-\Diamond \Box a)^+ +x =0+x.
\]
This implies $(\lambda-a)^+ \in x$. Thus, by Remark~\ref{rem: properties of primes}(4), $0+x < \lambda +x \le a+x$, which contradicts $a \in x^+$.
Therefore, $\Box a \in y$, which yields $x^+ \subseteq \Box^{-1} y$. Thus, $y R x$.
\end{proof}


\begin{remark}
If we work with $\Diamond$ instead of $\Box$, then Propositions~\ref{prop: correspondence for (A, Box) serial}---\ref{prop: correspondence for (A, Box) symmetric}
can be stated as follows.
\begin{enumerate}
\item $R$ is serial iff $\Diamond 1 = 1$.
\item $R$ is reflexive iff $a \le \Diamond a$ for each $a \in A$.
\item $R$ is transitive iff $\Diamond(\Diamond a +a(1-\Diamond 1)) \le \Diamond a$ for each $a \in A$.
\item $R$ is symmetric iff $\Diamond \Box a \le  a\Diamond 1$ for each $a \in A$.
\end{enumerate}
\end{remark}

\begin{remark} \label{rem: serial 1}
Let $(A, \Box) \in \mbal$. If $\Box 0 = 0$, then
the transitivity and symmetry axioms simplify to $\Box a \le \Box \Box a$ and $\Diamond \Box a \le a$, which are standard transitivity
and symmetry axioms in modal logic.
\end{remark}

\begin{definition}\label{def: notation}
\begin{enumerate}
\item[]
\item Let $\mbal^{\sf D}$ be the full subcategory of $\mbal$ consisting of objects $(A, \Box) \in \mbal$ satisfying $\Box 0 = 0$.
\item Let $\mbal^{\sf T}$ be the full subcategory of $\mbal$ consisting of objects $(A, \Box) \in \mbal$ satisfying $\Box a \le a$.
\item Let $\mbal^{\sf K4}$ be the full subcategory of $\mbal$ consisting of objects $(A, \Box) \in \mbal$ satisfying
$\Box a \le \Box(\Box a(1-\Box 0)+a\Box 0)$.
\item Let $\mbal^{\sf B}$ be the full subcategory of $\mbal$ consisting of objects $(A, \Box) \in \mbal$ satisfying
$\Diamond \Box a(1-\Box 0) \le a(1-\Box 0)$.
\item Let $\mbal^{\sf S4} = \mbal^{\sf T} \cap \mbal^{\sf K4}$.
\item Let $\mbal^{\sf S5} = \mbal^{\sf S4} \cap \mbal^{\sf B}$.
\end{enumerate}
\end{definition}

\begin{remark}
Since the reflexivity axiom implies the seriality axiom, we obtain that $(A, \Box) \in \mbal^{\sf S4}$ iff $(A, \Box) \in \mbal^{\sf T}$ and
$\Box a \le \Box \Box a$ for each $a \in A$. Similarly, $(A, \Box) \in \mbal^{\sf S5}$ iff $(A, \Box) \in \mbal^{\sf S4}$ and
$\Diamond\Box a \le a$ for each $a \in A$.
\end{remark}

\begin{remark}
The notation of Definition~\ref{def: notation} is motivated by the standard notation in modal logic:
\begin{enumerate}
\item $\sf D$ denotes the least normal modal logic containing the axiom $\Diamond \top$.
\item $\sf T$ denotes the least normal modal logic containing the axiom $\Box p\to p$.
\item $\sf K4$ denotes the least normal modal logic containing the axiom $\Box p\to\Box\Box p$.
\item $\sf B$ denotes the least normal modal logic containing the axiom $\Diamond \Box p \to p$.
\item $\sf S4$ denotes the join of $\sf T$ and $\sf K4$.
\item $\sf S5$ denotes the join of $\sf S4$ and $\sf B$.
\end{enumerate}
\end{remark}

As with the corresponding classes of modal algebras, we have the following inclusions between the subclasses of algebras in $\mbal$ given in
Definition~\ref{def: notation}:

\bigskip

\begin{center}
\begin{tikzpicture}[scale = .80]
\node at (7,8.1) {$\mbal$};
\draw (7,7.8) -- (4,6.2);
\draw (7,7.8) -- (7,6.2);
\draw (7,7.8) -- (10,6.2);
\node  at (4,6) {$\mbal^{\sf D}$};
\draw (4,5.6) -- (4,4.2);
\node at (7,6) {$\mbal^{\sf K4}$};
\draw (7,5.6) -- (7,2.2);
\node  at (10,6) {$\mbal^{\sf B}$};
\draw (10,5.6) -- (7,0.15);
\node  at (4,4.1) {$\mbal^{\sf T}$};
\draw (4,3.75) -- (7,2.2);
\node  at (7,2.1) {$\mbal^{\sf S4}$};
\draw (7,1.7) -- (7,0.15);
\node  at (7,0) {$\mbal^{\sf S5}$};
\end{tikzpicture}
\end{center}

Similarly to Definition~\ref{def: notation}, for $\sf X \in \{ D, T, K4, B, S4, S5 \}$ we define the following categories:

\begin{itemize}
\item The categories $\mubal^{\sf X}$ are defined similarly to $\mbal^{\sf X}$ but with $\mbal$ replaced by $\mubal$.
\item The categories $\mcubal^{\sf X}$ are defined similarly to $\mbal^{\sf X}$ but with $\mbal$ replaced by $\mcubal$.
\item The categories $\ma^{\sf X}$ are defined similarly to $\mbal^{\sf X}$ but with $\mbal$ replaced by $\ma$.
\item The categories $\KHK^{\sf X}$ are defined by adding the corresponding properties on the relation $R$ to the definition of $\KHK$.
\item The categories $\DF^{\sf X}$ are defined as $\KHK^{\sf X}$ by restricting $\KHK$ to $\DF$.
\end{itemize}

Theorems~\ref{thm: mbal} and~\ref{thm: clean},
Propositions~\ref{prop: correspondence for (A, Box) serial}---\ref{prop: correspondence for (A, Box) symmetric}, and the corresponding versions of Theorem~\ref{thm: EG-theorem} yield the following result.

\begin{theorem}
Suppose that $\sf X \in \{ D, T, K4, B, S4, S5 \}$.
\begin{enumerate}
\item The category $\mubal^{\sf X}$ is dually equivalent to $\KHK^{\sf X}$.
\item The categories $\mcubal^{\sf X}$ and $\ma^{\sf X}$ are dually equivalent to $\DF^{\sf X}$, and hence are equivalent.
\end{enumerate}
\end{theorem}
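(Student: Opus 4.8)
The plan is to obtain the theorem purely by restricting the dual equivalences already established to the full subcategories carved out by the property $\sf X$, using the correspondence Propositions~\ref{prop: correspondence for (A, Box) serial}--\ref{prop: correspondence for (A, Box) symmetric} as the bridge that translates each algebraic condition into the matching condition on the dual relation. Since every subcategory in sight is full and is defined by an isomorphism-invariant condition, I would invoke the general principle that a dual equivalence given by contravariant functors $F,G$ with $GF\cong 1$ and $FG\cong 1$ restricts to full, isomorphism-closed subcategories $\mathcal C'$ and $\mathcal D'$ as soon as $F$ carries $\mathcal C'$ into $\mathcal D'$ and $G$ carries $\mathcal D'$ into $\mathcal C'$. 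Thus the whole proof reduces to checking, for each $\sf X$, that the two functors respect the defining conditions.

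For part (1) I would start from the dual equivalence between $\mubal$ and $\KHK$ of Theorem~\ref{thm: mbal}. Given $(A,\Box)\in\mubal^{\sf X}$, the algebraic condition defining $\sf X$ holds, so the relevant Proposition among \ref{prop: correspondence for (A, Box) serial}--\ref{prop: correspondence for (A, Box) symmetric} yields that $R_\Box$ on $Y_A$ is serial, reflexive, transitive, or symmetric as appropriate; hence $(A,\Box)_*=(Y_A,R_\Box)\in\KHK^{\sf X}$. Conversely, given $(X,R)\in\KHK^{\sf X}$, I would apply the same Proposition to $(C(X),\Box_R)=(X,R)^*$: it states that the algebraic condition on $C(X)$ is equivalent to the corresponding relational property of $R_{\Box_R}$ on $Y_{C(X)}$. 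Because $\varepsilon_X$ is an isomorphism $(X,R)\cong(Y_{C(X)},R_{\Box_R})$ in $\KHK$ and the relational properties in question are preserved by such isomorphisms, $R_{\Box_R}$ inherits the property from $R$, whence $(C(X),\Box_R)\in\mubal^{\sf X}$. The cases $\sf S4$ and $\sf S5$ follow by intersecting the basic cases, since they are conjunctions of the basic conditions (here reflexivity forces $\Box 0=0$, so that the transitivity and symmetry conditions take their simplified form). This shows $(-)^*$ and $(-)_*$ restrict, giving the dual equivalence of part (1).

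For part (2) I would assemble three dualities over a common dual frame. First, the clean case of part (1): since clean objects of $\ubal$ dualize to Stone spaces (Remark~\ref{rem: cubal}), the dual equivalence restricts to one between $\mcubal$ and $\DF$, and the argument of part (1)—whose Propositions apply to all of $\mbal$, hence to $\mcubal$—refines this to a dual equivalence between $\mcubal^{\sf X}$ and $\DF^{\sf X}$. Second, the dual equivalence between $\ma^{\sf X}$ and $\DF^{\sf X}$ is the classical modal correspondence, i.e.\ the $\sf X$-versions of Theorem~\ref{thm: EG-theorem}, in which property $\sf X$ on a modal algebra matches seriality, reflexivity, transitivity, or symmetry of the dual descriptive frame. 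Third, the commuting diagram of Theorem~\ref{thm: clean} shows that for $(A,\Box)\in\mcubal$ the descriptive frame dual to $(\Id(A),\Box)$ is exactly $(Y_A,R_\Box)$; hence $(A,\Box)\in\mcubal^{\sf X}$ iff $(Y_A,R_\Box)\in\DF^{\sf X}$ iff $(\Id(A),\Box)\in\ma^{\sf X}$, so $\Id$ restricts to an equivalence between $\mcubal^{\sf X}$ and $\ma^{\sf X}$. Combining these yields the commuting triangle and all the stated (dual) equivalences of part (2).

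The point requiring the most care—rather than a genuine obstacle—is that property $\sf X$ is characterized on three different sides ($\ell$-algebraic in $\mcubal$, boolean-algebraic in $\ma$, and relational on frames), and one must confirm that these characterizations all single out the \emph{same} relational property on the shared Yosida/Stone dual. For the $\ell$-algebra side this is precisely the content of the correspondence Propositions, and for the modal-algebra side it is classical; the consistency of the two routes is guaranteed by the commutativity of the diagram in Theorem~\ref{thm: clean}. Everything else is the routine verification that full subcategories defined by isomorphism-invariant conditions inherit a restricted dual equivalence.
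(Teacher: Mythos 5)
Your proposal is correct and follows essentially the same route as the paper, whose proof is precisely the one-line assembly of Theorem~\ref{thm: mbal}, Theorem~\ref{thm: clean}, Propositions~\ref{prop: correspondence for (A, Box) serial}--\ref{prop: correspondence for (A, Box) symmetric}, and the $\sf X$-versions of Theorem~\ref{thm: EG-theorem}. You simply spell out the restriction-of-equivalence mechanics (fullness, isomorphism-invariance of the relational properties, and transfer along $\varepsilon_X$) that the paper leaves implicit.
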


\section{Concluding Remarks} \label{sec: conclusion}

We finish the paper with several remarks, which indicate a number of possible directions for future research.

\begin{remark} \label{rem: concluding remarks}
\begin{enumerate}
\item[]

\item As we pointed out in the Introduction, there are other dualities for $\KHaus$. For example, in pointfree topology we have Isbell duality \cite{Isb72} (see also \cite{BM80} or \cite[Sec.~III.1]{Joh82}) and de Vries duality \cite{deV62} (see also \cite{Bez10}). The two are closely related,
see \cite{Bez12}. Isbell and de Vries dualities were generalized to the setting of $\KHK$ in \cite{BBH15}. We plan to compare the results of \cite{BBH15} to the ones obtained in this paper.

\item Another relevant duality was established by Kakutani \cite{Kak40,Kak41}, the Krein brothers \cite{KK40}, and Yosida \cite{Yos41}, who also worked with continuous real-valued functions, but their signature was that of a vector lattice instead of an $\ell$-algebra.
Gelfand duality has a natural counterpart in this setting. Let $\bav$ be the category of bounded archimedean vector lattices and let $\ubav$ be its reflective subcategory consisting of uniformly complete objects. Then there is a dual adjunction between $\bav$ and $\KHaus$, which restricts to a dual equivalence between $\ubav$ and $\KHaus$. This duality is known as Yosida duality (or Kakutani-Krein-Yosida duality). In our axiomatization of $\mbal$ (see Definition~\ref{def:mbal}), the only axiom involving multiplication
is (M5). In the serial case (M5) simplifies to (M5${}'$) of Remark~\ref{rem: serial}, which only involves scalar multiplication. In the non-serial
case, (M5) can be replaced by the following two axioms
\begin{itemize}
\item $\Box(\lambda a) = \lambda\Box a + (1-\lambda) \Box 0$ provided $\lambda \ge 0$,
\item $\Box 0 \wedge (1-\Box a)^+ = 0$, 
\end{itemize}
which again only involve vector lattice operations. This yields the category $\mbav$ of modal bounded archimedean vector lattices and its reflective subcategory $\mubav$ consisting of uniformly complete objects. The results of
Section~\ref{sec: duality} then generalize to the setting of $\mbav$ and $\mubav$, and provide a generalization of Yosida duality.

\item Our definition of a modal operator on a bounded archimedean $\ell$-algebra can be further adjusted to the settings of $\ell$-rings,
$\ell$-groups, and MV-algebras.
In this regard, it would be interesting to develop logical systems corresponding to these algebras.

\item It would be natural to develop the correspondence theory for $\mbal$ by generalizing the results of Section~\ref{subsec: correspondence},
with the final goal towards a Sahlqvist type correspondence (see, e.g., \cite[Ch.~3]{BRV01}).
\end{enumerate}
\end{remark}

\def\cprime{$'$}
\providecommand{\bysame}{\leavevmode\hbox to3em{\hrulefill}\thinspace}
\providecommand{\MR}{\relax\ifhmode\unskip\space\fi MR }
\providecommand{\MRhref}[2]{%
  \href{http://www.ams.org/mathscinet-getitem?mr=#1}{#2}
}
\providecommand{\href}[2]{#2}

\bigskip

Department of Mathematical Sciences, New Mexico State University, Las Cruces NM 88003,
guram@nmsu.edu, lcarai@nmsu.edu, pmorandi@nmsu.edu

\end{document}